%-----------------------------------------------------------------------
% Beginning of article-template.tex
%-----------------------------------------------------------------------
%
%    This is a template file for proceedings articles prepared with AMS
%    author packages, for use with AMS-LaTeX.
%
%    Templates for various common text, math and figure elements are
%    given following the \end{document} line.
%
%%%%%%%%%%%%%%%%%%%%%%%%%%%%%%%%%%%%%%%%%%%%%%%%%%%%%%%%%%%%%%%%%%%%%%%%

%    Remove any commented or uncommented macros you do not use.

%    Replace amsproc by the name of the author package.
\documentclass{conm-p-l}

%    If you need symbols beyond the basic set, uncomment this command.
\usepackage{amssymb}

%    If your article includes graphics, uncomment this command.
\usepackage{graphicx}

%    If the article includes commutative diagrams, ...
\usepackage[cmtip,all]{xy}

%    Include other referenced packages here.
\usepackage{}

%    Update the information and uncomment if AMS is not the copyright
%    holder.
%\copyrightinfo{2009}{American Mathematical Society}

\newtheorem{theorem}{Theorem}[section]
\newtheorem{lemma}[theorem]{Lemma}
\newtheorem{prop}[theorem]{Proposition}
\newtheorem{corollary}[theorem]{Corollary}

\theoremstyle{definition}
\newtheorem{defn}[theorem]{Definition}

\theoremstyle{remark}
\newtheorem{remark}[theorem]{Remark}

\numberwithin{equation}{section}

\DeclareMathOperator{\Pic}{Pic}
\DeclareMathOperator{\Mod}{-Mod}
\newcommand{\Hom}{\mathrm{Hom}}
\newcommand{\Cok}{\mathrm{Cok}}
   \newcommand{\Ker}{\mathrm{Ker}}
   \newcommand{\Image}{\mathrm{Im}}
\newcommand{\Ext}{\mathrm{Ext}}

\newcommand{\red}{\mathrm{red}}
\newcommand{\St}{\mathrm{St}}
 
\newcommand{\lra}{\longrightarrow}
%% \newcommand{\lla}{\longleftarrow}
%% %% \newcommand{\lraepi}{\twoheadrightarrow}  
%% %% \newcommand{\lramono}{>\!\!\lra }      %% UGLY
%% \newcommand{\Lra}{\Longrightarrow}
   \newcommand{\llra}[1]{\stackrel{#1}{\longrightarrow}}
%% \newcommand{\llla}[1]{\stackrel{#1}{\longleftarrow}}
%% \newcommand{\lda}{\downarrow}
%% \newcommand{\mapsfrom}{\lla\!\!\! |}
%% 
%% %% Math symbols should have better names

\newcommand{\iso}{\;\cong\;}
\newcommand{\congruent}{\;\equiv\;}    %% eg a \congruent b \pmod{n}
\newcommand{\hequiv}{\;\simeq\;}       %% homotopy equivalence
\newcommand{\modmod}{/\!\!/ }

\newcommand{\Mdef}[2]{\newcommand{#1}{\relax \ifmmode #2 \else $#2$\fi}}
\Mdef{\cA}{\mathcal{A}}
\Mdef{\Z}{\mathbf{Z}}
\Mdef{\F}{\mathbf{F}}
\Mdef{\LL}{\mathbf{L}}

%%%%%%%%%%%%%%%%%%%%%%%%%%%%%%%%%%%%%%%%%%%%%%%%%%%%%%%%%%%%%%%%%%%%%%%%
%% Homebrew stuff
%% 
%% \usepackage[curve]{xypic}
%% \usepackage{fullpage}
%% \usepackage{epsfig}
%% %\usepackage{pb-diagram,lamsarrow,pb-lams}
%% \usepackage[mathcal]{euscript}
%% 
%% 
%% \newcommand{\p}{{\,\,\,\, + \,\,\,\,}}
%% \newcommand{\ppf}{{\noindent \bf Proof:}}
%% \newcommand{\ppfa}[1]{{\noindent \bf Proof {#1}:}}
   \newcommand{\qqed}{\qed \\[1ex]}
%% 
%% %% common environments
%% 
%% \newcommand{\be}{\begin{enumerate}}
%% \newcommand{\ee}{\end{enumerate}}
%% \newcommand{\ba}[1]{\begin{array}{#1}}
%% \newcommand{\ea}{\end{array}}
%% \newcommand{\bi}{\begin{itemize}}
%% \newcommand{\ei}{\end{itemize}}
%% 
%% %% Ker, coker, hom, ext, etc.
%% 
%% 
%% %% Arrows
%% 
%% 
%% \renewcommand{\tilde}{\widetilde}     %% the tilde is unusably small
%% \newcommand{\union}{\cup}
%% \newcommand{\normal}{\triangleleft}
%% \newcommand{\bc}[2]{\left( \begin{array}{c}#1\\#2\end{array} \right)}
%% 
\newcommand{\tensor}{\otimes}
   \newcommand{\smsh}{\wedge}        %% smash product
%% \newcommand{\wdg}{\;\vee\;}     %% coproduct in Top* (=wedge)
%% \newcommand{\wedgesub}[1]{\operatornamewithlimits{\underset{#1}{\bigvee}}}
%% \newcommand{\wedgesubsup}[2]
%%    {\operatornamewithlimits{\overset{#2}{\underset{#1}{\bigvee}}}}
\newcommand{\dirsum}{\oplus}
   \newcommand{\bigdirsum}{\bigoplus}
%% \newcommand{\dirsumsub}[1]{\operatornamewithlimits{\underset{#1}{\bigoplus}}}
%% \newcommand{\dirsumsubsup}[2]
%%    {\operatornamewithlimits{\overset{#2}{\underset{#1}{\bigoplus}}}}
%% \newcommand{\Sumsub}[1]{\operatornamewithlimits{\underset{#1}{\Sigma}}}
%% 
%% \newcommand{\genby}[1]{\, < \!\! #1 \!\! > \,}
\newcommand{\st}{\; | \; }
%% \newcommand{\modmod}{/\!\!/ }
%% 
%% \newcommand{\dual}[1]{{#1}^{\vee}}
%% 
%% \newcommand{\bdy}{\partial}
%% 
%% 
%% 
%% %% short names for variants of letters
%% 
%% \newcommand{\Mdef}[2]{\newcommand{#1}{\relax \ifmmode #2 \else $#2$\fi}}
%% 
%% \Mdef{\cA}{\mathcal{A}}
%% \Mdef{\cF}{\mathcal{F}}
\Mdef{\cC}{\mathcal{C}}
%% \Mdef{\cM}{\mathcal{M}}
%% \Mdef{\cP}{\mathcal{P}}
%% \Mdef{\cAll}{\mathcal{ALL}}
%% \Mdef{\RR}{\mathbf{R}}
   \Mdef{\RP}{{\mathbf{R}}{\mathrm P}}
%% \Mdef{\C}{\mathbf{C}}
%% \Mdef{\Z}{\mathbf{Z}}
%% \Mdef{\F}{\mathbf{F}}
%% \Mdef{\LL}{\mathbf{L}}
%% 
%% \newcommand{\cpm}{\C{P}_{-1}^\infty}
%% \newcommand{\Ftrc}{{\mathrm{F}}({\mathrm{trc}})}
%% \newcommand{\FL}{{\mathrm{F}}({\mathrm{L}})}
%% 

\begin{document}

\title{Idempotents, Localizations and  Picard groups of A(1)-modules}

\author{Robert R. Bruner}
\address{Department of Mathematics\\
	  Wayne State University\\
	  Detroit, Michigan  48202\\
	  USA
	 }
\email{rrb@math.wayne.edu}

\subjclass[2000]{ Primary: 19L41, 55N15, 55N20, 55S10;
Secondary: 18E35, 18G99, 16D70, 16E05, 16G10, 16W30.}

\begin{abstract}
We analyze the stable isomorphism type of polynomial rings on degree $1$
generators as modules over
the subalgebra $\cA(1) = \langle Sq^1, Sq^2\rangle$ of the mod $2$ Steenrod algebra.
Since their augmentation ideals are 
$Q_1$-local, we do this by studying the $Q_i$-local 
subcategories and the associated Margolis localizations.  
The periodicity exhibited by such modules 
reduces the calculation to one that is finite.
We show that these
are the only localizations which preserve tensor products, by first
computing
the Picard groups of these subcategories and using them to determine
all idempotents in the stable category of bounded-below $\cA(1)$-modules.
We show that the Picard groups
of the whole category are detected in the local Picard groups, and show
that every bounded-below $\cA(1)$-module is uniquely expressible
as an extension of a $Q_0$-local module by a $Q_1$-local module, up
to stable equivalence.  
\end{abstract}

\maketitle

\tableofcontents

\section{Introduction}

Let $H^*$ denote reduced mod $2$ cohomology.
We organize into a systematic framework the ideas that have been used
to analyze the $\cA(1)$-module structure of
$H^*BV_+ = \F_2[x_1,\ldots,x_n]$, 
where $V$ is an elementary abelian 2-group of
rank $n$.
As always, this splits into a direct sum of tensor powers of the
rank 1 case, $H^*BC_2$.  Remarkably,
as an $\cA(1)$-module, the 
tensor powers of $H^*BC_2$ are stably equivalent to their algebraic loops
(syzygies).
This is a general phenomenon:  if $I$ is a stably idempotent module
over a finite dimensional Hopf algebra, i.e., if
$I \tensor I \hequiv I$, then $\Omega^n I \hequiv (\Omega I)^{\tensor(n)}$:
\begin{align*}
(\Omega I)^{\tensor(n)} & = \Omega I \tensor \cdots \tensor \Omega I \\
& \hequiv  \Omega^n(I^{\tensor(n)}) \\
& \hequiv  \Omega^n I.
\end{align*}
Localizations provide a ready source of idempotents:  since $\F_2$ is 
tensor idempotent, its Margolis localizations $\LL_i \F_2$ are as well.
It happens that $\Sigma H^*BC_2 = \Omega \LL_1 \F_2$.

Our main results are as follows.

We call a bounded-below
module $Q_k$-local if its only non-zero Margolis homology is
with respect to $Q_k$ (Definition~\ref{Qklocaldef}).  If $M$ is $Q_0$-local then
$\Omega M \hequiv \Sigma M$, while if $M$ 
is $Q_1$-local then
$\Omega^4 M \hequiv \Sigma^{12} M$ (Theorems~\ref{onefold} and 
\ref{fourthloops}).  

We define modules $R$ and $P_0$ closely related to $ H^*BC_2$
and  observe that 
$R$ is $Q_0$-local and $P_0$ is $Q_1$-local.  We  show
there is a unique non-split triangle
\[
\Sigma R \llra{\epsilon} \F_2 \llra{\eta} P_0
\]
(Proposition~\ref{key-sequence}).  It follows that these are 
Margolis localizations: 
$\LL_0 \F_2 \hequiv \Sigma R$ and
$\LL_1 \F_2 \hequiv P_0$.  They are therefore idempotent, and,
as observed above, their tensor powers coincide with their
algebraic loops, which therefore exhibit one and four-fold periodicity,
respectively.
Since $\Omega P_0 \hequiv \Sigma H^*BC_2$,
the tensor powers of $H^*BC_2$ exhibit four-fold periodicity.
This reduces the analysis of all their tensor powers to four
cases, which we carry out explicitly in Section~\ref{PntoPone}.

We then deduce the basic properties of the localizations, including
the fact that the natural triangle
\[
\xymatrix{
\LL_0 M
\ar_{\epsilon_M}[r]
&
M
\ar_{\eta_M}[r]
&
\LL_1 M
\\
}
\]
(Definition~\ref{Lidefn}) is the unique triangle of the form
\[
\xymatrix{
M_0
\ar_{\epsilon}[r]
&
M
\ar_{\eta}[r]
&
M_1 
\\
}
\]
in which each $M_i$ is $Q_i$-local (Theorem~\ref{uniquetriangle}).
%% Thus $\cA(1)$-modules are classified by triples
%% $(M_0, M_1, e(M) \in \Ext^1_{\cA(1)}(M_1,M_0))$ modulo
%% the action of $\Aut(M_0) \times \Aut(M_1)$.

We next show that the localizations $\LL_i\F_2$ and their suspensions and loops
account for the whole Picard group of the $Q_i$-local subcategories (with
no local finiteness hypotheses needed).  We show that if $\Pic^{(i)}$ denotes the 
Picard group of the category of bounded-below $Q_i$-local modules, then
\[
\begin{array}{lcl}
\Pic^{(0)}(E(1)) = \Z 
&
\mathrm{\phantom{a bit of space}}
&
\Pic^{(0)}(\cA(1)) = \Z 
\\
\Pic^{(1)}(E(1)) = \Z 
&
\mathrm{\phantom{a bit of space}}
&
\Pic^{(1)}(\cA(1)) = \Z \dirsum \Z/(4) 
\\
\end{array}
\]
with the $\Z/(4)$ due to the four-fold periodicity of the loops of $P_0$
(Theorems~\ref{Yuslemma} and \ref{Yustheorem} and Propositions~\ref{PiczeroE}
and \ref{PiczeroA}).
Next we show that the global Picard group is detected in the local ones:
the localization map
\[
\Pic \lra \Pic^{(0)} \dirsum \Pic^{(1)}
\]
is a monomorphism (Section~\ref{sec:Pichom}).  

We then show that the only bounded-below stably idempotent
$\cA(1)$-modules are those we have already seen 
(Theorem~\ref{idempotents}) so that we have found all
localizations of the form $L(M) = I \tensor M$,
$I$  stably idempotent.

The last section in the main body of the paper observes that there is an
idempotent, the Laurent series ring $L$, that is neither bounded-below
nor bounded-above.  
It shows that the Margolis localizations are more fundamental than
the Margolis homology:  $L$ is $Q_1$-local in the generalized sense
that $L \hequiv \LL_1 L$ (and $\LL_0 L \hequiv 0$) despite having
trivial $Q_1$ and $Q_0$ homology.

Finally, 
in an appendix, we give precise 
form to the stable equivalences we have been studying, in
the expectation that these will be useful
in studying the `hit problem':  the study of
the $\cA$ and $\cA(n)$ indecomposables
in $H^*BV$.  (See \cite{Ault}, \cite{AultSinger} or \cite{WalkerWood},
for recent work on this problem.)

Since many of these results are modern versions of older results,
a brief summary of their development seems in order.
The algebraic loops (syzygies) of $H^*BC_2$
were explicitly identified in Margolis (\cite[Chap. 23]{Margolis}),
but had already been visible as early as the 1968 paper \cite{GMM} by
Gitler, Mahowald and Milgram, though the periodicity
was not stated there.
The relation to the tensor powers of $H^*BC_2$ 
was the discovery of Ossa (\cite{Ossa}).   He
showed that $P = H^*BC_2$ is stably idempotent as a
module over the subalgebra $E(1) = E[Q_0,Q_1]$ of the Steenrod algebra, and
used this to show that if $V$ is an elementary abelian group then,
modulo  Bott torsion, the connective complex K-theory of $BV_+$ is the
completion of the Rees ring of the representation ring $R(V)$ with
respect to its augmentation ideal.   (This is not how he said it, and
his main focus was on related topological  results, but this is one way
of phrasing the first theorem in~\cite{Ossa}.)  He tried to extend this
to real connective K-theory, but there were flaws in his argument.
By 1992, Stephan Stolz (private communication) knew that the correct statement for the
real case was that $P^{\tensor(n+1)}$ was the $n^{\mathrm th}$ syzygy of $P$
in the category of $\cA(1)$-modules.  
In his unpublished 1995 Notre Dame PhD thesis, Stolz's student
Cherng-Yih Yu (\cite{Yu}))
gave a proof of this  together with the remarkable fact that these
$\cA(1)$-modules form the Picard group of the category of 
bounded-below,
$Q_1$-local $\cA(1)$-modules.
As with Ossa's result in the complex case, this should lead to a representation
theoretic description of
the real connective K-theory of $BV_+$ modulo Bott torsion.  However, this was found
by other means
in the author's joint work with John Greenlees (\cite[p. 177]{kobg}).
More recently, Geoffrey Powell has given descriptions of 
the real and complex connective K-homology and cohomology of $BV_+$
in~\cite{Powellcx} and \cite{Powellreal}.
His functorial approach provides significant simplifications.  
Some of the results here are used
in his work on the real case.
Most recently, Shaun Ault has made use of the results here in his
study~\cite{Ault} of the hit problem.

The present account is essentially  self contained.  In particular, we give
dramatically simplified calculations of the Picard groups of the 
local subcategories.   
The work has evolved fitfully over the years since \cite{RRB-Ossa},
to which it provides context and additional detail,
receiving one impetus from my joint work with John Greenlees
(\cite{kubg} and \cite{kobg}), 
another from questions asked by Vic Snaith (which led to~\cite{VSetal}),
and a more recent one from discussions
with Geoffrey Powell in connection with  \cite{Powellreal}.
I am grateful to Geoffrey Powell for many useful discussions while
working out some of these results
and to the University of Paris 13 for the opportunity to work on this
in May of 2012.

\section{Recollections}

We begin with some basic definitions and results about modules over 
finite sub-Hopf algebras
of the mod $2$ Steenrod algebra, in order to state clearly the hypotheses under which 
they hold.
The reader who is familiar with $\cA(1)$-modules should probably 
skip to the next section.

Let $\cA(n)$ be the subalgebra of the mod 2 Steenrod algebra $\cA$
generated by $\{Sq^{2^i} \st 0 \leq i \leq n\}$.  Thus
$\cA(0)$ is exterior on one generator, $Sq^1$, and
$\cA(1)$, generated by $Sq^1$ and $Sq^2$, is $8$ dimensional.

Let $E(n)$ be the exterior subalgebra of $\cA$ generated by
the Milnor primitives $\{ Q_i \st 0 \leq i \leq n\}$.
(Recall that
$Q_0 = Sq^1$ and $Q_{n} = Sq^{2^n} Q_{n-1} + Q_{n-1} Sq^{2^n}.$)
$E(n)$ is a sub-Hopf algebra of $\cA(n)$.

%% By {\em module} we will mean  left module, and by {\em ideal}, left ideal.

For $B=E(1)$, $\cA(1)$, or any finite sub-Hopf algebra of $\cA$,
let $B\Mod$ be the category of all graded
$B$-modules.
The category $B\Mod$ is abelian, complete, cocomplete, has enough projectives
and injectives, and has a symmetric monoidal product $\tensor = \tensor_{\F_2}$.  
Since $B$ is a Frobenius algebra, free, projective and injective are equivalent
conditions  in $B\Mod$.
(See Margolis (\cite{Margolis}), Chapters 12, 13 and 15, and in particular
Lemma 15.27 for details.)

The best results hold in the abelian subcategory 
$B\Mod^{b}$ of bounded-below $B$-modules.
It has enough projectives and injectives (\cite[Lemma 15.27]{Margolis}).
A module in $B\Mod^b$  is free, projective, or injective 
there iff it is so in $B\Mod$ (\cite[Lemma 15.17]{Margolis}).  
%% The Krull-Schmidt property holds in $B\Mod^f$.

%% \begin{prop}
%% \label{krullschmidt}
%% (\cite[Theorem 11.21]{Margolis})
%% A module in  $B\Mod^f$ can be written as a direct sum of indecomposable modules,
%% uniquely up to order and isomorphism.
%% \end{prop}

Since the algebras $B$ we are considering are Poincare duality algebras,
the following decomposition result  holds without restriction on $M$.
It will be useful in our discussion of stable isomorphism.

\begin{prop}
\label{reduced}
(\cite[Proposition 13.13 and p. 203]{Margolis})
A module $M$ in $B\Mod$ has an expression 
\[
M \iso F \dirsum M^\red,
\]
unique up to isomorphism, where $F$ is free and $M^\red$ has no free summands.
\end{prop}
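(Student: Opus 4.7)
The plan is two-part: existence via a Zorn-style maximal-summand argument, and uniqueness via the Poincar\'e duality structure of $B$.

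\textbf{Existence.} Order the collection of free direct summands of $M$ by the relation ``$F \subseteq F'$ and $F$ is a direct summand of $F'$.'' For an ascending chain $\{F_\alpha\}$, each inclusion $F_\alpha \hookrightarrow F_\beta$ splits because $B$ is Frobenius and so free modules are injective in $B\Mod$; hence $F_\beta \iso F_\alpha \dirsum F'_{\alpha,\beta}$ with $F'_{\alpha,\beta}$ free, and iterating shows $F_\infty := \bigcup_\alpha F_\alpha$ is itself free. Injectivity of $F_\infty$ then splits its inclusion into $M$, exhibiting $F_\infty$ as an upper bound. Zorn's lemma yields a maximal free summand $F$, and any complement $M^\red$ is automatically reduced by maximality.

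\textbf{Uniqueness.} Let $w \in B$ be the top class (of degree $d$), so $\mathrm{soc}(B) = \F_2\{w\}$ is one-dimensional. The key claim is that $w \cdot M^\red = 0$ for any reduced module $M^\red$. Indeed, if $wm \neq 0$ for some $m \in M^\red$, consider $\phi \colon B \lra M^\red$ with $\phi(1) = m$; since $B$ has simple socle, every nonzero left ideal of $B$ meets, and hence contains, $\F_2\{w\}$, so $\Ker(\phi) \neq 0$ would force $wm = 0$, a contradiction. Thus $\phi$ is injective, and by injectivity of $B$ in $B\Mod$ the map $\phi$ splits onto a free summand of $M^\red$, contradicting reducedness.

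Given a decomposition $M \iso F \dirsum M^\red$, this yields $wM = wF = \mathrm{soc}(F)$, whose graded $\F_2$-dimension records the multiplicity of each suspension $\Sigma^j B$ in $F$. Since $wM$ depends only on $M$, the isomorphism type of $F$ is uniquely determined. Given two such decompositions $M \iso F \dirsum M^\red \iso F' \dirsum N^\red$, one has $F \iso F'$, and a cancellation argument (upgrading the abstract iso $F \iso F'$ to an automorphism of $M$ that carries $F$ onto $F'$, using that both are maximal free summands) gives $M^\red \iso N^\red$ as the common complement.

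\textbf{Main obstacle.} The crux is the identity $w \cdot M^\red = 0$ for reduced $M^\red$, which rests on the Poincar\'e duality fact that every nonzero left ideal of $B$ contains the socle, together with the self-injectivity of $B$. Both the Zorn existence step and the cancellation of the reduced complement are routine once that structural fact is in hand.
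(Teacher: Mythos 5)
Your existence argument and the key lemma $w\cdot M^\red = 0$ (where $w$ generates the socle of $B$) are both correct; the latter is precisely the Poincar\'e duality input, and your argument for it --- every nonzero left ideal of $B$ contains the simple socle, so an element of $M^\red$ not killed by $w$ generates a free summand --- is the right one. The genuine gap is the final step: you assert that a ``cancellation argument'' yields $M^\red \iso N^\red$ from $F \iso F'$, but cancellation of possibly infinitely generated direct summands fails over general rings, and the parenthetical plan (promote the abstract isomorphism $F\iso F'$ to an automorphism of $M$ using maximality) is neither carried out nor an obvious consequence of maximality.

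The clean fix reuses the socle. Given $M = F \dirsum M^\red = F' \dirsum N^\red$, restrict the projection $p'\colon M\lra F'$ (along $N^\red$) to $F$. Since $wM^\red = wN^\red = 0$, one has $wF = wM = wF' \subseteq F'$, and $p'$ is the identity on $wM$; thus $p'|_F\colon F\lra F'$ restricts to an isomorphism on socles. A degree-preserving map of free modules over the connected finite-dimensional algebra $B$ that is an isomorphism on socles is itself an isomorphism: its degree-zero matrix component is invertible, and after composing with the inverse of that component one is left with $1+N$ where $N$ strictly raises internal $B$-degree, so $N^{d+1}=0$ with $d$ the top degree of $B$. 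Hence $p'|_F$ is an isomorphism, $F$ provides a section of $p'$, and $M = F \dirsum \Ker(p') = F \dirsum N^\red$; therefore $N^\red \iso M/F \iso M^\red$. With this inserted your proof is complete and in the same Poincar\'e-duality spirit as the cited reference.
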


\begin{defn}
\label{reduceddef}
We call $M^\red$ the {\em reduced} part of $M$.
\end{defn}

Note that we are not asserting that $M \mapsto M^\red$ is a functor,
or that there are {\em natural} maps $M \lra M^\red$ or $M^\red \lra M$.

\begin{defn}
\label{defred}
If $\cC$ is a subcategory of $B\Mod$ which contains the projective modules,
the {\em stable module category of} $\cC$,
written $\St(\cC)$, is the category 
with the same objects as $\cC$ and  with
morphisms replaced by 
their
equivalence classes modulo those which factor through a projective module.  
Let us write $M \simeq N$ to denote {\em stable isomorphism}, isomorphism
in $\St(B\Mod)$, and reserve
$M \iso N$ for isomorphism in $B\Mod$.
\end{defn}

Over a finite Hopf algebra like $B$, stable isomorphism simplifies.

\begin{prop}
\label{stableisosum}
(\cite[Proposition 14.1]{Margolis})
In $B\Mod$, modules $M$ and $N$ are stably isomorphic iff there exist free modules 
$P$ and $Q$ such that $M \dirsum P \iso  N \dirsum Q$.
\end{prop}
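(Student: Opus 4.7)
The reverse implication is immediate: $B$ is Frobenius, so free modules are projective and hence stably zero, and thus $M \dirsum P \iso N \dirsum Q$ with $P,Q$ free gives $M \hequiv M \dirsum P \iso N \dirsum Q \hequiv N$.

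For the forward direction, the plan is to enlarge a given stable equivalence $f : M \to N$ to an honest monomorphism $\phi$ and then split off the cokernel. Since $B$ is Frobenius, free $=$ injective in $B\Mod$, so one can choose an injection $i : M \hookrightarrow F$ with $F$ free and set $\phi := (f,i) : M \to N \dirsum F$. Then $\phi$ is injective; the coordinate $i$ factors through the free module $F$, and the projection $N \dirsum F \to N$ is itself a stable equivalence, so $\phi$ remains a stable isomorphism. Let $\psi : N \dirsum F \to M$ be a stable inverse of $\phi$, and write $\psi\phi + 1_M = ba$ with $a : M \to P$, $b : P \to M$, $P$ free. Since $P$ is injective and $\phi$ is monic, $a$ extends to $\tilde a : N \dirsum F \to P$ with $\tilde a \phi = a$, and $\tilde\psi := \psi + b\tilde a$ then satisfies $\tilde\psi\phi = 1_M$. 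The short exact sequence
\[
0 \lra M \llra{\phi} N \dirsum F \lra C \lra 0
\]
therefore splits, giving $N \dirsum F \iso M \dirsum C$ with $C = \Cok \phi$.

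It remains to prove that $C$ is free, for then $M \dirsum C \iso N \dirsum F$ is the required isomorphism with both complements free. I expect this to be the main obstacle, since the naive cancellation ``$M \dirsum C \hequiv M$ implies $C \hequiv 0$'' is not generally valid in an additive category. The cleanest route is to invoke the fact that $\St(B\Mod)$ is triangulated, with short exact sequences as distinguished triangles; then $C$ is the cofiber of the stable isomorphism $\phi$ and hence stably zero. This means $1_C$ factors through a projective, making $C$ a retract of a projective, hence projective, hence free by the Frobenius property and Proposition~\ref{reduced}. A more hands-on alternative uses the explicit data $(\phi,\tilde\psi)$ together with the fact that $\phi\tilde\psi + 1_{N \dirsum F}$ factors through a free (because $\tilde\psi$ differs from the stable inverse $\psi$ only by a map through the free module $P$) to exhibit $1_C$ directly as a composite through a free module, again forcing $C$ to be free.
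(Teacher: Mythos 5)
Your proof is correct, and it follows the standard line of argument for this statement. Note, however, that the paper itself offers no proof here: Proposition~\ref{stableisosum} is simply a citation of \cite[Proposition 14.1]{Margolis}, so there is nothing in the text to compare against. Evaluating your argument on its own merits: the reverse direction is exactly right (free $\Rightarrow$ projective $\Rightarrow$ stably zero). For the forward direction, the key moves are all sound: you use enough injectives (free $=$ injective over the Frobenius algebra $B$) to replace a stable isomorphism $f$ by an honest monomorphism $\phi = (f,i) : M \to N \dirsum F$ that is still a stable isomorphism; you correctly use injectivity of $P$ to upgrade the stable retraction $\psi$ to a genuine retraction $\tilde\psi$ of $\phi$; and you correctly identify that the remaining work is showing $C = \Cok\phi$ is free. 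Both of your routes to that conclusion work: the triangulated-category argument (cofiber of an isomorphism is zero, and a module whose identity factors through a projective is a summand of a projective, hence projective, hence free) and the hands-on computation with the idempotent $1_{N\dirsum F} + \phi\tilde\psi$, which factors through a projective because $\phi\psi + 1$ does and because $\phi b\tilde a$ factors through $P$. One small stylistic remark: you could avoid invoking the triangulated structure entirely, since the hands-on idempotent argument is self-contained and relies only on the Frobenius/local facts (free $=$ projective $=$ injective, retract of projective is projective) already recorded in the Recollections; this keeps the proof at the same level of machinery as the surrounding section.
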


In $B\Mod^b$, stable isomorphism simplifies further.

\begin{prop}
\label{stableisored}
(\cite[Proposition 14.11]{Margolis}
Let $M$ and $N$ be modules in $B\Mod^b$.  
\begin{enumerate}
\item $M \simeq N$ iff $M^\red \iso N^\red$.
\item $f : M \lra N$ is a stable equivalence iff
\[
\xymatrix{
M^\red\,\,
\ar@{>->}[r]
&
M 
\ar^{f}[r]
&
N 
\ar@{->>}[r]
&
N^\red
}
\]
is an isomorphism in $B\Mod$.
\end{enumerate}
\end{prop}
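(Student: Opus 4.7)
The plan is to derive both parts directly from Propositions~\ref{reduced} and~\ref{stableisosum}, together with the fact that in $B\Mod^b$ the classes of free, projective and injective modules coincide. For part (1), if $M \simeq N$ then by Proposition~\ref{stableisosum} there exist free modules $P$ and $Q$ with $M \dirsum P \iso N \dirsum Q$. Substituting $M \iso F_M \dirsum M^\red$ and $N \iso F_N \dirsum N^\red$ and regrouping, both sides display a (free) $\dirsum$ (reduced) decomposition, and the uniqueness clause of Proposition~\ref{reduced} forces $M^\red \iso N^\red$. Conversely, $M^\red \iso N^\red$ yields $M \dirsum F_N \iso F_M \dirsum M^\red \dirsum F_N \iso F_M \dirsum N^\red \dirsum F_N \iso F_M \dirsum N$, so Proposition~\ref{stableisosum} gives $M \simeq N$.

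For part (2), I would fix decompositions and write $f$ as a block matrix
\[
f = \begin{pmatrix} a & b \\ c & d \end{pmatrix} : F_M \dirsum M^\red \lra F_N \dirsum N^\red,
\]
identifying the composite in the statement with the corner entry $d: M^\red \lra N^\red$. Since each of $a, b, c$ has source or target in a free module, the difference $f - \iota_N d \pi_M$ factors through the free module $F_M \dirsum F_N$, where $\iota_N$ and $\pi_M$ are the evident inclusion and projection from the chosen decompositions. Moreover, $\iota_N$ and $\pi_M$ are themselves stable equivalences (their complementary summands are free and hence stably zero), so $f$ is a stable equivalence if and only if $d$ is. The backward direction of (2) is then immediate: any isomorphism is a stable equivalence.

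For the forward direction of (2), it remains to prove that a stable equivalence $d : X \lra Y$ between bounded-below reduced modules is already an isomorphism. Given a stable inverse $e$, one has $ed - 1_X = \beta \alpha$ with $\alpha : X \lra F$, $\beta : F \lra X$, $F$ free. Restricting to $K = \Ker d$ forces $\iota_K = -\beta \alpha \iota_K$, so the inclusion of $K$ into $X$ factors through $F$. Because $B$ is Frobenius and we work in $B\Mod^b$, $F$ is also injective, and a short diagram chase shows that $K$ is free and splits off $X$ as a direct summand; since $X$ has no free summands, $K = 0$. A symmetric argument using $de - 1_Y$ shows $\Cok(d) = 0$, and therefore $d$ is an isomorphism. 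The main obstacle is this last step --- bootstrapping from ``$\iota_K$ factors through a free module'' to ``$K$ splits off as a free summand'' --- which is exactly where the Frobenius property of $B$ together with the bounded-below hypothesis (ensuring projective, injective and free coincide) are essential.
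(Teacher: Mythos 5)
The paper does not actually prove this proposition --- it is a citation of Margolis's Proposition~14.11 --- so there is no ``paper proof'' to compare against; I will simply assess your argument on its own terms.

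Your proof of part~(1) is correct: the forward direction follows from the uniqueness clause of Proposition~\ref{reduced} after rewriting $M \dirsum P \iso N \dirsum Q$ in free--plus--reduced form, and the converse follows by a direct application of Proposition~\ref{stableisosum}. The block-matrix reduction at the start of part~(2) is also fine: $f - \iota_N d\,\pi_M$ factors through $F_M \dirsum F_N$, and $\iota_N$, $\pi_M$ are stable equivalences (their complements are free, so $\iota_{M^\red}\pi_M - 1_M$ and the analogous map for $N$ factor through free modules), so $f$ is a stable equivalence iff $d$ is. The backward direction of part~(2) is then trivial.

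The gap is in the final step, and it is genuine. You assert that ``$\iota_K$ factors through the free (hence injective) module $F$, and a short diagram chase shows that $K$ is free and splits off $X$ as a direct summand,'' invoking reducedness of $X$ only afterwards to get $K = 0$. As a free-standing implication (before reducedness is used), this is \emph{false}: take $X = B$ and $K = \mathrm{soc}(B)$. Then $\iota_K \colon K \hookrightarrow B$ visibly factors through the free module $B$ itself (via $\iota_K$ followed by the identity), yet $K \iso \Sigma^d\F_2$ is not a free summand of $B$ --- indeed $B$ is indecomposable. What you actually have is weaker: $u = \alpha\iota_K \colon K \to F$ is a monomorphism into an injective, but a monomorphism into an injective does not split unless the \emph{source} is injective, which is exactly what you are trying to prove. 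Extending $u$ along $\iota_K$ to $\tilde{u}\colon X \to F$ gives an endomorphism $v\tilde{u}\colon X \to X$ that restricts to the identity on $K$ and is stably the identity on $X$, but turning this into a retraction of $X$ onto $K$ is precisely the hard part. The clean proofs I know either form the pushout of $\iota_K$ and $u$ to realize $X$ as a summand of $F \dirsum X/K$ and then invoke a Krull--Schmidt or Hilbert-series comparison (which requires finite type, a hypothesis absent here), or else go through the structure theory of bounded-below modules developed in Margolis's Chapters~13--14. Either way, the conclusion ``a stable equivalence between bounded-below reduced modules is already an isomorphism'' is true but is itself essentially the content of the cited proposition, not a lemma one obtains by a short chase. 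You should either prove this lemma carefully --- being explicit about where reducedness enters and what permits the splitting --- or cite it, as the paper does.
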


Here, 
$\xymatrix{
M^\red \,\,
\ar@{>->}[r]
&
M}$
and
$\xymatrix{
N 
\ar@{->>}[r]
&
N^\red
}
$
are any maps which are part of
a splitting of $M$ and $N$, respectively, into a free summand and a reduced summand.

The preceding result holds for all finite Hopf algebras.
For modules over subalgebras $B$ of the mod $2$ Steenrod algebra,
the theorem of Adams and Margolis (\cite{Ad-Mar} or \cite[Theorem 19.6]{Margolis})
gives us a simple criterion for stable isomorphism in $B\Mod^b$.  
Recall that the Milnor primitives $Q_i$
satisfy $Q_i^2 = 0$, so that we may define $H(M,Q_i) = \Ker(Q_i)/\Image(Q_i)$.

\begin{theorem}
\label{stableiso}
Let $B=\cA(1)$ or $E(1)$.  Suppose that
 $f : M \lra N$ in $ B\Mod^b$.   
If $f$ induces isomorphisms $f_*:H(M,Q_i) \lra H(N,Q_i)$ for
$i =0$ and $i=1$, then $f$ is  a stable isomorphism.
\end{theorem}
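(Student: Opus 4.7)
The plan is to reduce the statement to the Margolis freeness criterion from \cite[Theorem 19.6]{Margolis}: a bounded-below $B$-module $C$ with $H(C, Q_0) = H(C, Q_1) = 0$ is free. First I would reduce to the case where both $M$ and $N$ are reduced. By Proposition~\ref{stableisored}(2), $f$ is a stable equivalence iff the composite $M^\red \hookrightarrow M \stackrel{f}{\lra} N \twoheadrightarrow N^\red$ is an isomorphism in $B\Mod$. Free modules have trivial Margolis homology (since $H(B, Q_i) = 0$ for $i = 0,1$), so both the inclusion of and projection onto the reduced summands induce isomorphisms on $H(-, Q_i)$. Hence the hypothesis on $f_*$ descends to the reduced parts, and it suffices to show that a map $f : M \lra N$ of \emph{reduced} bounded-below $B$-modules inducing isomorphisms on $H(-, Q_0)$ and $H(-, Q_1)$ is an isomorphism (equivalently, a stable equivalence).

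Next, I would fit $f$ into a short exact sequence and use the long exact sequence of Margolis homology. Pick an injection $\iota: M \hookrightarrow F$ into a free module, which exists because $B\Mod^b$ has enough injectives and injective coincides with free for the Frobenius algebra $B$. Then $(f, \iota) : M \lra N \dirsum F$ is injective, giving
\[
0 \lra M \llra{(f,\iota)} N \dirsum F \lra C \lra 0
\]
in $B\Mod^b$. Since $Q_i^2 = 0$, each term is a chain complex under $Q_i$, and the standard long exact sequence in homology for a short exact sequence of chain complexes applies. Using $H(F, Q_i) = 0$ and the hypothesis that $f_*$ is an isomorphism, I would conclude $H(C, Q_0) = H(C, Q_1) = 0$.

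Applying the Margolis freeness criterion, $C$ is free, so the sequence splits: $N \dirsum F \iso M \dirsum C$. The projection $N \dirsum F \lra N$ off the free summand is a stable equivalence, so composing it with $(f, \iota)$ -- which is a stable isomorphism because its cokernel is free -- recovers $f : M \lra N$ up to stable equivalence. Hence $f$ is a stable equivalence, as desired. The main obstacle is the Margolis freeness criterion itself: it depends on the specific representation-theoretic structure of $\cA(1)$ and $E(1)$ and fails for generic finite-dimensional Hopf algebras. Everything else in the argument is formal, using only the Frobenius property, uniqueness of the reduced decomposition, and the long exact sequence arising from $Q_i^2 = 0$.
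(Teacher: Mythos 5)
The paper does not prove this theorem at all: it is stated in the ``Recollections'' section and attributed directly to Adams--Margolis, with a citation to \cite{Ad-Mar} and \cite[Theorem 19.6]{Margolis}. There is therefore no ``paper's proof'' to compare against, but your argument is the standard reduction from the relative statement to the absolute (freeness) statement, and it is correct as far as it goes. The cone construction $0 \to M \to N \oplus F \to C \to 0$ (with $F$ free and $\iota$ an embedding guaranteed because $B$ is Frobenius and $B\Mod^b$ has enough injectives), combined with the long exact sequence in each $Q_i$-homology and $H(F,Q_i)=0$, correctly shows $H(C,Q_i)=0$; then the freeness of $C$ gives a splitting $N \oplus F \cong M \oplus C$, and both $(f,\iota)$ and the projection off $F$ are stable equivalences whose composite is $f$. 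The initial reduction to reduced modules via Proposition~\ref{stableisored} is harmless but unnecessary, since the cone argument works directly at the stable level for arbitrary bounded-below modules. You are also right to flag that the entire weight rests on the freeness criterion ``bounded-below with vanishing $Q_0$- and $Q_1$-Margolis homology implies free,'' which is the genuine theorem of Adams and Margolis and is specific to sub-Hopf algebras of the Steenrod algebra; since you take it as given (exactly as the paper does), there is no gap, just a deferral to the same external input the paper relies on. Note that Remark~\ref{stableisofail} in the paper (the Laurent series ring) shows this criterion really does require bounded-below, so the hypothesis enters essentially in your appeal to it.
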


In particular, if a bounded-below module $M$ has trivial $Q_0$ and $Q_1$ homology, 
then the map
$0 \lra M$ is a stable equivalence, and therefore $M$ is free.

\begin{remark}
\label{stableisofail}
The hypothesis that the modules
be bounded-below is needed for Theorem~\ref{stableiso} to hold:  
the Laurent series ring $\F_2[x,x^{-1}]$ is not free
over $E(1)$ or $\cA(1)$,
yet has trivial $Q_0$ and $Q_1$ homology.
\end{remark}

Margolis (\cite[Theorem 19.6.(b)]{Margolis}) gives a similar characterization
of stable isomorphism or modules over
any sub-Hopf algebra $B$ of the mod $2$ Steenrod algebra.

Finally, we consider the algebraic loops functor.  By Schanuel's Lemma, letting
$\Omega M$ be the kernel of an epimorphism from a projective module to $M$
gives a well defined module up to stable isomorphism.  To get functoriality,
the following definition is simplest.

\begin{defn}
\label{defomega}
Let $I = \Ker(B \lra \F_2)$ be the augmentation ideal of $B$.  Let
$\Omega M = I \tensor M$.
\end{defn}

Note that $ \Omega \F_2 \iso I $.
Similarly, we may define the inverse loops functor.

\begin{defn}
\label{Loopinverse}
Let $I^{-1} = \Cok(\F_2 \lra \Sigma^{-d} B))$ be the 
cokernel of 
the $d^{{th}}$ desuspension of the 
the inclusion of the socle
into $B$.  ($d$  is $4$ if $B=E(1)$, $6$ if $B=\cA(1)$.)   Let
$\Omega^{-1} M = I^{-1} \tensor M$.
\end{defn}

To see that the notation makes sense, recall
the `untwisting' isomorphism 
\[
\theta : B \tensor M \lra B \tensor \widehat{M},
\]
given by  $\theta(b\tensor m) = \sum b' \tensor b'' m$.
Here $B \tensor \widehat{M}$ is the free $B$-module on the underlying vector
space $\widehat{M}$ of $M$ and $\psi(b) = \sum b' \tensor b''$ is the
coproduct of $b$.
The inverse, $\theta^{-1}(b \tensor m) = \sum b' \tensor \chi(b'')m$,
where $\chi$ is the conjugation (antipode) of $B$.
This shows that tensoring with a free module gives a free module.

{\em In particular, tensor product is well defined in the stable module
category.}

Tensoring the short exact sequence $0 \lra I \lra B \lra \F_2 \lra 0$
with $I^{-1}$ shows that $I \tensor I^{-1}$ is
stably equivalent to $\F_2$.  

\begin{corollary}
We have stable
equivalences $\Omega \Omega^{-1} \simeq {\mathrm{Id}} \simeq \Omega^{-1}\Omega$. 
In general,  $\Omega^k\Omega^l \simeq \Omega^{k+l}$
for all integers $k$ and $l$.
\end{corollary}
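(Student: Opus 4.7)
The plan is to deduce the whole statement from the identity $I \otimes I^{-1} \simeq \F_2$ highlighted in the paragraph immediately preceding the corollary, together with the fact (also noted there, via the untwisting isomorphism) that tensoring with any module is well defined on stable equivalence classes, since tensoring a free module with anything again yields a free module.

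First I would establish the shape of the iterates. By induction on $n \geq 0$, the definition $\Omega M = I \otimes M$ gives $\Omega^n M \iso I^{\otimes n} \otimes M$, and similarly $\Omega^{-n} M \iso (I^{-1})^{\otimes n} \otimes M$, with the convention $I^{\otimes 0} = (I^{-1})^{\otimes 0} = \F_2$. The first part of the corollary, $\Omega \Omega^{-1} \simeq \mathrm{Id} \simeq \Omega^{-1} \Omega$, is then immediate: for any $M$,
\[
\Omega \Omega^{-1} M \;=\; I \otimes I^{-1} \otimes M \;\simeq\; \F_2 \otimes M \;=\; M,
\]
using that $I \otimes I^{-1} \simeq \F_2$ and that tensoring with $M$ respects $\simeq$. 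The reverse composite is handled identically by symmetry of $\otimes$.

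For the general claim $\Omega^k \Omega^l \simeq \Omega^{k+l}$, I would split into cases on the signs of $k$ and $l$. If $k,l \geq 0$ (respectively both $\leq 0$), associativity of $\otimes$ gives literal equality $\Omega^k \Omega^l M \iso I^{\otimes(k+l)} \otimes M = \Omega^{k+l} M$ (respectively with $I^{-1}$ in place of $I$). The interesting case is mixed signs, say $k = m > 0$ and $l = -n < 0$. Then
\[
\Omega^m \Omega^{-n} M \;\iso\; I^{\otimes m} \otimes (I^{-1})^{\otimes n} \otimes M,
\]
and I would apply the basic equivalence $I \otimes I^{-1} \simeq \F_2$ a total of $\min(m,n)$ times, each application legitimate because $\otimes$ preserves stable equivalence. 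This cancels pairs until we are left with $I^{\otimes(m-n)} \otimes M$ if $m \geq n$, or $(I^{-1})^{\otimes(n-m)} \otimes M$ if $n \geq m$; in either case the result is stably equivalent to $\Omega^{m+l} M = \Omega^{m-n} M$.

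The only point requiring any care, and arguably the main (minor) obstacle, is justifying that one may cancel an interior $I \otimes I^{-1}$ factor inside a larger tensor product: this is precisely where the well-definedness of $\otimes$ on $\St(B\mathrm{-Mod})$ is used, and it follows from the untwisting isomorphism $\theta$ recalled just above the corollary. Once that is in hand, the argument is simply bookkeeping with exponents, and the symmetric case $k<0$, $l>0$ is handled identically.
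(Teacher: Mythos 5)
Your argument is correct and is exactly the route the paper intends: the corollary is stated without proof as an immediate consequence of the two facts recalled just before it, namely $I \otimes I^{-1} \simeq \F_2$ and the well-definedness of $\otimes$ on the stable category via the untwisting isomorphism. You have simply filled in the routine bookkeeping with exponents and signs, which is what the paper leaves to the reader.
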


Finally, we should note that the stable module category is triangulated.  For any
short exact sequence of modules
\[
0 \lra M_1 \lra M_2 \lra M_3 \lra 0
\]
there is an extension cocycle $\Omega M_3 \lra M_1$ (or equivalently
$M_3 \lra \Omega^{-1} M_1$) representing the extension class in
$\Ext_B^1(M_3,M_1)$.  The triangles in the stable module category
are the sequences
\[
\Omega M_3 \lra M_1 \lra M_2 \lra M_3
\]
and
\[
 M_1 \lra M_2 \lra M_3 \lra  \Omega^{-1} M_1.
\]
for the short exact sequences
\[
0 \lra M_1 \lra M_2 \lra M_3 \lra 0.
\]

\section{Periodicity}

We start by observing the periodicities which {\em local} $B$-modules obey,
for $B=E(1)$ or $\cA(1)$.
We shall restrict attention to the category $B\Mod^b$ of bounded-below
$B$-modules.

\begin{defn}  Let $B$ be either $E(1)$ or $\cA(1)$.
\label{Qklocaldef}
Call a $B$-module {\em $Q_k$-local} if $H(M,Q_i) = 0$ for $i \neq k$.
For $k\in\{0,1\}$, let $B\Mod^{(k)}$ be the full subcategory  of $B\Mod^b$
containing the $Q_k$-local modules.
\end{defn}

\begin{theorem}
\label{onefold}
If $M \in B\Mod^{(0)}$ then $\Omega M \hequiv \Sigma M$.
\end{theorem}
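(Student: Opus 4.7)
The approach is to find a short exact sequence
\[
0 \lra N \lra I \lra \Sigma\F_2 \lra 0
\]
of $B$-modules, where $I = \Omega\F_2$ is the augmentation ideal, and then show that $N\tensor M$ is stably trivial whenever $M$ is $Q_0$-local.  Tensoring this sequence with $M$ yields a short exact sequence whose outer terms are $\Omega M$ and $\Sigma M$, and the stable triviality of $N\tensor M$ collapses the resulting triangle in the stable category to the desired equivalence $\Omega M \hequiv \Sigma M$.

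The quotient map $\phi \colon I \to \Sigma\F_2$ is essentially forced: since $\Sigma\F_2$ is concentrated in degree $1$ with trivial $B$-action, $\phi$ must send $Sq^1 \in I$ to the generator and vanish on the higher-degree part.  Module linearity is automatic because $Sq^k\cdot x$ lies in degree $\geq 2$ for every $k\geq 1$ and $x\in I$, so $\phi(Sq^k\cdot x) = 0 = Sq^k\cdot\phi(x)$ trivially.  Set $N = \Ker\phi$.  Tensoring with $M$ (exact since we work over a field) gives
\[
0 \lra N\tensor M \lra \Omega M \lra \Sigma M \lra 0,
\]
and by Theorem~\ref{stableiso} it suffices to verify $H(N\tensor M, Q_i) = 0$ for $i=0,1$.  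The Künneth formula for Margolis homology reduces this to $H(N,Q_i)\tensor H(M,Q_i) = 0$: the $Q_1$ case is immediate from the $Q_0$-local hypothesis $H(M,Q_1)=0$, and for $Q_0$ I would first compute $H(I,Q_0) = \Sigma\F_2$ (the $Q_0$-action pairs all basis elements above degree $1$ into cycle/boundary matches, leaving only the class of $Sq^1$), observe that $\phi$ sends this class to the generator of $H(\Sigma\F_2,Q_0)$, and conclude $H(N,Q_0)=0$ from exactness of the long exact sequence in Margolis $Q_0$-homology associated to the short exact sequence defining $N$.

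The main subtlety is that the argument must cover both $B=E(1)$ and $B=\cA(1)$.  For $E(1)$ one can explicitly recognize $N \iso \Sigma^3 E(0)$, so $H(N,Q_0)=0$ is transparent from $E(0)$ being free over itself; for $\cA(1)$ the module $N$ is six-dimensional and the $Q_0$-action on it involves several Adem relations, so any direct verification of $H(N,Q_0)=0$ there is the real bookkeeping hurdle.  The long exact sequence route sketched above is attractive precisely because it needs only the cleaner computation of $H(I,Q_0)$ and the elementary form of $\phi$, and thus runs uniformly in both cases.
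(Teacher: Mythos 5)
Your proof is correct, and it is essentially a rotation of the triangle the paper uses.  The paper's proof tensors $M$ with the short exact sequence $0 \lra \Sigma\F_2 \lra \cA(0) \lra \F_2 \lra 0$ and observes that the middle term $\cA(0)\tensor M$ is free, by the K\"unneth formula for Margolis homology and Theorem~\ref{stableiso}.  Your kernel $N = \Ker(\Omega\F_2 \to \Sigma\F_2)$ is stably equivalent to $\Omega\cA(0)$: in the case $B = E(1)$ it is the ideal generated by $Q_1$, and in the case $B = \cA(1)$ it is the ideal generated by $Sq^2$, and in each case this is exactly the kernel of the projective cover $B \twoheadrightarrow \cA(0)$.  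Thus your short exact sequence and the paper's are rotations of a single triangle $\Omega\cA(0) \lra \Omega\F_2 \lra \Sigma\F_2 \lra \cA(0)$, and showing that $N \tensor M$ is free is the same (up to one loop) as showing that $\cA(0)\tensor M$ is free.  The paper's version is a little more economical: $\cA(0)$ is free as a $Q_0$-module and $Q_1$ acts trivially on it, so its Margolis homology is visible at a glance, with no long-exact-sequence computation of $H(N,Q_0)$ needed; this is exactly the bookkeeping your last paragraph flags as the real hurdle of your route.  But the two arguments reduce to the same K\"unneth computation and both are sound.
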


\begin{proof}
Evidently, $\cA(0)$ has a unique $B$-module (even, $\cA$-module) structure
compatible with its structure as a module over itself.
Tensor $M$ with the  short exact sequence of $B$-modules
\[
0 \lra \Sigma \F_2 \lra \cA(0) \lra \F_2 \lra 0.
\]
We obtain
\[
0 \lra \Sigma M \lra M \tensor \cA(0) \lra M \lra 0.
\]
By Theorem~\ref{stableiso} and the K\"unneth isomorphism for $Q_i$ homology, 
the module in the middle is free and the result follows.
\end{proof}

The $Q_1$-local case requires a bit of preparation.
Recall the notation $A\modmod B$ for the $A$-module $A\tensor_B \F_2$
when $B$ is a sub-(Hopf-)algebra of $A$.

\begin{defn}
\label{fourfold}
Define modules $F_i$ 
and maps $f_i : F_{i+1} \lra F_i$ 
for $i \in \Z$ by
$F_{i+4} = \Sigma^{12} F_i$,
$f_{i+4} = \Sigma^{12} f_i$,
$f_3 = Sq^2 Sq^3$ and the following:\\[1ex]
\scalebox{.82}{
$
\xymatrix@R=18pt{
0
&
\F_2
\ar@{=}[d]
\ar[l]
&
F_0
\ar[l]
\ar@{=}[d]
&
F_1
\ar_{f_0}[l]
\ar@{=}[d]
&
F_2
\ar_{f_1}[l]
\ar@{=}[d]
&
F_3
\ar_{f_2}[l]
\ar@{=}[d]
&
\Sigma^{12}\F_2
\ar@{=}[d]
\ar[l]
&
0
\ar[l]
\\
0
&
\F_2
\ar[l]
&
\cA(1)\modmod\cA(0)
\ar[l]
&
\Sigma^2 \cA(1)
\ar_(.4){Sq^2}[l]
&
\Sigma^4 \cA(1)
\ar_(.45){Sq^2}[l]
&
\Sigma^7 \cA(1)\modmod\cA(0)
\ar_(.55){Sq^3}[l]
&
\Sigma^{12}\F_2
\ar_(.4){Sq^2Sq^3}[l]
&
0
\ar[l]
\\
}
$
} % end scalebox
\end{defn}

The following is an elementary calculation, originally due to Toda~\cite{Toda}.
The diagram in the proof of Proposition~\ref{descrMi} is sufficient to
prove it.

\begin{prop}
\label{fourfoldperiodicity}
The sequence in Definition~\ref{fourfold}
is exact.
\qqed
\end{prop}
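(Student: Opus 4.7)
My plan is to verify exactness position by position, using the admissible monomial basis of $\cA(1)$ and the induced basis on $\cA(1)\modmod\cA(0)$. Each $F_i$ has dimension at most $8$ and lies in a narrow range of internal degrees, so everything reduces to finite bookkeeping. I would first fix the basis $\{1, Sq^1, Sq^2, Sq^3, Sq^2Sq^1, Sq^3Sq^1, Sq^2Sq^3, Sq^2Sq^3Sq^1\}$ of $\cA(1)$, recording the non-obvious Adem identity $Sq^3Sq^3 = Sq^5Sq^1 = Sq^2Sq^3Sq^1$ (which follows from $Sq^2Sq^3 = Sq^5 + Sq^4Sq^1$ in $\cA$). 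The right ideal $\cA(1)\cdot Sq^1$ is then spanned by $\{Sq^1, Sq^2Sq^1, Sq^3Sq^1, Sq^2Sq^3Sq^1\}$, so $\cA(1)\modmod\cA(0)$ has basis $\{1, Sq^2, Sq^3, Sq^2Sq^3\}$ in degrees $0, 2, 3, 5$, and similarly for $F_3$.

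I would then verify that the sequence is a chain complex by evaluating each composite on the top-cell generator: $Sq^2\cdot Sq^2 = Sq^3 Sq^1$ maps to zero in $F_0$ since $Sq^3Sq^1 \in \cA(1)\cdot Sq^1$; $Sq^3\cdot Sq^2 = 0$ already in $\cA$; and $(Sq^2Sq^3)\cdot Sq^3 = Sq^2\cdot Sq^2Sq^3Sq^1 = 0$ by a short Adem reduction. For exactness itself I would compute the principal left ideal $\cA(1)\cdot Sq^2 = \F_2\{Sq^2, Sq^3, Sq^3Sq^1, Sq^2Sq^3, Sq^2Sq^3Sq^1\}$ and the right annihilator of $Sq^2$ in $\cA(1)$, namely $\F_2\{Sq^3, Sq^2Sq^3, Sq^2Sq^3Sq^1\}$; passing to the quotient $\cA(1)\modmod\cA(0)$ yields the analogous data needed to analyze $f_0$. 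Matching $\Ker$ and $\Image$ at each of the six interior positions is then a straightforward dimension comparison in each internal degree.

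The delicate check is exactness at $F_3$: writing $g_3$ for the canonical generator of $F_3 = \Sigma^7 \cA(1)\modmod\cA(0)$, it depends on $Sq^3 Sq^3$ being \emph{non-zero} in $\cA(1)$, equal to the top class $Sq^2Sq^3Sq^1$, so that $Sq^3 g_3$ does not lie in $\Ker(f_2)$; consequently $\Ker(f_2) = \F_2\{Sq^2Sq^3\, g_3\}$ in degree $12$, which matches $\Image(f_3)$. The main obstacle is simply keeping the Adem bookkeeping straight; the whole picture fits on a single diagram of cells and $Sq^1, Sq^2$ operations (as in the forthcoming Proposition~\ref{descrMi}), after which the ranks and exactness are visible by inspection.
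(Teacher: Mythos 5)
Your proposal is correct and follows essentially the same route the paper takes: the paper declines to spell out the argument, citing Toda and pointing to the cell diagram in the proof of Proposition~\ref{descrMi}, which is exactly the "single diagram of cells and $Sq^1, Sq^2$ operations" you invoke at the end. Your explicit Adem-relation bookkeeping (identifying $\cA(1)\cdot Sq^1$, $\cA(1)\cdot Sq^2$, the right annihilator of $Sq^2$, and the crucial relation $Sq^3Sq^3 = Sq^2Sq^3Sq^1$ for exactness at $F_3$) is just the algebraic transcription of what that diagram shows by inspection; the only quibble is that $\cA(1)\cdot Sq^1$ is a \emph{left} ideal, not a right ideal, as you correctly say of $\cA(1)\cdot Sq^2$ a sentence later.
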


Splicing this sequence and its suspensions, we obtain 
a complete (i.e., Tate) resolution of $\F_2$ by modules tensored up from $\cA(0)$:
the $F_{4i}$ and $F_{4i+3}$ are suspensions of $\cA(1)\tensor_{\cA(0)}\F_2$, while
the $F_{4i+1}$ and $F_{4i+2}$ are suspensions of $\cA(1)\tensor_{\cA(0)}\cA(0)$.

\[
\xymatrix@R=8pt{
\cdots
&
F_{-2}
\ar_{f_{-3}}[l]
&
F_{-1}
\ar_{f_{-2}}[l]
&
&
F_{0}
\ar_{f_{-1}}[ll]
\ar[ld]
&
F_1
\ar_{f_0}[l]
&
F_2
\ar_{f_1}[l]
&
F_3
\ar_{f_2}[l]
&
\cdots
\ar_{f_3}[l]
\\
&&&
\F_2
\ar[lu]
\\
}
\]

The cokernels in this sequence will play an important role.  They are the syzygies
of $\F_2$ with respect to the relative projective class of projectives relative
to the $\cA(0)$-split exact sequences.

\begin{defn}
\label{defMi}
Let $M_i = \Sigma^{-i} \Cok{f_i} $.
\end{defn}

We have inserted the suspension here to make later calculations run more
smoothly.
It is a simple matter to describe the $M_i$.

\begin{prop}
\label{descrMi}
For each $i\in \Z$, $M_{i+4} = \Sigma^{8} M_i$,
so the following suffice to determine all the $M_i$:
\begin{itemize}
\item $M_0 =  \F_2$,
\item $M_1 = \Sigma \cA(1)/(Sq^2)$,
\item $M_2 = \Sigma^2 \cA(1)/(Sq^3)$,
\item $M_3 = \Sigma^4 \cA(1)/(Sq^1,Sq^2Sq^3)$.
\end{itemize}
\end{prop}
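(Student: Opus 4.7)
The plan is to read off the four descriptions directly from Definition~\ref{fourfold} and to observe that the $4$-periodicity is immediate from the periodicity built into the resolution. Since $f_{i+4} = \Sigma^{12} f_i$, we have $\Cok(f_{i+4}) = \Sigma^{12} \Cok(f_i)$, so
\[
M_{i+4} = \Sigma^{-(i+4)} \Cok(f_{i+4}) = \Sigma^{-i+8} \Cok(f_i) = \Sigma^{8} M_i.
\]
It therefore suffices to compute $M_0, M_1, M_2, M_3$ one at a time.

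For $i = 1$ and $i = 2$ the map $f_i$ has free domain $\Sigma^k \cA(1)$, and the generator is sent to $Sq^2$ (respectively $Sq^3$) times the generator of the free codomain. So $\Cok(f_1) = \Sigma^2 \cA(1)/(Sq^2)$ and $\Cok(f_2) = \Sigma^4 \cA(1)/(Sq^3)$, which after desuspension give $M_1 = \Sigma \cA(1)/(Sq^2)$ and $M_2 = \Sigma^2 \cA(1)/(Sq^3)$. For $i = 0$, the map $f_0 : \Sigma^2 \cA(1) \to \cA(1)\modmod\cA(0)$ sends the generator to $[Sq^2]$; a short Adem-relation calculation shows that the cyclic submodule $\cA(1) \cdot [Sq^2]$ equals $\{[Sq^2],\,Sq^1[Sq^2] = [Sq^3],\,Sq^2[Sq^3] = [Sq^2 Sq^3]\}$, which exhausts every positive-degree class of $\cA(1)\modmod\cA(0)$. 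Hence $\Cok(f_0) = \F_2$ and $M_0 = \F_2$.

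The one step that needs a little care is $M_3$. The domain of $f_3 : \Sigma^{12} \F_2 \to \Sigma^7 \cA(1)\modmod\cA(0)$ is a trivial module, so the image is the one-dimensional submodule $\F_2 \cdot [Sq^2 Sq^3]$, provided $Sq^1$ and $Sq^2$ annihilate $[Sq^2 Sq^3]$ in the codomain. Both identities are immediate from Adem: $Sq^1 Sq^2 Sq^3 = Sq^3 Sq^3 + Sq^2 Sq^1 Sq^3 = Sq^5 Sq^1 \in \cA(1) Sq^1$, and $Sq^2 Sq^2 Sq^3 = Sq^3 Sq^1 Sq^3 = 0$. To identify the resulting cokernel, I would use the description $\cA(1)\modmod\cA(0) = \cA(1)/(Sq^1)$ as left $\cA(1)$-modules and note that quotienting further by the cyclic submodule generated by the class of $Sq^2 Sq^3$ produces
\[
\Cok(f_3) \iso \Sigma^7 \cA(1)/\bigl(\cA(1) Sq^1 + \cA(1) Sq^2 Sq^3\bigr) = \Sigma^7 \cA(1)/(Sq^1, Sq^2 Sq^3).
\]
Desuspending by $\Sigma^3$ then gives $M_3 = \Sigma^4 \cA(1)/(Sq^1, Sq^2 Sq^3)$. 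The main obstacle is verifying that the image of $f_3$ in $\cA(1)\modmod\cA(0)$ really is just the top cell; once that check is in hand, everything else is a routine identification of nested quotients of $\cA(1)$.
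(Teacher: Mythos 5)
Your argument is correct and arrives at the same modules, but by an algebraic route rather than the paper's. The paper's proof consists of a single cell diagram displaying the $F_i$, with the $\Sigma^i M_i$ marked as the ``open dots,'' so one reads off the $M_i$ by inspection; you instead compute the cokernels directly as quotients of $\cA(1)$ or $\cA(1)\modmod\cA(0)$ by explicit left ideals. These are the same computation in different clothes, and the algebraic version has the advantage of not depending on the reader correctly parsing a large picture. Two small slips are worth flagging, though neither affects the conclusion. First, the domain of $f_2$ is $F_3 = \Sigma^7\cA(1)\modmod\cA(0)$, which is not free (it is only four-dimensional), so your phrase ``free domain $\Sigma^k\cA(1)$'' is wrong for $i=2$; your identification $\Cok(f_2) = \Sigma^4\cA(1)/(Sq^3)$ survives because all you actually used is that $F_3$ is cyclic, so the image of $f_2$ is the left ideal generated by $Sq^3$ in the free codomain. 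Second, in checking $Sq^1[Sq^2Sq^3]=0$ in $\cA(1)\modmod\cA(0)$ you pass from $Sq^1Sq^2Sq^3 = Sq^5Sq^1$ to ``$\in \cA(1)Sq^1$''; this is true but not immediate, since $Sq^5 \notin \cA(1)$. One way to see it: $Sq^2Sq^3 = Sq^5 + Sq^4Sq^1$, so $Sq^2Sq^3Sq^1 = Sq^5Sq^1$, which exhibits the top class of $\cA(1)$ as an element of the left ideal $\cA(1)Sq^1$.
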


\begin{proof}
The following diagram exhibits the $\Sigma^i M_i$ by open dots in the diagram
of $F_i$, or as solid dots in the diagram of $F_{i-1}$.\\[1ex]
\scalebox{.82}{
$
\xymatrix@R=3pt@C=2pt{
{\mathrm{\phantom{der}}} -5 & &&
\circ
\ar@{-}@/^.5pc/[dd]
\\
\\
{\mathrm{\phantom{der}}}
-3 &&&
\circ
\ar@{-}[d]
\\
{\mathrm{\phantom{der}}}
-2 &&&
\circ
\ar@{-}@/^.5pc/[dd]
\\
\\
{\mathrm{\phantom{degre}}}
0 &&&
\bullet
&&&&&&&&
\circ
\ar[llllllll]
\ar@{-}@/^.5pc/[dd]
\\
\\
{\mathrm{\phantom{deger}}}
2 &&&
&&&&&&&& \bullet
\ar@{-}[d]
&&&&&&&&& \circ
\ar[lllllllll]
\ar@{-}[d]
\ar@{-}@/^.5pc/[dd]
\\
{\mathrm{\phantom{deger}}}
3 &&&
&&&&&&&& \bullet
\ar@{-}@/^.5pc/[dd]
&&&&&&&&& \circ
\ar@{-}@/_.5pc/[ddl]
\\
{\mathrm{\phantom{deger}}}
4 &&&
&&&&&&&& 
&&&&&&&&& \bullet
\ar@{-}@/^.5pc/[dd]
\ar@{-}[dr]
&&&&&&&&&
\circ
\ar@{-}@/^.5pc/[dd]
\ar@{-}[d]
\ar[lllllllll]
\\
{\mathrm{\phantom{deger}}}
5 &&&
&&&&&&&& \bullet
&&&&&&&& \circ
\ar@{-}[dr]
&& \bullet
\ar@{-}@/^.5pc/[ddl]
&&&&&&&& \circ
\ar@{-}@/_.5pc/[ddl]
\\
{\mathrm{\phantom{deger}}}
6 &&&
&&&&&&&& &&&&&&&&& 
\bullet
\ar@{-}@/_.5pc/[dd]
&&&&&&&&&
\circ
\ar@{-}@/^.5pc/[dd]
\ar@{-}[dr]
\\
{\mathrm{\phantom{deger}}}
7 &&&
&&&&&&&& &&&&&&&&&
\bullet
\ar@{-}[d]
&&&&&&&&
\circ
\ar@{-}[dr]
&&
\bullet
\ar@{-}@/^.5pc/[ddl]
&&&&&&&
\circ
\ar@{-}@/^.5pc/[dd]
\ar[lllllll]
\\
{\mathrm{\phantom{deger}}}
8 &&&
&&&&&&&& &&&&&&&&& 
\bullet
&&&&&&&&&
\circ
\ar@{-}@/_.5pc/[dd]
\\
{\mathrm{\phantom{deger}}}
9 &&&
&&&&&&&& &&&&&&&&& &&&&&&&&&
\bullet
\ar@{-}[d]
&&&&&&&&
\circ
\ar@{-}[d]
\\
{\mathrm{\phantom{degr}}}
10 &&&
&&&&&&&& &&&&&&&&& &&&&&&&&&
\bullet
&&&&&&&&
\circ
\ar@{-}@/^.5pc/[dd]
\\
\\
{\mathrm{\phantom{degr}}}12 &&&
&&&&&&&& &&&&&&&&& &&&&&&&&& &&&&&&&&
\bullet
\\
{\mathrm{degree}}
 &&&
F_{-1}
&&&&&&&& 
F_0
\ar_{f_{-1}}[llllllll]
&&&&&&&&& 
F_1
\ar_{f_0}[lllllllll]
&&&&&&&&& 
F_2
\ar_{f_1}[lllllllll]
&&&&&&&&
F_3
\ar_{f_2}[llllllll]
}
$
} % end scalebox

\end{proof}

\begin{theorem}
\label{fourthloops}
If $M \in \cA(1)\Mod^{(1)}$, then $\Omega^i M \hequiv M_i \tensor M$.
In particular, $\Omega^{i+4} M \hequiv \Sigma^{12} \Omega^i M$.
\end{theorem}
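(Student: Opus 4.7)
The plan is to build an $\cA(1)$-free resolution of $M$ out of the Tate resolution $F_\bullet \to \F_2$ of Definition~\ref{fourfold} by tensoring with $M$, then identify $\Omega^i M$ with $M_i \tensor M$ by reading off the $i$-th syzygy. The four-fold periodicity will then be an immediate consequence of $F_{i+4} = \Sigma^{12} F_i$.

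The principal obstacle is showing that $F_i \tensor M$ is $\cA(1)$-free whenever $M \in \cA(1)\Mod^{(1)}$. Two ingredients combine. First, because $M$ is bounded-below with $H(M,Q_0) = 0$, its restriction to $\cA(0) = E[Q_0]$ is free: any bounded-below $\cA(0)$-module decomposes as a direct sum of copies of $\F_2$ and $\cA(0)$, and only the $\F_2$ summands contribute to $Q_0$-homology, so vanishing $Q_0$-homology forces freeness. Second, each $F_i$ has the form $\cA(1) \tensor_{\cA(0)} N$ for some (possibly suspended) $\cA(0)$-module $N$, as one reads directly from the four cases in Definition~\ref{fourfold}. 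The projection formula (the relative version of the untwisting isomorphism recalled in Section~2)
\[
(\cA(1) \tensor_{\cA(0)} N) \tensor M \iso \cA(1) \tensor_{\cA(0)} (N \tensor M)
\]
then identifies $F_i \tensor M$ with an $\cA(1)$-module induced up from the $\cA(0)$-free module $N \tensor M$, whence $F_i \tensor M$ is $\cA(1)$-free.

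With this freeness in hand, splicing the Tate resolution yields short exact sequences
\[
0 \lra \Sigma^{i+1} M_{i+1} \lra F_i \lra \Sigma^i M_i \lra 0,
\]
arising from the exactness-derived identifications $\Image(f_i) = \Ker(f_{i-1}) \iso \Cok(f_{i+1})$. Tensoring with $M$ produces a short exact sequence with $\cA(1)$-free middle term, so the boundary map realizes a stable equivalence between the outer terms, compatible with the looping functor. An induction on $|i|$, based at $M_0 = \F_2$ (so $\Omega^0 M = M = M_0 \tensor M$) and run in both directions using $\Omega^{-1}$ for $i < 0$, then yields $\Omega^i M \hequiv M_i \tensor M$ for every $i \in \Z$. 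The periodicity $\Omega^{i+4} M \hequiv \Sigma^{12} \Omega^i M$ follows immediately from $F_{i+4} = \Sigma^{12} F_i$. The only delicate part of the argument will be the bookkeeping of the suspension shifts that accumulate across the inductive step, driven by the $\Sigma^{-i}$ factor built into Definition~\ref{defMi}.
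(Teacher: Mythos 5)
Your overall architecture is the same as the paper's, but you establish freeness of $F_i \tensor M$ by a genuinely different route, and this is the crux of the theorem, so it is worth comparing. The paper's proof is two sentences: $H(F_i,Q_1)=0$ and $H(M,Q_0)=0$, so the K\"unneth isomorphism for Margolis homology kills both $Q$-homologies of $F_i\tensor M$, and the Adams--Margolis criterion (Theorem~\ref{stableiso}) then yields freeness. You instead argue structurally: $M$ is $\cA(0)$-free because it is bounded-below with $H(M,Q_0)=0$, each $F_i$ is induced up from $\cA(0)$, and the projection formula $(\cA(1)\tensor_{\cA(0)}N)\tensor M \iso \cA(1)\tensor_{\cA(0)}(N\tensor M)$ exhibits $F_i\tensor M$ as induced from an $\cA(0)$-free module, hence $\cA(1)$-free. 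Both routes depend crucially on bounded-below; yours avoids the Adams--Margolis theorem (needing only the structure theory of bounded-below $E[Q_0]$-modules and Frobenius reciprocity) and produces an honest isomorphism rather than a stable one, while the paper's is shorter by invoking a bigger hammer. The splicing of the resolution and the induction are then the same in both arguments.

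One caveat about the ``suspension bookkeeping'' you defer to the end: carried out, your induction from the short exact sequences $0\to\Sigma^{i+1}M_{i+1}\to F_i\to\Sigma^i M_i\to 0$ yields $\Omega^i M \hequiv \Sigma^i M_i\tensor M$, with a $\Sigma^i$ that the printed statement of Theorem~\ref{fourthloops} drops. You can check against Theorem~\ref{firstgeneral} that the $\Sigma^i$ is genuinely there: $\Omega P_0 \hequiv \Sigma P_1$, whereas tensoring $0\to M_1\to P_1\to\Sigma^4 R\to 0$ with $P_0$ gives $M_1\tensor P_0 \hequiv P_1$. That $\Sigma^i$ is exactly what the normalization $M_i=\Sigma^{-i}\Cok(f_i)$ in Definition~\ref{defMi} introduces, and it is required for the periodicity conclusion: $M_{i+4}=\Sigma^8 M_i$ contributes only $\Sigma^8$, and the remaining $\Sigma^4$ comes from the extra $\Sigma^{i+4}$ versus $\Sigma^i$, giving the advertised $\Omega^{i+4}M\hequiv\Sigma^{12}\Omega^i M$.
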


\begin{proof}
The modules $F_i$ have no $Q_1$ homology, while $M$ has only $Q_1$ homology.
Therefore, the $F_i \tensor M$ are $\cA(1)$-free by the K\"unneth isomorphism and
Theorem~\ref{stableiso}.  Since $M_0 \tensor M \cong M$,
the result follows from exactness of the sequence of $F_i \tensor M$.
\end{proof}

%% \begin{remark}
%% The four term sequence from $\F_2$ to $\Sigma^{12} \F_2$
%% derives from the first four nontrivial stages in the Postnikov
%% tower of $ku$ (or any spectrum with mod 2 cohomology
%% $\cA \modmod \cA(1)$).  Precisely, the cohomology of the first four steps in this
%% Postnikov tower is obtained by tensoring the sequence we used up from
%% $\cA(1)\Mod$ to $\cA\Mod$.
%% \end{remark}

\section{Reduction from $P_1^{\tensor(n)}$ to $\Omega^n P_1$}
\label{PntoPone}

In this section we introduce the $Q_i$-localizations of $\F_2$ and
determine some of their basic properties.  As a corollary, we will
obtain the stable isomorphism type of $H^*BV$ for elementary abelian
2-groups $V$.

%% We turn to the specific $\cA(1)$-modules we will consider.
%% For the homology of projective space, we will 
%% show that the sequence of syzygies coincides with the sequence of tensor 
%% powers up to stable equivalence.  
%% While their stable equivalence classes can be obtained by tensoring with
%% the $M_i$ of the previous section, 
%% the minimal representatives are
%% extensions of the $M_i$ by suspensions of a $Q_0$-local module $R$.
%% Yu's theorem (Section~\ref{sec:Pic}), asserts that these
%% and their suspensions account for all the modules in the Picard group of 
%% $\cA(1)\Mod^{(1)}$.

\begin{defn}
Let $P_1 = H^*BC_2 = (x)$, the ideal generated by $x$ in
$H^*{BC_2}_+ = \F_2[x]$.  
Let $P_0$ be 
the submodule of the Laurent series ring $L = \F_2[x,x^{-1}]$
which is nonzero in degrees  $-1$ and higher.
Let $R$ be the quotient of the unique inclusion $\eta : \F_2 \lra P_0$.
Let $\epsilon : \Sigma R \lra \F_2$ be the unique non-zero homomorphism.
\end{defn}

We represent $P_0$, $P_1$ and $R$ diagrammatically by showing the action of $Sq^1$
and $Sq^2$:

\noindent
\scalebox{.82}{
$
\xymatrix{
{\mathrm{degree}}
&
-1
&
0
&
1
&
2
&
3
&
4
&
5
&
6
&
7
&
8
&
\ldots
\\
P_0:
&
\bullet 
\ar[r]
\ar@/^1pc/[rr]
&
\bullet
&
\bullet 
\ar[r]
&
\bullet 
\ar@/^1pc/[rr]
&
\bullet 
\ar[r]
\ar@/_1pc/[rr]
&
\bullet
&
\bullet 
\ar[r]
&
\bullet
\ar@/_1pc/[rr]
&
\bullet
\ar[r]
\ar@/^1pc/@{.>}[rr]
&
\bullet
&
\ldots
\\
R:
&
\bullet 
\ar@/^1pc/[rr]
&
&
\bullet 
\ar[r]
&
\bullet 
\ar@/^1pc/[rr]
&
\bullet 
\ar[r]
\ar@/_1pc/[rr]
&
\bullet
&
\bullet 
\ar[r]
&
\bullet
\ar@/_1pc/[rr]
&
\bullet
\ar[r]
\ar@/^1pc/@{.>}[rr]
&
\bullet
&
\ldots
\\
P_1:
&&&
\bullet 
\ar[r]
&
\bullet 
\ar@/^1pc/[rr]
&
\bullet 
\ar[r]
\ar@/_1pc/[rr]
&
\bullet
&
\bullet 
\ar[r]
&
\bullet
\ar@/_1pc/[rr]
&
\bullet
\ar[r]
\ar@/^1pc/@{.>}[rr]
&
\bullet
&
\ldots
\\
}
$
}  % end scalebox
\smallskip

We record some obvious facts using the results of the preceding section.

\begin{prop}
\label{key-sequence}
The following hold.
\begin{itemize}
\item The module $R$ is $Q_0$-local, and the
 modules $P_0$ and $P_1$ are $Q_1$-local.
\item There are short exact sequences
\[
0 \lra \Sigma P_1 \lra \Sigma R \llra{\epsilon} \F_2 \lra 0
\]
and
\[
0 \lra \F_2 \llra{\eta} P_0 \lra R \lra 0.
\]

\item $\epsilon $ is the extension cocycle for the second of these exact
sequences, giving a triangle
\[
\Sigma R \llra{\epsilon} \F_2 \llra{\eta} P_0
\]
in $\St(\cA(1)\Mod)$.
\end{itemize}
\end{prop}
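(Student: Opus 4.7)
The plan is to verify the three claims in sequence. The first bullet reduces to direct Margolis homology calculations using the formula $Sq^k(x^n) = \binom{n}{k}\,x^{n+k}$, valid for all $n \in \Z$, which specializes to $Sq^1 x^n \congruent n\,x^{n+1}$ and $Q_1 x^n \congruent n\,x^{n+3}$ modulo $2$. Reading off kernels and images on $P_1$, $P_0$ and $R$ gives $H(P_1,Q_0) = H(P_0,Q_0) = 0$ and $H(R,Q_1) = 0$, while $H(R,Q_0)$, $H(P_1,Q_1)$ and $H(P_0,Q_1)$ are each one-dimensional, generated respectively by $[x^{-1}]$, $x^2$ and $1$.

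For the short exact sequences, the second is the definition of $R$. For the first, note that $\Sigma R$ and $\Sigma P_1$ agree in all positive degrees and differ only in that $\Sigma R$ carries an extra generator in degree $0$, the image of $x^{-1}$. Because $Sq^1 x^{-1} = 1$ vanishes in $R$, the classes of $\Sigma R$ in degrees $\geq 2$ form an $\cA(1)$-submodule, which inspection of the diagrams above identifies with $\Sigma P_1$; the quotient is then $\F_2$ in degree $0$, and the quotient map must coincide with $\epsilon$ by the uniqueness in its definition.

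For the third bullet, the triangulation recipe at the end of Section~2 turns $0 \lra \F_2 \lra P_0 \lra R \lra 0$ into a triangle $\Omega R \lra \F_2 \lra P_0 \lra R$ whose connecting map $\zeta$ represents the extension class in $\Hom_{\St(\cA(1)\Mod)}(\Omega R, \F_2) \iso \Ext^1_{\cA(1)}(R, \F_2)$. Since $R$ is $Q_0$-local, Theorem~\ref{onefold} supplies a stable equivalence $\Omega R \hequiv \Sigma R$, converting $\zeta$ into a stable class $\Sigma R \lra \F_2$. The claim is that the group of such stable maps is $\F_2 \cdot [\epsilon]$ and that $\zeta$ is nonzero in it, which forces $\zeta = \epsilon$ in the stable category and yields the desired triangle.

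Both sub-claims are quick. Any $\cA(1)$-linear map $\Sigma R \lra \F_2$ is determined by the image of its degree-zero generator and so equals $0$ or $\epsilon$; a stable vanishing of $\epsilon$ would require a factorization through a free module, which reduces by degree to a factorization through $\cA(1)$ itself, impossible because $Sq^1$ kills the degree-zero generator of $\Sigma R$ but not the unit of $\cA(1)$. And $\zeta$ is stably nonzero, for otherwise $P_0 \hequiv \F_2 \dirsum R$, contradicting $H(P_0, Q_1) \neq 0 = H(\F_2 \dirsum R, Q_1)$. The main obstacle---matching the extension class with $\epsilon$---is thus dispatched by the one-dimensionality of the stable Hom, once Theorem~\ref{onefold} has been used to convert $\Omega R$ into $\Sigma R$.
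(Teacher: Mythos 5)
Your proof of the first two bullets is fine, and for the third bullet you take a genuinely different route from the paper. The paper constructs the triangle explicitly: it sets $F = R \tensor \cA(0)$ (which is $\cA(1)$-free, as in the proof of Theorem~\ref{onefold}), observes that $F$ sits in $0 \to \Sigma R \to F \to R \to 0$, lifts the epimorphism $F \to R$ through $P_0 \to R$, and reads off the induced map $\Sigma R \to \F_2$ on kernels as $\epsilon$. Your argument instead identifies the connecting map abstractly, by pinning down the one-dimensional stable $\Hom$ group $[\Sigma R, \F_2]_{\St} = \F_2 \cdot [\epsilon]$ and then showing the extension class is nonzero. The paper's version is more hands-on and immediately produces the maps in the triangle; yours is shorter once one is comfortable with the stable formalism, and both are legitimate.

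However, there is a computational slip in your final step. You assert that $H(\F_2 \dirsum R, Q_1) = 0$, but $\F_2$ is concentrated in degree $0$ and $Q_1$ acts trivially on it, so $H(\F_2, Q_1) = \F_2$, whence $H(\F_2 \dirsum R, Q_1) = \F_2$, which is the same as $H(P_0, Q_1)$. So $Q_1$-homology does \emph{not} distinguish $P_0$ from $\F_2 \dirsum R$. The contradiction you want comes instead from $Q_0$-homology: $H(P_0, Q_0) = 0$, while $H(\F_2 \dirsum R, Q_0) = H(\F_2, Q_0) \dirsum H(R, Q_0) \iso \F_2 \dirsum \F_2$ (in degrees $0$ and $-1$, respectively), which is nonzero. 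Equivalently, $P_0$ is $Q_1$-local by the first bullet, but $\F_2 \dirsum R$ is not, since $R$ contributes nontrivial $Q_0$-homology. Once you replace the $Q_1$ computation with the $Q_0$ one, the rest of your argument is sound.
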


\begin{proof}
All but the last item are clear from inspection.
If we let $F = R \tensor \cA(0)$, then, as in the proof of Theorem~\ref{onefold},
$F$ is $\cA(1)$-free and lies in a short exact sequence
$0\lra \Sigma R \lra F \lra R \lra 0$.
The epimorphism $F \lra R$ lifts to $P_0$,
yielding a diagram
\[
\xymatrix{
0 
\ar[r]
&
 \F_2 
\ar^{\eta}[r]
&
P_0 
\ar[r] 
&
R 
\ar[r]
&
 0
\\
0
\ar[r]
&
\Sigma R
\ar^{\epsilon}[u]
\ar[r]
&
F
\ar[r]
\ar[u]
&
R
\ar[r]
\ar@{=}[u]
&
0
}
\]
\end{proof}

We will see in Section~\ref{seclocalization} that
the map $\eta$ is the $Q_1$-localization of $\F_2$, with corresponding
$Q_1$-nullification $\epsilon$.  Dually, $\epsilon$ is the $Q_0$-colocalization
of $\F_2$ with corresponding $Q_0$-conullification $\eta$.  
As noted in the introduction, it follows that if  $I=P_0$ or $I=\Sigma R$ 
then $I$ is
idempotent, and that therefore $\Omega^n I \hequiv (\Omega I)^{\tensor(n)}$.
This underlies the argument which we now use
 to produce  minimal representatives for the tensor powers of 
$H^*BC_2$.

\begin{theorem}
\label{firstgeneral}
If $M \in \cA(1)\Mod^{(1)}$ then 
$\Omega M \hequiv \Sigma P_1 \tensor M$
and
$\eta  \tensor 1 $ is a stable equivalence
$ M \llra{\hequiv} P_0 \tensor M$.
  In particular, for $n \geq 1$,
 $P_1^{\tensor(n)}  \hequiv \Sigma^{-n} \Omega^{n} P_0$.
If $M \in \cA(1)\Mod^{(0)}$ then 
$\epsilon  \tensor 1 $ is a stable equivalence
$ \Sigma R  \tensor M \llra{\hequiv}  M$.
\end{theorem}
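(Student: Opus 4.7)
The plan is to derive everything from a single observation: if $A$ and $B$ are bounded-below $\cA(1)$-modules with $A$ being $Q_0$-local and $B$ being $Q_1$-local, then $A\tensor B$ is stably trivial (i.e., free). This follows immediately from the K\"unneth formula for Margolis homology---$H(A\tensor B, Q_k) \iso H(A, Q_k)\tensor H(B, Q_k)$ vanishes for both $k=0$ and $k=1$---together with the Adams--Margolis criterion (Theorem~\ref{stableiso}). This is the only substantive input; the rest is formal manipulation of the triangles produced by Proposition~\ref{key-sequence}.

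Suppose first that $M\in\cA(1)\Mod^{(1)}$. Tensoring the triangle
\[
\Sigma R \llra{\epsilon} \F_2 \llra{\eta} P_0
\]
with $M$ yields a triangle
\[
\Sigma R \tensor M \llra{\epsilon\tensor 1} M \llra{\eta\tensor 1} P_0 \tensor M.
\]
Since $R$ is $Q_0$-local (Proposition~\ref{key-sequence}), the observation forces $\Sigma R\tensor M$ to be free, so $\eta\tensor 1$ is a stable equivalence. To identify $\Omega M$, I would tensor the short exact sequence $0\to\Sigma P_1\to\Sigma R\to \F_2\to 0$ with $M$; the middle term $\Sigma R\tensor M$ is again free, so the associated extension cocycle produces a stable equivalence $\Omega M\hequiv \Sigma P_1\tensor M$. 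Dually, if $M\in\cA(1)\Mod^{(0)}$, then in the same triangle it is now the right-hand term $P_0\tensor M$ that is free (since $P_0$ is $Q_1$-local), whence $\epsilon\tensor 1$ is a stable equivalence.

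Finally, I would prove $P_1^{\tensor(n)}\hequiv \Sigma^{-n}\Omega^n P_0$ by induction on $n$. For $n=1$, the loop-identification applied with $M=P_0$ gives $\Omega P_0\hequiv \Sigma P_1\tensor P_0$, and then the first assertion applied with $M=\Sigma P_1$ (which lies in $\cA(1)\Mod^{(1)}$) collapses the right side to $\Sigma P_1$. For the inductive step, iterated K\"unneth shows $P_1^{\tensor(n)}$ is itself $Q_1$-local, so $\Omega(P_1^{\tensor(n)})\hequiv \Sigma P_1\tensor P_1^{\tensor(n)} = \Sigma P_1^{\tensor(n+1)}$; combining this with the inductive hypothesis and the (trivial) commutativity of $\Sigma$ with $\Omega$ yields the claim. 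There is no serious obstacle here---the only mild bookkeeping is tracking suspensions and verifying that $Q_1$-locality is preserved under the tensor products being formed.
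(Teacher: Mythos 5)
Your proposal is correct and follows essentially the same route as the paper: the crux in both is that a tensor product of a bounded-below $Q_0$-local module with a bounded-below $Q_1$-local module is free (K\"unneth plus the Adams--Margolis criterion), after which everything falls out of tensoring the exact sequences/triangle from Proposition~\ref{key-sequence} with $M$ and running the same induction. The only cosmetic difference is in the last case, where you note that $P_0\tensor M$ is free in the tensored triangle, while the paper instead notes that $\Sigma P_1\tensor M$ is free in the tensored short exact sequence; these are interchangeable ways of concluding that $\epsilon\tensor 1$ is a stable equivalence.
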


\begin{proof}
If $M$ is $Q_1$-local, then $ R \tensor M$ has trivial
$Q_i$-homology for both $i=0$ and $i=1$.  If $M \in \cA(1)\Mod^{(1)}$
then $ R \tensor M$ is also bounded-below, and hence free by
Theorem~\ref{stableiso}.
Tensoring the first short
exact sequence of the preceding proposition with $M$ then gives that
$\Omega M \hequiv \Sigma P_1 \tensor M$.  
Tensoring the second one with $M$ shows that $\eta \tensor 1 $ is a
 stable equivalence.

Since $P_0$ and $P_1$ are in  $\cA(1)\Mod^{(1)}$,
we have $\Omega P_0 \hequiv \Sigma P_1 \tensor P_0 \hequiv \Sigma P_1$,
 proving the $n=1$ case of the equivalence
 $P_1^{\tensor(n)}  \hequiv \Sigma^{-n} \Omega^{n} P_0$.
The remaining cases then follow immediately 
by induction:
\begin{align*}
\Omega^{n+1} P_0 = \Omega \Omega^n P_0 & \hequiv \Omega \Sigma^n P_1^{\tensor(n)} \\
& \hequiv \Sigma P_1 \tensor \Sigma^n P_1^{\tensor(n)}
\iso \Sigma^{n+1} P_1^{\tensor({n+1})}
\end{align*}
The last statement is proved dually:  since $\Sigma P_1 \tensor M$ is free
by the K\"unneth isomorphism and Theorem~\ref{stableiso},
$\epsilon \tensor 1$ is a stable equivalence.
\end{proof}

Determining minimal representatives for the tensor powers $P_1^{\tensor{n}}$
is now reduced to finding minimal representatives for the $\Omega^n P_0$.
By periodicity, we only need the first four.
The following definition will be convenient.

\begin{defn}
For $n \in \Z$, let $P_n = (\Sigma^{-n} \Omega^n P_0)^\red$.
\end{defn}

Clearly the notation is consistent with
our definitions of $P_i$, $i=0,1$.  We first record some obvious facts.

\begin{theorem}
\label{multmain} 
The modules $P_n$ are $Q_1$-local and satisfy the following equivalences.
\begin{itemize}
\item If $n\geq 1$ then $(P_1^{\tensor(n)})^{\red} = P_n$.
\item $P_{n+4} \iso \Sigma^8 P_n$,
\item $P_n \tensor P_m \hequiv P_{n+m}$, and
\item $\Omega P_n \hequiv \Sigma P_{n+1}$.
\end{itemize}
\end{theorem}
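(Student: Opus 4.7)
The plan is to derive all four items from the key identity $P_1^{\tensor(n)} \hequiv \Sigma^{-n}\Omega^n P_0$ of Theorem~\ref{firstgeneral}, the four-fold periodicity of Theorem~\ref{fourthloops}, and the idempotency $P_0 \tensor P_0 \hequiv P_0$. The last is itself an immediate corollary of Theorem~\ref{firstgeneral}: applying the stable equivalence $\eta \tensor 1 : M \hequiv P_0 \tensor M$, valid for $M \in \cA(1)\Mod^{(1)}$, to the $Q_1$-local module $M = P_0$ gives $P_0 \hequiv P_0 \tensor P_0$.

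First I would verify that each $P_n$ is $Q_1$-local.  Since free summands contribute trivially to Margolis homology, it suffices to show $H(\Omega^n P_0, Q_0) = 0$; by the K\"unneth formula for Margolis homology this follows from $H(P_0, Q_0) = 0$, for any integer $n$.  Item~(1) is then immediate from Theorem~\ref{firstgeneral} and Proposition~\ref{stableisored}, since stable equivalence in $\cA(1)\Mod^b$ is detected on reduced parts:
\[
(P_1^{\tensor(n)})^\red \iso (\Sigma^{-n}\Omega^n P_0)^\red = P_n.
\]

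For item~(2), Theorem~\ref{fourthloops} applied to $P_0$ gives $\Omega^{n+4} P_0 \hequiv \Sigma^{12}\Omega^n P_0$; desuspending by $\Sigma^{n+4}$ and passing to reduced parts yields $P_{n+4} \iso \Sigma^8 P_n$.  For item~(3), the computation is
\[
P_n \tensor P_m \hequiv \Sigma^{-n}\Omega^n P_0 \tensor \Sigma^{-m}\Omega^m P_0 \hequiv \Sigma^{-(n+m)}\Omega^{n+m}(P_0 \tensor P_0) \hequiv \Sigma^{-(n+m)}\Omega^{n+m} P_0 \hequiv P_{n+m},
\]
using the identity $\Omega^a X \tensor \Omega^b Y \hequiv \Omega^{a+b}(X \tensor Y)$ (immediate from $\Omega = I \tensor -$ together with $\Omega^k \Omega^l \hequiv \Omega^{k+l}$) followed by the idempotency of $P_0$.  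Item~(4) follows at once:
\[
\Omega P_n \hequiv \Omega\bigl(\Sigma^{-n}\Omega^n P_0\bigr) = \Sigma\bigl(\Sigma^{-(n+1)}\Omega^{n+1} P_0\bigr) \hequiv \Sigma P_{n+1}.
\]

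I do not expect any serious obstacle: the theorem collects bookkeeping corollaries of the periodicity results already proved.  The only delicate point is handling negative indices $n,m$, which is covered by Definition~\ref{Loopinverse} together with the corollary that $\Omega^k \Omega^l \hequiv \Omega^{k+l}$ holds for all integers $k,l$.
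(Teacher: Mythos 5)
Your proof is correct and follows essentially the same route as the paper's, which disposes of the last two items in one line as ``immediate consequences of the first'' (i.e.\ of $(P_1^{\tensor(n)})^{\red} = P_n$, via Theorem~\ref{firstgeneral}) and obtains item~(2) from Theorem~\ref{fourthloops} after passing to reduced parts. Your only small improvement is that by working directly with $\Sigma^{-n}\Omega^n P_0$ and the identity $\Omega^a X \tensor \Omega^b Y \hequiv \Omega^{a+b}(X\tensor Y)$, you handle arbitrary integer indices $n,m$ explicitly, whereas the paper's appeal to item~(1) literally applies only to $n\geq 1$ and leaves the extension to negative indices implicit.
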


\begin{proof}
The first statement is immediate from the definition of $P_n$ and
Theorem~\ref{firstgeneral}.
Since $\Omega^{n+4}P_0 \hequiv \Sigma^{12}\Omega^n P_0$  by
Proposition~\ref{key-sequence}
 and 
Theorem~\ref{fourthloops}, we have a stable equivalence
$P_{n+4} \hequiv \Sigma^8 P_n$.  But, both sides are reduced and hence they are
isomorphic (Theorem~\ref{stableisored}).
The third and fourth statements are immediate consequences of the
first.
\end{proof}

The modules $M_i$ which appear in the sequence of 
Proposition~\ref{fourfoldperiodicity} (Definition~\ref{defMi})
all occur as submodules of the $P_i$.
See Figure~\ref{modules} for diagrammatic representations of them.

%% From the short exact sequences in Proposition~\ref{key-sequence}, 
%% it is clear that something like the following theorem should hold.

%% For those who are familiar with these diagrams, the work required to
%% prove the following theorem takes a small fraction of the time required
%% to type it up.

\begin{theorem}
\label{main} 
There are  short exact sequences
\[
\xymatrix{
0
\ar[r]
&
M_0
\ar[r]
&
P_0
\ar[r]
&
R
\ar[r]
&
0
\\
0 
\ar[r]
&
M_1
\ar[r]
&
P_1 
\ar[r]
&
\Sigma^4 R
\ar[r]
&
0
\\
0
\ar[r]
&
M_2
\ar[r]
&
P_2
\ar[r]
&
\Sigma^4 R
\ar[r]
&
0
\\
0
\ar[r]
&
M_3
\ar[r]
&
 P_3
\ar[r]
&
\Sigma^{4} R
\ar[r]
&
0
\\
}
\]
Each of these is the unique non-trivial extension, with
$Sq^1$ of the bottom class in the suspension of $R$ equal to the unique
element of $M_i$ of the relevant degree.
\end{theorem}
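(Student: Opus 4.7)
The case $i=0$ is immediate: it is the second short exact sequence of Proposition~\ref{key-sequence}, with $M_0 = \F_2$ and quotient $R$.

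For $i = 1, 2, 3$, the plan is to exhibit each $P_i$ explicitly and then read off the inclusion $M_i \hookrightarrow P_i$ and its quotient by inspection. The module $P_1 = H^*BC_2$ is already explicit, and $P_2, P_3$ can be computed either by forming $P_1^{\tensor 2}$ and $P_1^{\tensor 3}$ and stripping off free $\cA(1)$-summands (justified by Theorem~\ref{multmain}), or by iterating the stable equivalence $\Omega P_n \hequiv \Sigma P_{n+1}$, i.e.\ tensoring with the augmentation ideal $I$ and removing free summands at each step. Either approach produces the diagrams to be displayed in Figure~\ref{modules}. In each of the three remaining cases, the cyclic sub-$\cA(1)$-module of $P_i$ generated by the bottom class (in degrees $1, 2, 4$ for $i = 1, 2, 3$) satisfies the defining relations of Proposition~\ref{descrMi}, so a degree-by-degree dimension comparison identifies it with $M_i$. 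Reading off the residual diagram and matching it against $R$ suspended four times confirms that the quotient is $\Sigma^4 R$.

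Uniqueness of the non-split extension reduces to showing $\Ext^1_{\cA(1)}(\Sigma^4 R, M_i) \cong \F_2$. The plan is to use the $\cA(0)$-split relative resolution $F_\bullet \to \F_2$ of Proposition~\ref{fourfoldperiodicity} to construct a small partial projective resolution of $R$, from which $\Ext^1(\Sigma^4 R, M_i)$ is computed as the cokernel of a single map between small $\Hom$ groups and turns out to be one-dimensional. The extensions constructed above are non-split, since the visible $Sq^1$ connecting the bottom of the quotient $\Sigma^4 R$ back into $M_i$ inside $P_i$ would have to vanish in any direct sum decomposition; thus they represent the unique non-zero class. The description of $Sq^1$ in the last sentence of the theorem then follows directly from the explicit diagrams of $P_i$: the bottom class of the suspended copy of $R$ has $Sq^1$ landing in the unique non-zero piece of $M_i$ in the relevant degree.

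The hard part will be the $\Ext$ calculation.  It would be conceptually cleaner to pass to the triangulated description $\Ext^1_{\cA(1)}(\Sigma^4 R, M_i) \cong [\Sigma^3 R, M_i]_{\St(\cA(1))}$ and pin this group down in one stroke by playing the $Q_0$-locality of $R$ off against the explicit structure of $M_i$, rather than running three small but separate cokernel computations; but the case-by-case relative-resolution approach above is certain to succeed and is enough to close out the theorem.
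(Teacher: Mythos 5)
Your proposal is correct, but it takes a genuinely different route from the paper's proof for $i=2,3$. The paper proceeds \emph{inductively via the snake lemma}: having established the $P_1$ sequence by inspection (bottom cyclic submodule is $M_1$, quotient is $\Sigma^4 R$), it takes minimal free covers of the three terms of that sequence and applies the snake lemma, so that the kernel row delivers the $P_2$ sequence directly, with $P_2$ itself emerging as the middle kernel rather than being computed separately; one more iteration gives the $P_3$ sequence. Your approach instead first produces explicit diagrams of each $P_i$ (by stripping free summands from $P_1^{\tensor i}$ or by iterating $\Omega$), then matches the cyclic submodule generated by the bottom class against $M_i$ degree by degree and reads off the quotient. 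What the paper's approach buys is that the snake lemma simultaneously constructs $P_{i+1}$ and packages it in the claimed exact sequence, so there is no separate ``identification'' step; what your approach buys is that the answer is manifestly visible in Figure~\ref{modules}, making the $Sq^1$ claim in the last sentence of the theorem immediate. For uniqueness, the paper dismisses it as ``easy to check'' after the snake-lemma diagram, whereas you propose to compute $\Ext^1_{\cA(1)}(\Sigma^4 R, M_i) \cong \F_2$ via a relative resolution; your plan is heavier but more explicit and would certainly close the gap the paper leaves terse, and your remark that this could instead be done by the stable-category identification $\Ext^1_{\cA(1)}(\Sigma^4 R, M_i) \cong [\Sigma^3 R, M_i]$ together with $Q_0$-locality of $R$ is in fact the cleanest route.
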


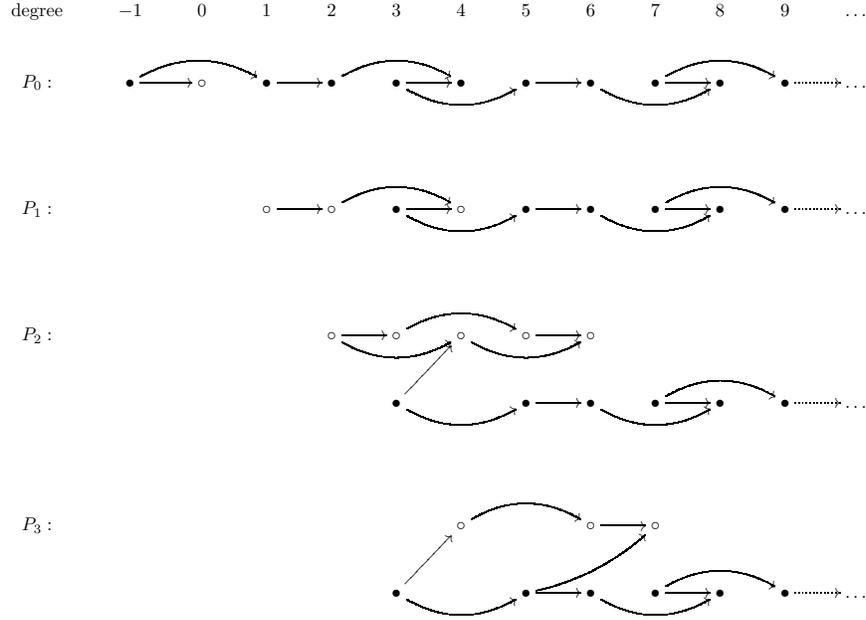
\begin{figure}
\scalebox{.7}{
$
\xymatrix{
{\mathrm{degree}}
&
-1
&
0
&
1
&
2
&
3
&
4
&
5
&
6
&
7
&
8
&
9 
&
\ldots
\\
P_0:
&
\bullet 
\ar[r]
\ar@/^1pc/[rr]
&
\circ
&
\bullet 
\ar[r]
&
\bullet 
\ar@/^1pc/[rr]
&
\bullet 
\ar[r]
\ar@/_1pc/[rr]
&
\bullet
&
\bullet 
\ar[r]
&
\bullet
\ar@/_1pc/[rr]
&
\bullet
\ar[r]
\ar@/^1pc/[rr]
&
\bullet
&
\bullet
\ar@{.>}[r]
&
\ldots
\\
&
\\
P_1:
&&&
\circ
\ar[r]
&
\circ
\ar@/^1pc/[rr]
&
\bullet 
\ar[r]
\ar@/_1pc/[rr]
&
\circ
&
\bullet 
\ar[r]
&
\bullet
\ar@/_1pc/[rr]
&
\bullet
\ar[r]
\ar@/^1pc/[rr]
&
\bullet
&
\bullet
\ar@{.>}[r]
&
\ldots
\\
&
\\
P_2:
&&&&
\circ
\ar[r]
\ar@/_1pc/[rr]
&
\circ
\ar@/^1pc/[rr]
&
\circ
\ar@/_1pc/[rr]
&
\circ
\ar[r]
&
\circ
&
\\
&&&&&
\bullet
\ar[ru]
\ar@/_1pc/[rr]
&&
\bullet
\ar[r]
&
\bullet
\ar@/_1pc/[rr]
&
\bullet
\ar@/^1pc/[rr]
\ar[r]
&
\bullet
&
\bullet
\ar@{.>}[r]
&
\ldots
\\
&
\\
P_3:
&&&&&&
\circ
\ar@/^1pc/[rr]
&&
\circ
\ar[r]
&
\circ
&
\\
&&&&&
\bullet
\ar[ru]
\ar@/_1pc/[rr]
&&
\bullet
\ar@/_.5pc/[rru]
\ar[r]
&
\bullet
\ar@/_1pc/[rr]
&
\bullet
\ar@/^1pc/[rr]
\ar[r]
&
\bullet
&
\bullet
\ar@{.>}[r]
&
\ldots
\\
}
$
} % end scalebox
\caption{The modules $P_n$, $0 \leq n \leq 3$.  The submodules $M_n$
are indicated by open dots ($\circ$).
\label{modules}}
\end{figure}

%% \begin{remark} 
%% The stable type of $\Omega^n P$, and thus of  $P^{\tensor(n)}$, therefore
%% cycles with period 4 (up to suspensions).  
%% The simplicity of the complex case stems from the fact that
%% over $E(1)$, the stable type of
%% $\Omega P$ is just $\Sigma^3 P$ so that $P \tensor P$ is stably $\Sigma^2 P$,
%% and, more generally, $P^{\tensor(n+1)}$ is stably $\Sigma^{2n}P$.
%% 
%% \end{remark}

\begin{proof}
The first short exact sequence is a restatement of the last short exact sequence in
Proposition~\ref{key-sequence}.
Next, the submodule of $P_1$ generated by the bottom class is $M_1$ and
the quotient by it is $\Sigma^4 R$.  This gives 
\begin{equation}
\label{seq2}
0 \lra  M_1 \lra P_1 \lra \Sigma^{4}R \lra 0,
\end{equation}
the second of our claimed short exact sequences.
Taking minimal free modules mapping onto the three modules in~(\ref{seq2}) and applying
the snake lemma produces the suspension of the next of our claimed sequences.
\[
\xymatrix{
0 \ar[r]
&
M_1 \ar[r]
&
P_1 \ar[r]
&
\Sigma^4 R \ar[r]
&
0
\\
0 \ar[r]
&
\Sigma \cA(1) \ar[r] \ar[u]
&
F_1 \ar[r] \ar[u]
&
\Sigma^4 F \ar[r] \ar[u]
&
0
\\
0 \ar[r]
&
\Sigma M_2 \ar[r] \ar[u]
&
\Sigma P_2 \ar[r] \ar[u]
&
\Sigma^5 R \ar[r] \ar[u]
&
0
\\
}
\]
Here $F_1 = \Sigma \cA(1) \oplus \Sigma^4 F$ and $F$ is the free module used in
the proof of Proposition~\ref{key-sequence}.
It is easy to check that the
top and bottom rows in the preceding diagram are each the unique non-trivial
extension.

Applying this procedure again, we get
\[
\xymatrix{
0 \ar[r]
&
M_2 \ar[r] 
&
P_2 \ar[r] 
&
\Sigma^4 R \ar[r] 
&
0
\\
0 \ar[r]
&
\Sigma^2 \cA(1) \ar[r] \ar[u]
&
F_2 \ar[r] \ar[u]
&
\Sigma^4 F \ar[r] \ar[u]
&
0
\\
0 \ar[r]
&
\Sigma^{9}DM_1 \ar[r] \ar[u]
&
\Sigma P_3 \ar[r] \ar[u]
&
\Sigma^5 R \ar[r] \ar[u]
&
0
\\
}
\]
Here $F_2 = \Sigma^2 \cA(1) \oplus \Sigma^4 F$
and
$DM = \Hom(M,\F_2)$ is the dual of $M$.
Removing one suspension gives the last of our short exact sequences.
\end{proof}

\begin{remark}
\label{error}
In \cite{Ossa}, Lemma 2 asserts that $P_1 \tensor P_1$ is stably equivalent to
$\Sigma^2 P_1$ rather than $\Sigma^{-1}\Omega P_1$.  
These are the same
in the category of $E(1)$-modules, but
not in the category of 
$\cA(1)$-modules.
These modules differ by one copy of $E(1)$.
This also makes Proposition 2 there false, both in identifying the degrees of the
Eilenberg-MacLane summands, and in identifying the complement to them.
See Corollary~\ref{PP} for the correct statement.
\end{remark}

\section{$Q_i$-local $A(1)$-modules}
\label{seclocalization}

%% The modules $P_n$ are $Q_1$-local, in the sense that $H(P_n,Q_i) = 0$ unless
%% $i=1$.  Similarly, $R$ is $Q_0$-local.  In addition,
%% Theorem~\ref{main} and Corollary~\ref{first} say that the modules 
%% $P_0$ and $\Sigma R$ are stably idempotent.  
%% This implies that tensoring with either of these is
%% a stably idempotent functor.
%% The K\"unneth isomorphism gives
%% \[
%% H(P_0 \tensor M, Q_0) = 0 \tensor H(M,Q_0) = 0
%% \qquad
%% {\mathrm{ and}}
%% \qquad
%% H(\Sigma R \tensor M, Q_1) = 0 \tensor H(M,Q_1) = 0,
%% \]
%% which motivates the consideration of the following subcategories of the
%% category of $\cA(1)$ or $E(1)$-modules.

%% \begin{defn}
%% Let $B\Mod^{(k)}$ be the full subcategory  of $B\Mod^f$
%% containing the {\em $Q_k$-local} 
%% modules, i.e., those
%% whose homology with respect to $Q_i$ is 0 unless $i=k$.
%% \end{defn}

%% Direct sum and tensor product make $B\Mod^b$ and $B\Mod^{(k)}$ into 
%% commutative semi-rings, and
%% the idempotence of $\Sigma R$ and $P_0$ make tensoring with them
%% into semi-ring homomorphisms, up to stable equivalence.

Again let $B=E(1)$ or $\cA(1)$.  We now consider the
two Margolis localizations (at $Q_0$ and at $Q_1$) of $B\Mod$.

\begin{defn} 
\label{Lidefn}
Let  $\epsilon : \Sigma R \lra \F_2$ and $\eta : \F_2 \lra P_0$ be the
unique non-zero homomorphisms.  Define functors
$\LL_i : B\Mod^b \lra B\Mod^{(i)}$  and natural transformations
 $\eta_M : M \lra \LL_1 M$ 
and
 $\epsilon_M : \LL_0 M \lra M$  by
\[
\xymatrix{
\Sigma R \tensor M
\ar@{=}[d]
\ar^{\epsilon \tensor 1}[r]
&
\F_2 \tensor M
\ar^{\eta \tensor 1}[r]
\ar^{\iso}[d]
&
P_0 \tensor M
\ar@{=}[d]
\\
\LL_0 M
\ar_{\epsilon_M}[r]
&
M
\ar_{\eta_M}[r]
&
\LL_1 M
\\
}
\]
\end{defn}

The functors they induce on stable module categories
are idempotent, orthogonal,  semi-ring homomorphisms.
We make these statements precise as follows.

\begin{theorem}
\label{Libasicprops}
$\LL_i M$ is $Q_i$-local.
$\LL_0$ and $\LL_1$ are exact and additive, and preserve tensor products
up to stable equivalence.
\begin{enumerate}
\item $\LL_0 \LL_1 M \hequiv 0 \hequiv \LL_1 \LL_0 M$.
\item $\epsilon_M$ induces an isomorphism of $Q_0$ homology.
\item $\eta_M$ induces an isomorphism of $Q_1$ homology.
\item 
If $M\in \cA(1)\Mod^{(0)}$ then
$\epsilon_M$ is a stable equivalence 
and $\LL_1 M \hequiv 0$.
\item 
If $M\in \cA(1)\Mod^{(1)}$  then
$\eta_M$ is a stable equivalence 
and $\LL_0 M \hequiv 0$.
\end{enumerate}
\end{theorem}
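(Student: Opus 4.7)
The plan is to reduce every claim to the definition $\LL_0 M = \Sigma R \tensor M$, $\LL_1 M = P_0 \tensor M$ together with three tools already on hand: the K\"unneth isomorphism for Margolis homology, Theorem~\ref{stableiso} (bounded-below modules with trivial $Q_0$ and $Q_1$ homology are free), and Theorem~\ref{firstgeneral} (the maps $\epsilon\tensor 1$ and $\eta\tensor 1$ are stable equivalences on $Q_0$- and $Q_1$-local modules respectively).

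First I would dispose of the unlabeled assertions. For $Q_i$-locality of $\LL_i M$, note that $\Sigma R$ is $Q_0$-local and $P_0$ is $Q_1$-local by Proposition~\ref{key-sequence}; K\"unneth then gives $H(\Sigma R\tensor M,Q_1)=0=H(P_0\tensor M,Q_0)$. Exactness and additivity are immediate from the fact that $\tensor=\tensor_{\F_2}$ is exact and additive. For tensor-product preservation up to stable equivalence, write $\LL_i(M\tensor N)\iso \LL_i\F_2\tensor M\tensor N$ and $\LL_i M\tensor\LL_i N\iso \LL_i\F_2\tensor\LL_i\F_2\tensor M\tensor N$, so it suffices to show $\LL_i\F_2$ is tensor idempotent. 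Applying Theorem~\ref{firstgeneral} with $M=\Sigma R$ (which is $Q_0$-local) yields $\Sigma R\tensor\Sigma R\hequiv\Sigma R$, and with $M=P_0$ yields $P_0\tensor P_0\hequiv P_0$.

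For (1), the module $\Sigma R\tensor P_0\tensor M$ is bounded-below, and by K\"unneth it has trivial $Q_0$ homology (since $H(P_0,Q_0)=0$) and trivial $Q_1$ homology (since $H(\Sigma R,Q_1)=0$). Theorem~\ref{stableiso} then forces it to be free, so both $\LL_0\LL_1 M$ and $\LL_1\LL_0 M$ are stably trivial. For (2) and (3), apply the long exact sequences in Margolis homology to the two short exact sequences of Proposition~\ref{key-sequence}: since $\Sigma P_1$ is $Q_1$-local, $0\lra \Sigma P_1\lra \Sigma R\llra{\epsilon}\F_2\lra 0$ forces $\epsilon$ to induce an isomorphism on $Q_0$ homology; since $R$ is $Q_0$-local, $0\lra \F_2\llra{\eta}P_0\lra R\lra 0$ forces $\eta$ to induce an isomorphism on $Q_1$ homology. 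K\"unneth then transports these isomorphisms to $\epsilon_M=\epsilon\tensor 1$ and $\eta_M=\eta\tensor 1$.

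Items (4) and (5) are now almost immediate. If $M\in\cA(1)\Mod^{(0)}$, Theorem~\ref{firstgeneral} directly gives that $\epsilon_M$ is a stable equivalence; and $\LL_1 M=P_0\tensor M$ is bounded-below with both Margolis homologies zero (K\"unneth again, using that $M$ kills $Q_1$ and $P_0$ kills $Q_0$), hence free by Theorem~\ref{stableiso}. The case $M\in\cA(1)\Mod^{(1)}$ is symmetric. There is no serious obstacle in the argument; the only place calling for a little care is the tensor-product preservation statement, where one must notice that $\LL_i$-idempotence of $\F_2$ is exactly what Theorem~\ref{firstgeneral} asserts when read at the appropriate $Q_i$-local module, and the rest is bookkeeping.
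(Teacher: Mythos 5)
Your proof is correct and follows essentially the same route as the paper: $Q_i$-locality via K\"unneth and Proposition~\ref{key-sequence}, tensor preservation via idempotence of $\Sigma R$ and $P_0$ read off from Theorem~\ref{firstgeneral}, item (1) from freeness of $\Sigma R\tensor P_0$ via Theorem~\ref{stableiso}, and (2)--(5) from the $\F_2$ case transported by K\"unneth. The only difference is cosmetic: you invoke Theorem~\ref{firstgeneral} directly for (4)--(5), whereas the paper deduces them from (2)--(3) via Adams--Margolis, but both are one-line appeals to the same toolkit.
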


\begin{proof}
That $\LL_i M$ is $Q_i$-local is immediate from the K\"unneth theorem
for $Q_j$ homology and Proposition~\ref{key-sequence}.
It is a general fact that tensor product is exact and preserves direct sums.  
Applying Theorem~\ref{firstgeneral} to $M=R$ and $M=P_0$,
we find that $\Sigma R \tensor \Sigma R \hequiv \Sigma R$ and
$P_0 \tensor P_0 \hequiv P_0$.
Preservation of tensor products then follows by associativity
and this idempotence.
Statement (1) follows from the fact that
$\Sigma R \tensor P_0$ is free by Proposition~\ref{key-sequence},
the K\"unneth theorem, and
Theorem~\ref{stableiso}.  Then (2) and (3) 
follow from the K\"unneth  theorem for $Q_i$ homology
and the case $M = \F_2$.
Finally, (4) and (5) are then immediate by the theorem of
Adams and Margolis (Theorem~\ref{stableiso}).
\end{proof}

Here is a  more precise form of idempotence.

\begin{theorem}
The $\LL_i$ are stably idempotent.  In particular, the following hold.
\begin{enumerate}
\item
$\LL_0 \epsilon_M$,  $\epsilon_{\LL_0 M}$,
$\LL_1 \eta_M$, and $\eta_{\LL_1 M}$ are  stable equivalences.
\item
$\LL_0 \epsilon_M \hequiv \epsilon_{\LL_0 M}$ and
$\LL_1 \eta_M \hequiv \eta_{\LL_1 M}$.
\item
$\LL_0 \epsilon_M$ and $ \epsilon_{\LL_0 M}$ 
are  not equal, but are
coequalized by $\epsilon_M$.
\item
$\LL_1 \eta_M$ and $ \eta_{\LL_1 M}$ are 
not equal, but are
equalized by $\eta_M$.
\end{enumerate}
\end{theorem}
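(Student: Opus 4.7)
The plan is to split the statement into (1), the literal equalities in (3) and (4), and the stable equivalence (2), treating them in that order and reserving (2) as the main content.

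For (1), each of the four maps has the form $1 \tensor f \tensor 1_M$ or $f \tensor 1 \tensor 1_M$ with $f \in \{\eta,\epsilon\}$, so its cofiber (respectively fiber) is obtained by tensoring $M$ with one of $R \tensor P_0$, $P_0 \tensor R$, $\Sigma R \tensor \Sigma P_1$, or $\Sigma P_1 \tensor \Sigma R$. Each two-fold factor pairs a $Q_0$-local module with a $Q_1$-local one, so the K\"unneth theorem annihilates both $Q_0$- and $Q_1$-Margolis homology; tensoring further with the bounded-below $M$ preserves this vanishing, and Theorem~\ref{stableiso} then makes the result free. Hence all four maps are stable equivalences.

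For the equalities in (3) and (4), bifunctoriality of $\tensor$ yields $\epsilon_M \circ \LL_0 \epsilon_M = \epsilon \tensor \epsilon \tensor 1_M = \epsilon_M \circ \epsilon_{\LL_0 M}$ and, in parallel, $\LL_1 \eta_M \circ \eta_M = \eta \tensor \eta \tensor 1_M = \eta_{\LL_1 M} \circ \eta_M$. That the two maps nevertheless differ is already visible at $M = \F_2$: writing $a_{-1}$ for the degree $-1$ generator of $P_0$, $\LL_1 \eta_{\F_2}(a_{-1}) = a_{-1} \tensor a_0$ while $\eta_{\LL_1 \F_2}(a_{-1}) = a_0 \tensor a_{-1}$, and these are distinct elements of $P_0 \tensor P_0$; the $\epsilon$ case is analogous, e.g.\ comparing the two images of $r_0 \tensor r_2$ in $\Sigma R \tensor \Sigma R$, where $r_0$ is the bottom class and $r_2$ has positive degree.

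Part (2) is the substantive step. The key input will be the vanishing principle that $\Hom_{\St}(X,Y) = 0$ whenever $X$ is $Q_0$-local and $Y$ is $Q_1$-local (both bounded-below); this follows at once from Theorem~\ref{Libasicprops}, since for $f \colon X \to Y$ naturality of $\epsilon$ gives $f \circ \epsilon_X \hequiv \epsilon_Y \circ \LL_0 f$, the right side is stably zero since $\LL_0 Y \hequiv 0$ (as $Y$ is $Q_1$-local), and $\epsilon_X$ is a stable equivalence ($X$ being $Q_0$-local). To deduce the $\eta$ half of (2), set $d = \LL_1 \eta_M - \eta_{\LL_1 M}$. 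By the literal equality from (4), $d \circ \eta_M = 0$, so in the triangle $M \lra P_0 \tensor M \lra R \tensor M$, the map $d$ lifts stably to $R \tensor M \lra P_0 \tensor P_0 \tensor M$. K\"unneth makes $R \tensor M$ $Q_0$-local and $P_0 \tensor P_0 \tensor M$ $Q_1$-local, so the lift, and hence $d$, is stably zero. The $\epsilon$ half of (2) is dual, replacing the cofiber $R \tensor M$ by the fiber $\Sigma P_1 \tensor M$ of $\epsilon_M$.

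The step I expect to require the most care is precisely (2). One cannot simply cancel $\eta_M$ or $\epsilon_M$ from the (co)equalized identities in (3) and (4), because those maps are stable equivalences only after the appropriate localization has already been applied to $M$. The proof must instead leave the (co)equalizer in place and draw on the triangulated-category lift together with the vanishing-Hom principle above, which is where the $Q_0$-local vs.\ $Q_1$-local dichotomy established in Theorem~\ref{Libasicprops} does the essential work.
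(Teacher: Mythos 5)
Your argument is correct, but for part (2) it takes a genuinely different route from the paper. The paper proves (2) by \emph{explicit construction}: it reduces to $M=\F_2$ (via Proposition~\ref{stableisored}), then writes down a concrete map $i \colon \Sigma R \to \Sigma R \tensor \Sigma R$, given by $i(\Sigma x^n)=\sum_{i+j=n-1}\Sigma x^i \tensor \Sigma x^j$, verifies that $i$ is a stable equivalence onto the reduced summand, and checks directly that it equalizes $\epsilon \tensor 1$ and $1 \tensor \epsilon$; the $\eta$ half is handled dually with an explicit coequalizing map $P_0\tensor P_0\to P_0$. Your proof instead isolates the vanishing principle $\Hom_{\St}(X,Y)=0$ for $X$ bounded-below $Q_0$-local and $Y$ bounded-below $Q_1$-local, and then runs a triangulated-category factorization: the literal (co)equalization from (3)/(4) makes the difference $d$ vanish after pre- or post-composition with $\eta_M$ (resp. $\epsilon_M$), so $d$ factors stably through the $Q_0$-local (co)fiber, which maps only trivially into the $Q_1$-local target. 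Your route is more conceptual, works uniformly in $M$ without reducing to $\F_2$, and its vanishing lemma is reusable elsewhere; the paper's route has the advantage of exhibiting the equivalences as explicit $\cA(1)$-linear maps, which is occasionally useful when one wants formulas rather than existence. Your treatments of (1), (3) and (4) agree in substance with the paper's; the explicit non-equality witnesses you give for (3) and (4) are a detail the paper leaves to the reader.
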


%% $\LL_0 \epsilon_M$ and $ \epsilon_{\LL_0 M}$ 
%% are 
%% coequalized by $\epsilon_M$, while
%% $\LL_1 \eta_M$ and $ \eta_{\LL_1 M}$ are 
%% equalized by $\eta_M$.
%% \[
%% \xymatrix{
%% \LL_0 \LL_0 M
%% \ar_{\hequiv}^{\LL_0 \epsilon_M}[r]
%% \ar^{\hequiv}_{\epsilon_{\LL_0 M}}[d]
%% &
%% \LL_0 M
%% \ar^{\epsilon_M}[d]
%% &&
%% M
%% \ar_{\eta_M}[d]
%% \ar^{\eta_M}[r]
%% &
%% \LL_1 M
%% \ar_{\hequiv}^{\LL_1 \eta_M}[d]
%% \\
%% \LL_0 M
%% \ar_{\epsilon_M}[r]
%% &
%% M
%% && 
%% \LL_1 M
%% \ar^{\hequiv}_{\eta_{\LL_1 M}}[r]
%% &
%% \LL_1 \LL_1 M
%% \\
%% }
%% \]

\begin{proof}
Statement (1) is immediate from the preceding Theorem.
Statements (3) and (4) are elementary calculations:
$\epsilon \tensor 1$ and $1 \tensor \epsilon$ are coequalized
by $\epsilon$, while
$\eta \tensor 1$ and $1 \tensor \eta$ are equalized
by $\eta$.
To show the stable equivalences in (2), 
it suffices to treat the case $M = \F_2$.  For this,
we use Proposition~\ref{stableisored}.  
Since $(\Sigma R \tensor \Sigma R)^{\red} \iso \Sigma R$,
we need a stable equivalence $\Sigma R \lra \Sigma R \tensor \Sigma R$
which equalizes $\epsilon\tensor 1 $ and $1 \tensor \epsilon$.
We define such an $\cA(1)$ homomorphism by
\[
i(\Sigma x^n) = \sum_{i+j = n-1} 
\Sigma x^i \tensor \Sigma x^j  
\]
where we treat
$\Sigma x^0 $ as zero, and let $i$ and $j$ range over integers
$\geq -1$.
It is immediate that 
$(\epsilon \tensor 1)i = (1 \tensor \epsilon)i$ so that
$\epsilon \tensor 1 \hequiv 1 \tensor \epsilon$.

Dually, for the stable equivalence between $\eta \tensor 1$ and
$1 \tensor \eta$,  we
observe that they are coequalized by the stable equivalence
$P_0 \tensor P_0 \lra P_0$ given by
\[
x^i \tensor x^j \mapsto
\left\{ \begin{array}{ll}
0 & i \congruent -1 \pmod{4}\,\,\,\, \,\,{\mathrm{ and }}\,\,\,\,\,\, j \congruent -1 \pmod{2} \\
x^{i+j} & {\mathrm{otherwise.}}\\
\end{array} \right.
\]
\end{proof}

\begin{prop}
Algebraic loops commute with the $\LL_i$:
$\Omega \LL_i M \hequiv \LL_i \Omega M$.  In addition,
\begin{enumerate}
\item $\Omega \LL_0 M \hequiv \Sigma \LL_0 M$
\item $\Omega^i \LL_1 M \hequiv \Sigma^i P_i \tensor M $
\item $ \Omega^4 \LL_1 M \hequiv \LL_1 \Omega^4 M \hequiv \Sigma^{12} \LL_1 M$
\end{enumerate}
\end{prop}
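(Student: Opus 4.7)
Everything flows from the fact that $\LL_i$ is defined as tensoring with a fixed module ($\Sigma R$ for $i=0$, $P_0$ for $i=1$), combined with the periodicity theorems already established.

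For the commutation $\Omega \LL_i M \hequiv \LL_i \Omega M$: since $\Omega N = I \tensor N$ and $\LL_i M = L_i \tensor M$ with $L_0 = \Sigma R$ and $L_1 = P_0$, associativity and symmetry of the tensor product give an honest isomorphism
\[
\Omega \LL_i M \;=\; I \tensor L_i \tensor M \;\iso\; L_i \tensor I \tensor M \;=\; \LL_i \Omega M,
\]
not merely a stable equivalence. Statement (1) is then immediate: by Theorem~\ref{Libasicprops}, $\LL_0 M$ is $Q_0$-local, so applying Theorem~\ref{onefold} to $N = \LL_0 M$ yields $\Omega \LL_0 M \hequiv \Sigma \LL_0 M$.

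For (2), I would unwind the definition: $\Omega^i \LL_1 M = I^{\tensor i} \tensor P_0 \tensor M = (\Omega^i P_0) \tensor M$. But $P_i = (\Sigma^{-i} \Omega^i P_0)^\red$ by definition, so $\Omega^i P_0 \hequiv \Sigma^i P_i$. Because tensor product is well defined in the stable module category (from the untwisting isomorphism discussion), tensoring this stable equivalence with $M$ gives $\Omega^i \LL_1 M \hequiv \Sigma^i P_i \tensor M$, as claimed. Part (3) is obtained by specializing (2) to $i = 4$ and invoking the isomorphism $P_4 \iso \Sigma^8 P_0$ from Theorem~\ref{multmain}:
\[
\Omega^4 \LL_1 M \hequiv \Sigma^4 P_4 \tensor M \iso \Sigma^{12} P_0 \tensor M = \Sigma^{12} \LL_1 M.
\]
The remaining equivalence $\LL_1 \Omega^4 M \hequiv \Omega^4 \LL_1 M$ is a special case of the initial commutation statement. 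Alternatively, one can simply apply Theorem~\ref{fourthloops} directly to the $Q_1$-local module $\LL_1 M$.

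There is no real obstacle here: the result is essentially a bookkeeping exercise in which the periodicity theorems of Section~3 are transported through $\LL_i$ using the fact that $\LL_i$ is nothing but tensoring with a fixed module. The only point requiring care is the invocation that tensoring with a fixed module descends to a functor on the stable category and so preserves stable equivalences; this was already established via the untwisting isomorphism, so it can be cited without further comment.
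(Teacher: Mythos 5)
Your proof is correct and takes essentially the same approach as the paper: unwind the definitions $\Omega N = I\tensor N$ and $\LL_i M = L_i \tensor M$, commute tensor factors, and cite the periodicity results of Section~3 (Theorems~\ref{onefold}, \ref{fourthloops}, \ref{multmain}). The only cosmetic differences are that you observe the commutation $\Omega\LL_iM \iso \LL_i\Omega M$ is an honest isomorphism rather than just a stable equivalence, and that you apply Theorem~\ref{onefold} directly to $\LL_0 M$ rather than to $\Sigma R$; neither changes the substance.
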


\begin{proof}
Since tensoring a $B$-module with a free $B$-module gives a free $B$-module,
we have
\[
\Omega(M \tensor N) \hequiv (\Omega M) \tensor N \hequiv M \tensor (\Omega N).
\]
Then $\Omega \LL_0 M = \Omega (\Sigma R \tensor M) \hequiv \Sigma^2 R \tensor M$,
proving (1).  Similarly (2) follows because
$\Omega^i \LL_1 M = \Omega^i (P_0 \tensor M) \hequiv (\Omega^i P_0) \tensor M
\hequiv \Sigma^i P_i \tensor M $ by Theorem~\ref{multmain}.
Then (3) follows since $\Sigma^4 P_4 = \Sigma^{12} P_0$.
\end{proof}

%% NOTE: $\Omega^i \LL_1 M \hequiv M_i \tensor \LL_1 M$.  Do we need this?

\begin{prop}
\label{Lsubiproperties}
There are short exact sequences
\[
0 \lra M \llra{\eta_M} \LL_1(M) \lra \Sigma^{-1}\LL_0(M) \lra 0
\]
and
\[
0 \lra \Omega \LL_1(M) \lra \LL_0(M) \llra{\epsilon_M} M \lra 0.
\]
\end{prop}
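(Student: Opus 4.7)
The plan is to obtain both short exact sequences directly by tensoring the two defining short exact sequences from Proposition~\ref{key-sequence} with $M$. Since tensor product over $\F_2$ is exact and the functors $\LL_i$ are defined on the nose as $\LL_0 M = \Sigma R \tensor M$ and $\LL_1 M = P_0 \tensor M$, this is essentially a matter of reading off the definitions.

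For the first sequence, start with the short exact sequence
\[
0 \lra \F_2 \llra{\eta} P_0 \lra R \lra 0
\]
of Proposition~\ref{key-sequence} and tensor with $M$. This yields an honest short exact sequence of $\cA(1)$-modules
\[
0 \lra M \llra{\eta \tensor 1} P_0 \tensor M \lra R \tensor M \lra 0,
\]
in which the left-hand map is $\eta_M$ by Definition~\ref{Lidefn} and the middle term is $\LL_1 M$. The rightmost term is then identified using $R \tensor M = \Sigma^{-1}(\Sigma R \tensor M) = \Sigma^{-1}\LL_0 M$, giving exactly the first claimed sequence.

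For the second sequence, tensor the short exact sequence
\[
0 \lra \Sigma P_1 \lra \Sigma R \llra{\epsilon} \F_2 \lra 0
\]
of Proposition~\ref{key-sequence} with $M$ to obtain
\[
0 \lra \Sigma P_1 \tensor M \lra \Sigma R \tensor M \llra{\epsilon \tensor 1} M \lra 0.
\]
The middle term is $\LL_0 M$ and the right-hand map is $\epsilon_M$. To identify the left-hand term as $\Omega \LL_1 M$, apply the preceding proposition (with $i=1$), which supplies the stable equivalence $\Omega \LL_1 M \hequiv \Sigma P_1 \tensor M$.

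There is no real obstacle: both sequences are obtained by applying the exact functor $(-)\tensor M$ to sequences already in hand, and the only mildly nontrivial point is the identification of $\Sigma P_1 \tensor M$ with $\Omega \LL_1 M$, which is delivered by the stable equivalence in the proposition immediately preceding this one.
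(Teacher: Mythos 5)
Your proof is correct and takes the same approach as the paper: tensor the two short exact sequences of Proposition~\ref{key-sequence} with $M$ and read off the identifications from Definition~\ref{Lidefn}. You supply slightly more detail than the paper (notably invoking the preceding proposition to match $\Sigma P_1 \tensor M$ with $\Omega\LL_1 M$, which holds up to stable equivalence), but the argument is the same.
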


\begin{proof}
These follow from the short exact sequences of modules
\[
0 \lra \F_2 \llra{\eta} P_0 \lra R \lra 0
\]
and
\[
0 \lra \Sigma P_{1} \lra \Sigma R \llra{\epsilon} \F_2 \lra 0.
\]
\end{proof}

%% It is worth emphasizing the consequences of the last two statements.
%% 
%% \begin{corollary}
%% \label{Lisurj}
%% The functors $\LL_i$ split the inclusions, up to stable equivalence:
%% the composites
%% \[
%% B\Mod^{(i)} \lra B\Mod^b \llra{\LL_i} B\Mod^{(i)}
%% \]
%% are stably equivalent, by $\eta$ and $\epsilon$,
%%  to the identity.  In addition,
%% \begin{itemize}
%% \item
%% If $M$ is in $B\Mod^{(0)}$ then 
%% $\Omega \LL_1(M)$ is a free $B$-module and
%% \[
%% \LL_0(M)  \iso M \dirsum \Omega \LL_1(M).
%% \]
%% \item
%% If $M$ is in $B\Mod^{(1)}$ then 
%% $\Sigma^{-1}\LL_0(M)$ is a free $B$-module and
%% \[
%% \LL_1(M)  \iso M \dirsum \Sigma^{-1}\LL_0(M).
%% \]
%% \end{itemize}
%% \end{corollary}

\begin{theorem}
\label{uniquetriangle}
Each $M \in St(B\Mod^{b})$  sits in a unique triangle
$M_0 \lra M \lra M_1$ with $M_i \in B\Mod^{(i)}$.  
Therefore, a $B$-module in $B\Mod^b$ is
uniquely 
determined, up to stable equivalence,
by a triple $(M_0,M_1,e(M))$, where $M_i\in B\Mod^{(i)}$
 and $e(M) \in \Ext_B^{1,0}(M_1,M_0)$.
\end{theorem}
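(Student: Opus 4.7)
The plan is to get existence from the canonical localization triangle of Definition~\ref{Lidefn} and to extract uniqueness by applying $\LL_0$ and $\LL_1$ to any competing triangle. Existence is essentially free: Proposition~\ref{Lsubiproperties} packages the diagram of Definition~\ref{Lidefn} into a triangle $\LL_0 M \lra M \lra \LL_1 M$ in $\St(B\Mod^b)$, and Theorem~\ref{Libasicprops} places $\LL_i M$ in $B\Mod^{(i)}$, so we may take $(M_0,M_1) = (\LL_0 M, \LL_1 M)$.

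For uniqueness, suppose $M_0 \llra{\alpha} M \llra{\beta} M_1$ is any triangle with $M_i \in B\Mod^{(i)}$. I would apply the exact, additive functor $\LL_0$, which therefore preserves triangles. Theorem~\ref{Libasicprops}(1) gives $\LL_0 M_1 \hequiv 0$, while part (4) gives $\LL_0 M_0 \hequiv M_0$ via the stable equivalence $\epsilon_{M_0}$. The image triangle therefore collapses, forcing $\LL_0 \alpha$ to be a stable equivalence. Chasing the naturality square of $\epsilon$ then identifies $\alpha$ with $\epsilon_M$ up to this equivalence; the symmetric argument with $\LL_1$ identifies $\beta$ with $\eta_M$. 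Hence the given triangle is stably equivalent, as a triangle, to the canonical one, and in particular $M_0 \hequiv \LL_0 M$ and $M_1 \hequiv \LL_1 M$.

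For the triple correspondence, I would invoke the general feature of the triangulated structure on $\St(B\Mod^b)$ recalled at the end of Section~2: a triangle is determined up to stable equivalence of its middle term by its outer vertices together with the extension cocycle in $\Ext^{1,0}_B(M_1,M_0)$. Conversely, any triple $(M_0,M_1,e) \in B\Mod^{(0)} \times B\Mod^{(1)} \times \Ext^{1,0}_B(M_1,M_0)$ determines a mapping cone triangle whose middle term $M$ has the prescribed localizations. Combined with existence and uniqueness, this yields the claimed bijection.

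The main obstacle is the uniqueness step, where one must identify not merely the outer objects but the triangle maps themselves. Naturality of $\eta$ and $\epsilon$ combined with the idempotence statements of Theorem~\ref{Libasicprops} reduce this to a diagram chase, but some care is needed because the given $\alpha$ and $\beta$ are not a priori natural and can only be matched with $\epsilon_M$ and $\eta_M$ after passing to the stable equivalences produced by $\LL_0$ and $\LL_1$.
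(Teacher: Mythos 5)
Your proof is correct and is essentially the same argument as the paper's: the paper packages the uniqueness step as a single $3\times 3$ commutative diagram whose rows are the canonical localization triangles of $M_0$, $M$, $M_1$ and whose columns are the images of the given triangle under $\LL_0$, identity, and $\LL_1$, while you apply $\LL_0$ and $\LL_1$ separately and chase naturality squares of $\epsilon$ and $\eta$ — the same content, presented in two passes rather than one diagram. (One small citation slip: $\LL_0 M_1 \hequiv 0$ for $M_1\in B\Mod^{(1)}$ is Theorem~\ref{Libasicprops}(5), not (1); part (1) only gives it after invoking (5) to replace $M_1$ by $\LL_1 M_1$.)
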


\begin{proof}
The diagram
\[
\xymatrix{
\LL_0 M_0
\ar^{\hequiv}[r]
\ar^{\hequiv}[d]
&
M_0
\ar[r]
\ar[d]
&
\LL_1 M_0 \hequiv 0
\ar[d]
\\
\LL_0 M
\ar^{\epsilon_M}[r]
\ar[d]
&
M
\ar^{\eta_M}[r]
\ar[d]
&
\LL_1 M
\ar^{\hequiv}[d]
\\
0 \hequiv \LL_0 M_1
\ar[r]
&
M_1
\ar^{\hequiv}[r]
&
\LL_1 M_1
\\
}
\]
shows that the triangle $M_0 \lra M \lra M_1$ is equivalent to the
canonical one, $\LL_0 M \lra M \lra \LL_1 M$.
\end{proof}

\begin{remark}
\label{generalstableequivalence}
Finally, it is clear that we can extend these definitions to
all $B$-modules.  
The fundamental triangle 
\[
\LL_0 M \llra{\epsilon_M} M \llra{\eta_M} \LL_1 M
\]
 then
implies that a homomorphism $f:M\lra N$ 
in $B\Mod$
is a stable equivalence 
iff both $\LL_0(f)$ and $\LL_1(f)$ are stable equivalences.
It shows, in particular, that $M$ is free iff both
$\LL_0 M$ and $\LL_1 M$ are free.

This criterion for equivalence is the same as 
that of Adams and Margolis for bounded below 
modules, but holds in full generality.
The example of Section~\ref{final} shows that this is a genuine generalization.
\end{remark}

%% \begin{remark}
%% Margolis notes \cite[Chapter 18]{Margolis} that $P_0$ and $\Sigma R$ are
%% idempotents, and
%% that there are $Q_0$ and $Q_1$ equivalences
%% $\Sigma R \lra \F_2$ and $\F_2 \lra P_0$,
%% at least in the category of $E(1)$-modules.  He also notes that in the category of
%% bounded-below modules, the maps must go in these directions.
%% \end{remark}

\section{$\Pic$ and $\Pic^{(k)}$}
\label{sec:Pic}

Again let $B$ be either $E(1)$ or $\cA(1)$ and $\St(\cC)$ the stable
category of a subcategory $\cC$ of $B\Mod$ (Definition~\ref{defred}).
Since $\F_2$ is the unit for tensor product in $B\Mod$, its localizations
$\LL_i\F_2$ are the units for tensor product in the local subcategories.

\begin{prop}
\label{units}
$\Sigma R$ is the unit for tensor product in $\St(B\Mod^{(0)})$
and 
$P_0$ is the unit for tensor product in $\St(B\Mod^{(1)})$.
The stable equivalence classes of modules  $M\in St(B\Mod^{(k)})$ 
or $\St(B\Mod^b)$ form a (possibly big) semi-ring with unit
under direct sum and tensor product.
\qqed
\end{prop}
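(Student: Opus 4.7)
The plan is to verify the three assertions in order, treating this proposition as a packaging of work already done in earlier sections. The unit property is immediate from Theorem~\ref{firstgeneral}: for $M \in B\Mod^{(0)}$ the map $\epsilon \tensor 1_M : \Sigma R \tensor M \lra M$ is a stable equivalence, and for $M \in B\Mod^{(1)}$ the map $\eta \tensor 1_M : M \lra P_0 \tensor M$ is a stable equivalence. Naturality in $M$ of both maps is the statement that $\Sigma R$ and $P_0$ act as identities up to stable equivalence on their respective local subcategories. Theorem~\ref{firstgeneral} is stated for $\cA(1)$, but I would observe that its proof uses only the short exact sequences of Proposition~\ref{key-sequence} together with Theorem~\ref{stableiso}, both of which remain valid over $E(1)$.

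Before even formulating a semi-ring structure on $\St(B\Mod^{(k)})$, I must check that this subcategory is closed under $\dirsum$ and $\tensor$. Closure under $\dirsum$ is immediate from the additivity of the functor $H(-,Q_i)$ together with the trivial observation that the direct sum of bounded-below modules is bounded-below. Closure under $\tensor$ follows from the K\"unneth isomorphism
\[
H(M\tensor N, Q_i) \iso H(M,Q_i) \tensor H(N,Q_i),
\]
since if both factors are $Q_k$-local the right-hand side vanishes for $i\neq k$. Bounded-belowness is preserved by $\tensor_{\F_2}$ of bounded-below graded modules.

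For the semi-ring axioms proper, associativity and commutativity of $\dirsum$ and of $\tensor$, and distributivity of $\tensor$ over $\dirsum$, all hold on the nose in $B\Mod$ by the symmetric monoidal abelian structure, and these identities pass verbatim to any quotient category, in particular to the stable module category. Combined with the unit property above (using $\F_2$ itself in $\St(B\Mod^b)$ and its localizations $\LL_k \F_2$ in the local subcategories), this yields the semi-ring with unit. The parenthetical \emph{possibly big} merely flags that the collection of stable equivalence classes may form a proper class rather than a set; it is not a genuine obstacle. Indeed, I do not expect any substantive obstacle in this proof: every statement reduces to a result established earlier or to general symmetric-monoidal nonsense.
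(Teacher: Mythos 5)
Your proposal is correct and matches the route the paper implicitly intends: the proposition is left without proof precisely because, as you say, it is a repackaging of earlier material (the unit property is Theorem~\ref{firstgeneral}, or equivalently parts (4) and (5) of Theorem~\ref{Libasicprops}; closure and the semi-ring axioms follow from the K\"unneth isomorphism and the fact that the symmetric monoidal structure descends to the stable category). You were also right to flag and resolve the small gap that Theorem~\ref{firstgeneral} is stated only for $\cA(1)$ — its proof, resting on Proposition~\ref{key-sequence} and Theorem~\ref{stableiso}, does carry over to $E(1)$ verbatim, and indeed Theorem~\ref{Libasicprops} already records the needed statement under the blanket hypothesis $B = E(1)$ or $\cA(1)$.
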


The Picard groups are the multiplicative groups in these semi-rings.

\begin{defn}
\label{defPic}
Let 
\begin{itemize}
\item $\widetilde{\Pic}(B) =
\left({\mathrm{Obj}}(\St(B\Mod^b))/{\!\!\hequiv}\right)^{\times}$
and 
\item $\Pic^{(k)}(B) =
\left({\mathrm{Obj}}(\St(B\Mod^{(k)}))/{\!\!\hequiv}\right)^{\times}$.
\end{itemize}
Let $\Pic(B)$ be the
subgroup of $\widetilde{\Pic}(B)$ whose elements are represented by
finitely generated modules.
\end{defn}

\begin{remark}
Of these, only $\Pic(B)$ is clearly a set.  We will show, by explicitly
calculating them, that $\Pic^{(0)}(B)$
and   $\Pic^{(1)}(B)$ are sets.
It would be interesting to know whether $\Pic(B) = \widetilde{\Pic}(B)$, and,
if not, how much larger $\widetilde{\Pic}(B)$ is.
\end{remark}

Adams and Priddy 
characterize the elements in $\Pic(B)$.
It is pertinent to recall that $H(M,Q_k)$ depends only upon the stable
isomorphism type of $M$.

\begin{lemma}\cite[Lemma 3.5]{BSO}
$M \in \Pic(B)$ iff
each $H(M,Q_k)$ is one dimensional.
\end{lemma}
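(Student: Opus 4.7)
The forward direction is a one-line consequence of the K\"unneth theorem for Margolis $Q_k$-homology: if $M \tensor N \hequiv \F_2$, then
\[
H(M,Q_k) \otimes_{\F_2} H(N,Q_k) \iso H(\F_2,Q_k) = \F_2,
\]
which forces each tensor factor to be one-dimensional over $\F_2$.

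For the converse, I would exploit the localization machinery developed above. Assume each $H(M,Q_k)$ is one-dimensional, and apply Theorem~\ref{uniquetriangle} to decompose $M$ through the fundamental triangle $\LL_0 M \lra M \lra \LL_1 M$. By Theorem~\ref{Libasicprops}, each $\LL_k M$ is $Q_k$-local with one-dimensional $Q_k$-homology and trivial $Q_j$-homology for $j\neq k$, so it suffices to show that each local piece is invertible in its stable local category.  Since $\Sigma R$ and $P_0$ are the units of $\St(B\Mod^{(0)})$ and $\St(B\Mod^{(1)})$ respectively (Proposition~\ref{units}), the task reduces to identifying $\LL_0 M$ with some suspension of $\Sigma R$ and $\LL_1 M$ with some $\Sigma^{a} P_b$. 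The inverse of $M$ can then be assembled from the inverses of these local pieces through the corresponding triangle, with preservation of tensor products under each $\LL_k$ (Theorem~\ref{Libasicprops}) verifying $M \tensor N \hequiv \F_2$.

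To effect each identification, I would pick a generator of $H(\LL_k M,Q_k)$, lift it to a cycle in $\LL_k M$, and construct an $\cA(1)$-module map between $\LL_k M$ and the appropriate suspension of the local unit, matching the $Sq^2$-action using the structural Theorems~\ref{onefold} and~\ref{fourthloops}. By construction any such map induces isomorphisms on both $Q_0$- and $Q_1$-homology, so Theorem~\ref{stableiso} promotes it to a stable equivalence.

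The main obstacle is precisely this identification step: producing an honest $\cA(1)$-linear map between $\LL_k M$ and a standard suspended local unit starting from only the abstract data of a one-dimensional Margolis homology. In the $Q_1$-local case it is especially delicate, because the four-fold periodicity of Theorem~\ref{fourthloops} means one must correctly select the pair $(a,b)$ so as to match the internal $\cA(1)$-module structure and not merely the degree of the chosen generator---a choice that is eventually reflected in the $\Z/(4)$ summand of $\Pic^{(1)}(\cA(1))$ described in the introduction.
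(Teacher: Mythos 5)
The forward direction is fine, but the converse has a genuine gap, and it is exactly at the step you flag as the ``main obstacle.'' Your argument never invokes the hypothesis that $M$ is finitely generated, which for an $8$-dimensional algebra like $\cA(1)$ forces $M$ to be finite-dimensional, and that hypothesis is indispensable: as the paper remarks immediately after this lemma, $P_0 \dirsum \Sigma R$ has one-dimensional $Q_0$- and $Q_1$-Margolis homology but is not invertible. Your proposed proof would go through verbatim for $P_0 \dirsum \Sigma R$ -- its localizations $\LL_0(P_0 \dirsum \Sigma R) \hequiv \Sigma R$ and $\LL_1(P_0 \dirsum \Sigma R) \hequiv P_0$ are indeed the units of the two local Picard groups -- so the argument proves too much. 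The reason it fails is precisely the phrase ``the inverse of $M$ can then be assembled from the inverses of these local pieces through the corresponding triangle'': modules with identical local pieces differ by an extension class in $\Ext^1(\LL_1 M, \LL_0 M)$, and invertibility is not detected by the local pieces alone (compare $\F_2$ versus $P_0\dirsum\Sigma R$, which share $(\LL_0,\LL_1)$ but have different extension classes, one invertible and one not). Nothing in your sketch selects the partner $N$ whose extension class makes $M \tensor N \hequiv \F_2$ rather than $P_0\dirsum\Sigma R$.

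There is a more direct route that uses the finite-generation hypothesis exactly where it is needed, and it is almost certainly what Adams and Priddy do in~\cite{BSO}: since $M$ is finitely generated over a finite-dimensional algebra, $M$ is finite-dimensional, so the linear dual $DM = \Hom(M,\F_2)$ is a bounded-below (indeed finite) module and the coevaluation map $\F_2 \lra M \tensor DM$ exists. One checks $H(DM,Q_k) \iso H(M,Q_k)^*$ and applies the K\"unneth isomorphism to see that coevaluation induces an isomorphism on $Q_0$- and $Q_1$-homology; Theorem~\ref{stableiso} (Adams--Margolis) then promotes it to a stable equivalence, so $DM$ is the inverse. This is also logically cleaner than your proposal, which would have to lean on Theorems~\ref{Yuslemma}, \ref{Yustheorem} and Propositions~\ref{PiczeroE}, \ref{PiczeroA} -- classification results proved only later in the paper -- to realize the identifications $\LL_0 M \hequiv \Sigma^a R$, $\LL_1 M \hequiv \Sigma^c P_d$, whereas the duality argument needs only the cited background.
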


Adams and Priddy remark that, if one drops the hypothesis of finite generation, then 
having $H(M,Q_k)$ one dimensional for each $k$ no longer implies that
$M$ is invertible.  The module $P_0 \dirsum \Sigma R$ is an example.
The other direction does hold in general, though.

\begin{lemma}
\label{onedim}
 If $M \in \Pic^{(k)}(B)$ then
$H(M,Q_k)$ is one dimensional.
\end{lemma}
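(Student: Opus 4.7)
The plan is to use invertibility directly together with the Künneth formula for Margolis homology. Since $M \in \Pic^{(k)}(B)$, there is a module $N \in B\Mod^{(k)}$ together with a stable equivalence $M \tensor N \hequiv \LL_k \F_2$, because $\LL_k \F_2$ is the unit of the $Q_k$-local tensor structure (Proposition~\ref{units}). Applying the Künneth isomorphism for $Q_k$-homology gives
\[
H(M,Q_k) \tensor_{\F_2} H(N,Q_k) \iso H(M \tensor N, Q_k) \iso H(\LL_k \F_2, Q_k),
\]
and since $H(-,Q_k)$ is a stable invariant, the right-hand side is computed from $\Sigma R$ when $k=0$ and from $P_0$ when $k=1$.

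Next I would verify, by direct inspection of the pictures of $\Sigma R$ and $P_0$ given just after the definition of these modules, that $H(\Sigma R,Q_0)$ and $H(P_0,Q_1)$ are each one-dimensional over $\F_2$. For $\Sigma R$ this amounts to seeing that the bottom class (in degree $0$ after suspension) survives $Q_0$-homology while everything above it cancels in pairs under $Sq^1$, which is already implicit in Proposition~\ref{key-sequence}, where $\Sigma R$ is recognized as $\LL_0\F_2$ and thus carries the same $Q_0$-homology as $\F_2$ (Theorem~\ref{Libasicprops}(2)); analogously $\eta_{\F_2}:\F_2 \lra P_0$ induces an isomorphism on $Q_1$-homology (Theorem~\ref{Libasicprops}(3)), so $H(P_0,Q_1) \iso \F_2$ is concentrated in a single degree.

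Finally, having reduced to an equation of graded $\F_2$-vector spaces
\[
H(M,Q_k) \tensor_{\F_2} H(N,Q_k) \iso \F_2
\]
in which the right-hand side has total dimension one, the conclusion is forced: both tensor factors must have total dimension one, since $(\dim H(M,Q_k))\cdot (\dim H(N,Q_k)) = 1$ in $\Z$. In particular $H(M,Q_k)$ is one-dimensional, as required.

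The main obstacle, which is actually minor here, is justifying the Künneth isomorphism for Margolis homology and its compatibility with stable equivalence; these are standard properties of $H(-,Q_k)$ recorded in Margolis and already invoked freely in this paper (for instance in the proof of Theorem~\ref{Libasicprops}), so no new work is needed. Everything else is a one-line dimension count.
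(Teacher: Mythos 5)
Your proof is correct. The paper states this lemma without proof, treating it as an immediate analogue of the Adams--Priddy result quoted just above it (\cite[Lemma 3.5]{BSO}), and your K\"unneth-plus-dimension-count argument is precisely the expected justification: invertibility gives $M\tensor N\hequiv \LL_k\F_2$, the K\"unneth isomorphism for $Q_k$-homology together with stable invariance (Theorem~\ref{Libasicprops}(2),(3)) forces $H(M,Q_k)\tensor H(N,Q_k)\iso\F_2$, and the dimension count concludes.
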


The converse, Corollary~\ref{onedimconverse},
will follow from the calculations of $\Pic^{(k)}$,
since those
calculations will show that if $M \in B\Mod^{(k)}$ and $H(M,Q_k)$ is one dimensional,
then $M$ is stably isomorphic to an invertible module. 

After characterizing the invertible $B$-modules,
Adams and Priddy go on to compute $\Pic(E(1))$ and $\Pic(\cA(1))$.

\begin{theorem}\cite[Theorem 3.6]{BSO}
\label{AdamsandPriddyE}
$\Pic(E(1)) = \Z \dirsum \Z$, generated by $\Sigma \F_2$ and the
augmentation ideal $\Omega \F_2 = \Ker(E(1) \lra \F_2)$.
\end{theorem}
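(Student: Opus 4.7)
The plan is to show that the homomorphism
$\phi: \Z \dirsum \Z \lra \Pic(E(1))$, $(a,b) \mapsto [\Sigma^a \Omega^b \F_2]$,
is an isomorphism. That $\phi$ is a well-defined group homomorphism is routine:
$\Sigma$ and $\Omega$ commute with $\tensor$ in the stable category, so
$\Sigma^a \Omega^b \F_2 \tensor \Sigma^{a'}\Omega^{b'}\F_2 \hequiv
\Sigma^{a+a'}\Omega^{b+b'}\F_2$,
and $\Omega\F_2$ is invertible with inverse $\Omega^{-1}\F_2$ by the
general properties of algebraic loops in the stable module category.

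For injectivity, I would introduce the Margolis ``degree'' homomorphism
$\psi: \Pic(E(1)) \lra \Z \dirsum \Z$, $M \mapsto (d_0(M), d_1(M))$,
where $d_k(M)$ is the unique degree in which the one-dimensional $H(M,Q_k)$
is concentrated (well-defined by Lemma~\ref{onedim} and a homomorphism
by the K\"unneth theorem for Margolis homology). Direct inspection of the
augmentation ideal $\Omega\F_2 = \langle Q_0, Q_1, Q_0 Q_1\rangle$ yields
$\psi(\Omega\F_2) = (1,3)$, while $\psi(\Sigma\F_2) = (1,1)$, so
$\psi\circ\phi(a,b) = (a+b,\, a+3b)$ has matrix of determinant $2$ and is
injective; hence $\phi$ is injective.

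Surjectivity is the main obstacle. It reduces to two claims: (i) $\psi$ is itself
injective, and (ii) the image of $\psi$ equals the sublattice
$\Z(1,1) + \Z(1,3) \subset \Z\dirsum\Z$. For (i), given $M$ with $\psi(M) = (0,0)$,
both $H(M,Q_0)$ and $H(M,Q_1)$ sit in degree $0$, so after adjusting by
$Q_0$- and $Q_1$-boundaries I would locate a single cycle $m \in M_0$
representing both generators, yielding an $E(1)$-map $\F_2 \lra M$ which
induces isomorphisms on both Margolis homologies; Theorem~\ref{stableiso}
then upgrades this to a stable equivalence. Claim (ii) is the genuinely
delicate step, and amounts to the parity constraint
$d_1(M) \equiv d_0(M) \pmod 2$. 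I would attack it via Poincar\'e series:
regarding $M$ as a module over the exterior subalgebra on $Q_0$ gives
$P_M(t) \equiv t^{d_0} \pmod{1+t}$, regarding it over the exterior subalgebra
on $Q_1$ gives $P_M(t) \equiv t^{d_1} \pmod{1+t^3}$, and combining these
with the invertibility relation
$P_M(t)\,P_{M^{-1}}(t) \equiv 1 \pmod{(1+t)(1+t^3)}$ should force the
required parity. A fallback is a direct classification: since $E(1)$ has
small Loewy length, one can enumerate the indecomposable reduced
$E(1)$-modules with one-dimensional $Q_0$- and $Q_1$-homology and verify
the parity case by case. Once (i) and (ii) hold, $\phi$ is a bijection
and the theorem follows.
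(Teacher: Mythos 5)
The paper gives no proof of this statement; it is simply cited from Adams and Priddy, so I am comparing your proposal against what a correct proof must contain rather than against an argument in the text. Your overall architecture is sound and in the right spirit: the map $\psi = (d_0,d_1)$ is the natural detector, your computation $\psi(\Sigma\F_2)=(1,1)$, $\psi(\Omega\F_2)=(1,3)$ is correct, and the determinant argument settles injectivity of $\phi$. Your step (ii) also works, and is in fact simpler than you anticipate: once you know $P_M(t)\equiv t^{d_0}\pmod{1+t}$ and $P_M(t)\equiv t^{d_1}\pmod{1+t^3}$, you need only evaluate the (Laurent) polynomial $P_M$ at $t=-1$, since $1+t$ divides $1+t^3$; this gives $(-1)^{d_0}=P_M(-1)=(-1)^{d_1}$ directly, with no appeal to the invertibility relation.

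The genuine gap is in step (i). Your plan is to ``adjust by $Q_0$- and $Q_1$-boundaries'' to locate a single class $m\in M_0$ that is a cycle for both differentials and nontrivial in both Margolis homologies. This adjustment cannot work as stated. Taking $M$ reduced means precisely that $Q_0Q_1$ acts as zero on $M$. Hence if $m_0\in M_0$ is a $Q_0$-cycle generating $H(M,Q_0)$ and $Q_1m_0\neq 0$, then for every $z\in M_{-1}$ one has $Q_1(m_0+Q_0z)=Q_1m_0+Q_1Q_0z=Q_1m_0\neq 0$: replacing $m_0$ by another representative of the same $Q_0$-homology class never changes its image under $Q_1$. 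Thus if $Q_1m_0\neq 0$, the composite $\Ker Q_0\cap\Ker Q_1\cap M_0\to H(M,Q_0)_0$ misses the generator, and no single class of the sort you want exists. The real content of (i) is therefore to \emph{rule out} the possibility $Q_1m_0\neq 0$, and your sketch assumes away exactly this. To close the gap you need a separate argument. Two options consistent with the rest of the paper: (a) the infinite-descent technique used in Theorem~\ref{Yuslemma} and Proposition~\ref{PiczeroE}, exploiting bounded-below-ness together with the absence of Margolis homology in the intermediate degrees to show the hypothetical $Q_1m_0\neq 0$ forces an infinite chain downward; or (b) the localization approach: by Proposition~\ref{PiczeroE} and Theorem~\ref{Yuslemma} one has $\LL_0M\simeq\Sigma R$ and $\LL_1M\simeq P_0$, so by Theorem~\ref{uniquetriangle} the module $M$ is determined by its class in $\Ext^{1,0}_{E(1)}(P_0,\Sigma R)$; one checks this group is $\F_2$, the split extension $\Sigma R\oplus P_0$ is not invertible (it is idempotent but not equal to $\F_2$), so $M\simeq\F_2$. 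Your fallback of direct classification over $E(1)$ would also succeed, but your primary argument for (i) does not.
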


\begin{theorem}\cite[Theorem 3.7]{BSO}
\label{AdamsandPriddy}
$\Pic(\cA(1)) = \Z \dirsum \Z \dirsum \Z/(2)$, generated by $\Sigma \F_2$, the
augmentation ideal $\Omega \F_2 = \Ker(\cA(1) \lra \F_2)$, and $\Sigma^{-4} M_2$.
\end{theorem}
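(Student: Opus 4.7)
The plan is to reduce the problem to the local Picard group calculations $\Pic^{(0)}(\cA(1)) = \Z$ and $\Pic^{(1)}(\cA(1)) = \Z \oplus \Z/(4)$ announced in the introduction, together with the monomorphism $\Pic(\cA(1)) \hookrightarrow \Pic^{(0)}(\cA(1)) \oplus \Pic^{(1)}(\cA(1))$ from Section 5, and then to identify which elements of the target come from finitely generated modules.

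First, I would check that each of $\Sigma\F_2$, $\Omega\F_2$, and $\Sigma^{-4} M_2$ lies in $\Pic(\cA(1))$ by verifying the characterization from the previous lemma: each has one-dimensional Margolis homology at both $Q_0$ and $Q_1$. For $\Sigma\F_2$ this is trivial; for $\Omega\F_2$ it follows from the short exact sequence $0 \to \Omega\F_2 \to \cA(1) \to \F_2 \to 0$ and the fact that $\cA(1)$ has trivial Margolis homology; for $\Sigma^{-4} M_2 = \Sigma^{-2}\cA(1)/(Sq^3)$ it follows by direct inspection of the diagram in Proposition~\ref{descrMi}.

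Next, I would compute the images of the three generators under the localization map. The element $\Sigma\F_2$ hits the suspension generator on each factor. The augmentation ideal $\Omega\F_2$ hits a shift and loop of each unit, accounting for an independent $\Z$ summand. For $\Sigma^{-4}M_2$, a direct tensor-product computation should show that $(\Sigma^{-4}M_2)^{\tensor 2}$ is stably equivalent to a combination of suspensions and loops of $\F_2$, while $\Sigma^{-4}M_2$ itself is not. The key intermediate fact is that $\LL_1(\Sigma^{-4}M_2)$ represents the unique element of order $2$ inside $\Z/(4) \subset \Pic^{(1)}(\cA(1))$, which can be read off from the position of $M_2$ in the periodic resolution of Proposition~\ref{fourfoldperiodicity} (it is $\Omega^2$ of the unit of $\Pic^{(1)}$, up to suspension and free summands). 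This tells us the image is exactly $\Z \oplus \Z \oplus \Z/(2)$.

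Finally, to prove surjectivity onto $\Pic(\cA(1))$ (rather than its image sitting inside a larger group), I would use finite generation to exclude further elements. An element of $\Pic(\cA(1))$ is determined by its image in $\Pic^{(0)} \oplus \Pic^{(1)}$, and its first two coordinates are captured by the bidegree of its Margolis homologies via the map $M \mapsto (\deg H(M,Q_0), \deg H(M,Q_1))$; this gives a homomorphism to $\Z^2$ surjecting onto the sublattice spanned by the invariants of $\Sigma\F_2$ and $\Omega\F_2$. In the kernel of this map, the embedding into $\Pic^{(1)}$ restricts the contribution of the $\Z/(4)$ to its index-$2$ subgroup, because the odd $\Omega$-powers of $P_0$ are not finitely generated and cannot be \emph{corrected} to finite modules by a $Q_0$-local extension (cf.\ Theorem~\ref{uniquetriangle}).

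The main obstacle is this last finite-generation argument: one must show that among the four cosets of $\Z$ inside $\Pic^{(1)}(\cA(1))$, only two admit a lift to a finitely generated $\cA(1)$-module. In effect this requires checking that for an odd index $i$, no bounded-below finitely generated module $M$ satisfies $\LL_1 M \hequiv \Sigma^i P_i$, which comes down to a careful unpacking of the cell structures in Figure~\ref{modules} and the relation $\Omega P_n \hequiv \Sigma P_{n+1}$ from Theorem~\ref{multmain}.
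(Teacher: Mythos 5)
This theorem is a \emph{recollection}: the paper cites it as \cite[Theorem 3.7]{BSO} (Adams--Priddy) and gives no proof of its own, so there is no ``paper proof'' to compare against. That said, the proposed argument has two genuine problems worth naming. First, it is circular as framed: you want to use the monomorphism $\LL : \Pic(\cA(1)) \lra \Pic^{(0)}(\cA(1)) \dirsum \Pic^{(1)}(\cA(1))$ of Section~\ref{sec:Pichom}, but Proposition~\ref{picaone} establishes that monomorphism precisely by writing out the matrix of $\LL$ with respect to the Adams--Priddy basis $\{\Sigma\F_2,\Omega\F_2,J\}$ and checking its kernel. So the injectivity of $\LL$ as the paper proves it already presupposes Theorem~\ref{AdamsandPriddy}. (There is an independent route to injectivity lurking in Theorem~\ref{idempotents} via $\Ext_{\cA(1)}^{1,0}(P_0,\Sigma R)=\F_2$ and Theorem~\ref{uniquetriangle}, but your write-up does not invoke it, and it would still need to be adapted to compare two arbitrary Picard elements.)

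Second, the image-identification step is wrong as stated. You say the finite-generation constraint should force the $\Z/(4)$-coordinate into its index-$2$ subgroup. But $\Omega\F_2$ itself has $\LL_1(\Omega\F_2) \hequiv \Omega P_0 \hequiv \Sigma P_1$, whose $t_1$-invariant is $\overline{1}$, an \emph{odd} element of $\Z/(4)$; so odd cosets are certainly hit by finitely generated invertibles. Nor does ``$\Omega^i P_0$ is not finitely generated'' distinguish odd $i$ from even $i$ --- no $\Omega^i P_0$ is finitely generated. What actually has to be shown is a structural classification: which triples $(\Sigma^{s_0}R,\ \Sigma^{s_1}P_b,\ e)$ with $e\in\Ext^{1,0}_{\cA(1)}(\Sigma^{s_1}P_b,\Sigma^{s_0}R)$ yield finitely generated modules, and that stable classes of such modules biject with the claimed $\Z\dirsum\Z\dirsum\Z/(2)$. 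That is essentially the content of the Adams--Priddy theorem itself, which they prove by a direct analysis of finitely generated $\cA(1)$-modules with one-dimensional Margolis homologies, not by localizations. As written, your proposal acknowledges this as ``the main obstacle'' but does not close it, so there is a real gap, and the route chosen cannot be made non-circular without substantially reworking the logical order of the paper's Sections~\ref{sec:Pic}--\ref{sec:idempotents}.
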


The module $J = \Sigma^{-4} M_2$ is known as the `joker' for its role as a torsion
element in $\Pic(\cA(1))$ and for the resemblance of its diagrammatic depiction
(Figure~\ref{modules}) to a traditional jester's hat.

We now turn to the determination of the local $\Pic$ groups.
In his thesis (\cite{Yu}), Cherng-Yih Yu computed $\Pic^{(1)}$ for
both $E(1)$ and $\cA(1)$.    His calculation of $\Pic^{(1)}(E(1))$
is easy, and we give it now.
His calculation of $\Pic^{(1)}(\cA(1))$ is quite
complicated and computational.
In the next section, we give a simpler and more straightforward
calculation of it.
Following that, we compute
$\Pic^{(0)}$ for both $E(1)$ and $\cA(1)$.

Recall that, as an $E(1)$-module,
$P_i \simeq \Sigma^{2i} P_0$ (see Remark~\ref{error}).

\begin{theorem} \cite[Lemma 2.5]{Yu}
\label{Yuslemma}
If $M \in E(1)\Mod^{(1)}$ and $H(M,Q_1) = \Sigma^s \F_2$ then
$M \simeq \Sigma^{s}P_0$.  Therefore,
$\Pic^{(1)}(E(1)) = \{\Sigma^i P_0\} \iso \Z$.
\end{theorem}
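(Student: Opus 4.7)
Without loss of generality $s = 0$; by Proposition~\ref{stableisored} the claim reduces to showing $M^\red \iso P_0$ as $E(1)$-modules. My plan is to first reduce to a structural statement about reduced modules and then classify them by hand. Observe first that a reduced $E(1)$-module $N$ must satisfy $Q_0 Q_1 N = 0$: if $Q_0 Q_1 n \neq 0$ for some $n$, then $\{n, Q_0 n, Q_1 n, Q_0 Q_1 n\}$ spans a submodule isomorphic to $E(1)$, which, $E(1)$ being self-injective, splits off as a direct summand, contradicting reducedness. Since $M^\red$ is bounded-below and $Q_0$-acyclic (free summands do not affect Margolis homology), as an $\cA(0)$-module it is free; pick a graded vector space $A \subseteq M^\red$ complementary to $\Ker Q_0$, so $M^\red \iso \cA(0) \tensor A$. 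Combined with $Q_0 Q_1 = 0$, the $Q_1$-action on $M^\red$ is determined by a single degree-$2$ operator $\tilde Q_1 : A \lra A$ via $Q_1 a = Q_0 \tilde Q_1 a$ for $a \in A$; note that $Q_1(Q_0 a) = Q_0 Q_1 a = 0$ automatically, and $Q_1^2 = 0$ is immediate.

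A direct computation then yields, in each degree $n$,
\[
H(M^\red, Q_1)_n \iso \Ker(\tilde Q_1 : A_n \lra A_{n+2}) \dirsum \Cok(\tilde Q_1 : A_{n-3} \lra A_{n-1}).
\]
The hypothesis that this equals $\F_2$ in degree $0$ and vanishes elsewhere, combined with $A$ being bounded-below, forces a unique structure. The cokernel condition immediately above the bottom of $A$ pins the bottom degree of $A$ to $-1$ with $A_{-1}$ one-dimensional; continuing degree-by-degree, the kernel conditions force $\tilde Q_1$ to be injective wherever its source is nonzero, and the cokernel conditions force $\tilde Q_1$ to be surjective onto every nonzero target one step up. A short inductive argument then shows that $A$ is one-dimensional in each odd degree $\geq -1$ and zero elsewhere, with $\tilde Q_1$ carrying each generator to the next. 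This is exactly the $\cA(0)$-structure of $P_0$ (whose $A$ is spanned by $x^{-1}, x^1, x^3, \ldots$ with $Q_1 x^{2k-1} = Q_0 x^{2k+1}$), so $M^\red \iso P_0$.

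The assertion $\Pic^{(1)}(E(1)) = \{\Sigma^i P_0\} \iso \Z$ then follows at once. Any $M \in \Pic^{(1)}(E(1))$ has one-dimensional $Q_1$-homology by Lemma~\ref{onedim}, hence $M \hequiv \Sigma^s P_0$ for some $s$ by what we just proved; conversely each $\Sigma^i P_0$ is invertible, with inverse $\Sigma^{-i} P_0$, since $P_0 \tensor P_0 \hequiv P_0$ by Theorem~\ref{firstgeneral}, and distinct $i$ give distinct stable equivalence classes since the $Q_1$-homologies are supported in different degrees. The main obstacle is the inductive degree-by-degree classification determining $A$; it is purely mechanical but depends crucially on the bounded-below hypothesis to start the induction, foreshadowing the counterexample of the Laurent series module discussed later in the paper.
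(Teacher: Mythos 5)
Your proof is correct, and it takes a genuinely different route from the paper's. The paper argues by a direct element-chase: it picks a cycle $x$ representing the generator of $H(M,Q_1)$, splits into cases according to whether $Sq^1 x=0$, and in one case inductively builds a descending chain contradicting bounded-belowness, while in the other it builds an ascending chain exhibiting a copy of $P_0$ inside $M$ whose inclusion induces the stable equivalence. You instead organize the same information structure-theoretically: $Q_0$-acyclicity plus bounded-belowness make $M^\red$ free as an $\cA(0)$-module on a graded vector space $A$, the reduced-ness relation $Q_0 Q_1=0$ packages the $Q_1$-action into a single degree-$2$ operator $\tilde Q_1$ on $A$, and the $Q_1$-Margolis homology is then read off as $\Ker(\tilde Q_1)\dirsum\Cok(\tilde Q_1)$ with the appropriate degree shifts. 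The one-dimensionality hypothesis then pins down $A$ and $\tilde Q_1$ degree-by-degree. Both arguments depend essentially on bounded-belowness to anchor the induction, exactly as you note. Your version is cleaner as a \emph{classification} (it visibly identifies the full isomorphism type of $M^\red$ rather than just producing a stable equivalence with $P_0$) and is well suited to generalization, while the paper's version is more elementary and runs in parallel with the subsequent, harder $\cA(1)$ case (Section~\ref{sec:proofofYustheorem}), where the cocycle data for the $Sq^2$-action is substantially more complicated than a single operator $\tilde Q_1$. Your treatment of the $\Pic^{(1)}(E(1))$ consequence matches the paper's exactly.
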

 
\begin{proof}
By suspending appropriately, we may assume that
$M \in E(1)\Mod^{(1)}$ and $H(M,Q_1) = \F_2$.
We may also assume that $M$ is reduced, i.e., $Q_1 Q_0 = 0$.
Let $0 \neq \langle[x]\rangle = H(M,Q_1)$, so that $Q_1(x) = 0$ and $x\notin \Image(Q_1)$.
There are two possibilities:
\begin{enumerate}
\item $Sq^1 x \neq 0$
\item $Sq^1 x = 0$
\end{enumerate}
In the first case, $Q_1 Sq^1 x = 0$ because $M$ is reduced, so $Sq^1 x = Q_1 x_1$
for some $x_1$.  (This because $[x]$ is the only nonzero $Q_1$ homology class of $M$.)
Again,  $M$  reduced implies that
$x_1 \notin \Image(Sq^1)$,  so that $Sq^1 x_1 \neq 0$.  
Since $Q_1 Sq^1 x_1 = 0$, we have $Sq^1 x_1 = Q_1 x_2$ for
some $x_2$.  Continuing in this way,
it follows by induction that
$M$ is not bounded-below, contrary to our assumption.

Therefore, we must have $Sq^1x = 0$.
Then $x= Sq^1 x_0$ for some $x_0$ and $x_0 \notin \Image(Q_1)$ because $M$
is reduced.  Hence $Q_1x_0 \neq 0$.  Again, the fact that $M$ is reduced implies
that $Q_1x_0 = Sq^1x_1$ for some $x_1$.  For induction, we may suppose that
we have found a sequence of elements $x_i$ such that 
$Q_1 x_{i-1} = Sq^1x_i \neq 0$, for $0 \leq i \leq n$.  
Then, since $M$ is reduced, $x_n\notin \Image(Q_1)$, so $Q_1x_n \neq 0$
and there must be $x_{n+1}$ such that $Q_1x_{n+1} = Sq^1 x_n$.

The submodule of $M$ generated by the $x_i$ is isomorphic to $P_0$ and the inclusion
$P_0 \lra M$ induces an isomorphism of $Q_k$ homologies, hence is a stable isomorphism
by Theorem~\ref{stableiso}.

Now suppose that $M\in \Pic^{(1)}(E(1))$.  By Lemma~\ref{onedim},
$H(M,Q_1) = \Sigma^s\F_2$ for some $s$,
and therefore $M \simeq \Sigma^s P_0$.
Finally, observe that the $\Sigma^i P_0$ are all distinct because 
$H(\Sigma^i P_0,Q_1) = \Sigma^i \F_2$.  
\end{proof}

Here is the result for $\cA(1)$.

\begin{theorem} \cite[Theorem 2.1]{Yu}
\label{Yustheorem}
If $M \in \cA(1)\Mod^{(1)}$ and $H(M,Q_1) = \Sigma^a \F_2$ then
$M \simeq \Sigma^{a-2b}P_b$ for some $b$.  Therefore,
$\Pic^{(1)}(\cA(1)) = \{\Sigma^i P_n\} \iso \Z \dirsum \Z/(4)$ with 
$(a,b) \in \Z \dirsum \Z/(4)$ corresponding to $\Sigma^{a-2b} P_b$.
\end{theorem}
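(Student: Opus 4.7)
The plan is to prove the theorem in two parts: (A) the map $\Phi \colon \Z \dirsum \Z/4 \lra \Pic^{(1)}(\cA(1))$ defined by $\Phi(a,b) = [\Sigma^{a-2b} P_b]$ is a well-defined injective group homomorphism whose image lies in the invertibles, and (B) every element of $\Pic^{(1)}(\cA(1))$ lies in its image.

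Part (A) will follow quickly from the results already in hand. Well-definedness modulo $4$ in the second factor uses $P_{b+4} \iso \Sigma^8 P_b$ (Theorem \ref{multmain}), giving $\Sigma^{a-2(b+4)}P_{b+4} \iso \Sigma^{a-2b}P_b$. The homomorphism property uses $P_b \tensor P_c \hequiv P_{b+c}$ from the same theorem, and invertibility follows from $P_b \tensor P_{-b} \hequiv P_0$ together with Proposition \ref{units}, which identifies $P_0$ as the unit of $\St(\cA(1)\Mod^{(1)})$. For injectivity, the $Q_1$-homology $H(\Sigma^{a-2b}P_b, Q_1) = \Sigma^a \F_2$ recovers $a$, while Figure \ref{modules} exhibits the four reduced representatives $P_0, \ldots, P_3$ as pairwise non-isomorphic $\cA(1)$-modules, hence non-stably-equivalent by Proposition \ref{stableisored}.

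The main content is (B). Given $M \in \cA(1)\Mod^{(1)}$ with $H(M, Q_1) = \Sigma^a \F_2$ (one-dimensional by Lemma \ref{onedim}), my plan is to construct an $\cA(1)$-homomorphism $\Sigma^{a-2b}P_b \lra M$ for some $b \in \Z/4$ that induces an isomorphism on $Q_1$-homology; since both source and target are $Q_1$-local, hence have trivial $Q_0$-homology, Theorem \ref{stableiso} will automatically upgrade this to a stable equivalence. To identify $b$, I would first apply Yu's $E(1)$-lemma (Theorem \ref{Yuslemma}) to obtain a stable equivalence $M^{\red} \hequiv \Sigma^a P_0$ in $E(1)\Mod^b$, which produces an explicit $E(1)$-basis of $M^{\red}$ on which $Sq^1$ and $Q_1$ act as they do on the corresponding suspension of $P_0$. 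The $\cA(1)$-structure on $M^{\red}$ is then determined by the $Sq^2$-action on this basis, which is heavily constrained by the Adem relation $Sq^2 Sq^2 = Sq^3 Sq^1$ together with the vanishing of $Q_0$-homology.

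The main obstacle will be classifying these $Sq^2$-extensions. My plan is to analyze $M^{\red}$ from its bottom degree: take a lowest-degree nonzero element $x$ and examine the finite $\cA(1)$-submodule it generates. Reducedness, $Q_1$-locality, and the $E(1)$-structure near the bottom will force this submodule to match one of the four patterns visible in Figure \ref{modules}, indexed by $b \in \{0,1,2,3\}$ (compare the modules $M_b$ of Proposition \ref{descrMi}). Once $b$ is pinned down, the short exact sequence $0 \lra M_b \lra P_b \lra \Sigma^4 R \lra 0$ of Theorem \ref{main} provides the mechanism to extend the bottom comparison to a map $\Sigma^{a-2b}P_b \lra M$ realizing an isomorphism on $Q_1$-homology, which as noted completes the argument via Theorem \ref{stableiso}.
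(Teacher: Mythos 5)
Part (A) is fine and matches the paper's brief reasoning: injectivity via $d_1$ and the pairwise non-isomorphism of $P_0,\dots,P_3$, the group structure from Theorem~\ref{multmain}, and the unit from Proposition~\ref{units}.

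Part (B) is where the real content lies, and here your proposal sketches the right strategy (the same one the paper uses in Section~\ref{sec:proofofYustheorem}) but leaves the crux unproved. After applying Theorem~\ref{Yuslemma} you get $M|_{E(1)} \iso P_0 \dirsum (E(1)\tensor V)$ for some graded vector space $V$; note that even after passing to the $\cA(1)$-reduced part $M^{\red}$, this $E(1)$-free summand $E(1)\tensor V$ may be nonzero, so there is no ``$E(1)$-basis of $M^\red$ matching $\Sigma^a P_0$'' without accounting for $V$. The entire difficulty is showing $V$ is at most one-dimensional and concentrated in one of exactly two degrees (corresponding to the joker $\Sigma^{-4}M_2$ or to $\Sigma^{-6}M_3$), with no possibility of free $E(1)$-generators floating in arbitrary higher degrees. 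Your phrase ``reducedness, $Q_1$-locality, and the $E(1)$-structure near the bottom will force this submodule to match one of the four patterns'' simply asserts the conclusion. In the paper this is established only after a fairly long cocycle analysis (parametrizing $Sq^2$ by maps $s,u,v,w$ on $V$ and sequences $a_i,b_i,c_i,d_i$, then grinding through the relations $Q_1 = Sq^1Sq^2 + Sq^2Sq^1$ and $Sq^2Sq^2 = Sq^1Q_1$ in Lemmas~\ref{xconditions}--\ref{Vconditions} and the two succeeding lemmas), and nothing in your outline substitutes for that computation. Moreover, a bottom-degree-only argument cannot in principle detect the possibility that $V$ has a generator in some high degree: the submodule generated by the lowest class could be $M_0 = \F_2$ even when $M \not\iso P_0$ if a high-degree $V$ were permitted, and ruling that out is a global constraint, not a local one at the bottom.

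The final step also has an unaddressed obstruction: even granting that the bottom submodule is $M_b$, extending $M_b \hookrightarrow M$ to a map $\Sigma^{a-2b}P_b \lra M$ requires showing that the class of $M/M_b$ in $\Ext^{1}_{\cA(1)}$ is compatible with the extension $0 \lra M_b \lra P_b \lra \Sigma^4 R \lra 0$ of Theorem~\ref{main}. You would need to produce the lift degree by degree (compare the inductive construction in the proof of Proposition~\ref{PiczeroA}) or invoke the uniqueness-of-extension claim in Theorem~\ref{main} after identifying the quotient; as written, the extension is asserted, not constructed. In short: the architecture is right, but the theorem's proof lives entirely inside the two steps you have left as ``will force'' and ``provides the mechanism.''
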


\begin{proof} 
The first statement is the key technical result, and will be
given as
Theorem~\ref{thm:proofofYustheorem} in the next section.  For the
remainder,
suppose that $M\in \Pic^{(1)}(\cA(1))$.  By Lemma~\ref{onedim},
the first statement applies to show that $M = \Sigma^i P_n$ for some
$i$ and $n$.  The multiplicative structure then follows from Theorem~\ref{multmain}.
\end{proof}

\section{The proof of Yu's Theorem}
\label{sec:proofofYustheorem}

\begin{theorem}
\label{thm:proofofYustheorem}
If $M \in \cA(1)\Mod^{(1)}$ and $H(M,Q_1) = \Sigma^a \F_2$ then
$M \simeq \Sigma^{a-2b}P_b$ for some $b$. 
\end{theorem}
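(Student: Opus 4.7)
The plan is to bootstrap Yu's $E(1)$-result (Theorem~\ref{Yuslemma}) to the $\cA(1)$-case, carefully tracking how the $Sq^2$-action can sit atop the $E(1)$-module structure. First I would suspend to reduce to $a = 0$ and use Proposition~\ref{reduced} to assume $M$ is $\cA(1)$-reduced. Viewing $M$ as an $E(1)$-module and applying Theorem~\ref{Yuslemma} yields elements $x_0, x_1, x_2, \ldots \in M$ with $|x_i| = 2i - 1$, such that $Sq^1 x_0$ generates $H(M, Q_1)$ and $Q_1 x_i = Sq^1 x_{i+1}$. These span an $E(1)$-submodule $N \cong P_0$, and the inclusion is an $E(1)$-stable equivalence by Theorem~\ref{stableiso}. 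Since $P_0$ is $E(1)$-reduced, Proposition~\ref{stableisored} then gives a splitting $M \cong N \oplus F$ as $E(1)$-modules, with $F$ free over $E(1)$.

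Next I would analyze the $Sq^2$-action. Writing $Sq^2 x_i = c_i x_{i+1} + f_i$ with $c_i \in \F_2$ and $f_i \in F$, the Adem relation $Sq^2 Sq^2 = Sq^3 Sq^1$, combined with the calculation $Sq^3 Sq^1 x_i = Sq^1 Q_1 x_i = Sq^1 Sq^1 x_{i+1} = 0$, forces $c_i c_{i+1} = 0$ and $Sq^2 f_i = c_i f_{i+1}$ in $F$. I would then inductively correct each $x_i$ by some $g_i \in F$ (using $H(M, Q_0) = 0$ to guarantee the needed preimages) so that the corrected generators, together with any required generators contributed by the $f_i$'s, span a sub-$\cA(1)$-module $N' \subseteq M$ preserving all Margolis homologies, and hence stably equivalent to $M$ by Theorem~\ref{stableiso}. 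Finally I would compare $N'$ to the explicit descriptions of $P_0, P_1, P_2, P_3$ in Theorem~\ref{main} and Figure~\ref{modules}: using the four-fold periodicity $\Omega^4 \simeq \Sigma^{12}$ of Theorem~\ref{fourthloops} to organize stable equivalence classes into $\Z/(4)$-orbits, I expect $M \simeq \Sigma^{-2b} P_b$ for a unique $b \in \Z/(4)$.

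The hard part will be the inductive correction step and the ensuing case analysis. I need to show the $g_i$ exist and can be chosen coherently, and that the structure of $N'$ (including the interplay between the sequence $(c_i)$ and the free part $F$) precisely matches one of the four $P_b$ models rather than producing new stable equivalence classes. The key leverages are the $\cA(1)$-reducedness of $M$, which caps the size of $F$, and the vanishing of $Q_0$-homology, which supplies the needed preimages at each stage of the induction.
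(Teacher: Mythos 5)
Your setup is the same as the paper's: reduce to $a=0$, assume $M$ reduced, and apply Theorem~\ref{Yuslemma} to split $M|_{E(1)} \cong P_0 \oplus F$ with $F$ free over $E(1)$, then treat the $Sq^2$-action as cocycle data over this decomposition. From there, however, your argument never actually closes: you defer exactly the step that constitutes the proof.

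Two concrete problems. First, the derived constraint $c_i c_{i+1} = 0$ is not what the Adem relation gives you directly. Writing $Sq^2 f_i = \alpha_i x_{i+2} + h_i$ with $h_i \in F$, the relation $Sq^2 Sq^2 x_i = 0$ yields $\alpha_i = c_i c_{i+1}$ and $h_i = c_i f_{i+1}$; the element $Sq^2 f_i$ can and does have a $P_0$-component, so you cannot conclude $c_i c_{i+1} = 0$ without further argument. More seriously, you have also ignored the cocycle data describing $Sq^2$ on $F$ itself, which is where most of the constraints live. Second, the ``inductive correction'' producing $N'$ and the claim that $N'$ matches one of the four $P_b$'s is asserted but not carried out, and you yourself flag it as ``the hard part.'' That hard part \emph{is} the theorem. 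The paper's proof spends Lemmas~\ref{xconditions}--\ref{Vconditions} and the two subsequent lemmas squeezing the cocycle data (the linear maps $s,u,v,w$ on $V$ and the sequences $a_i,b_i,c_i,d_i$) via the relations $Q_1 = Sq^1Sq^2+Sq^2Sq^1$ and $Sq^2Sq^2 = Sq^1Q_1$ until it forces $V$ to be at most one-dimensional, concentrated in degree $-3$ or $-2$; only then is the four-way case split ($V=0$ with $a_{-1}=0$ or $1$, $V_{-3}\neq 0$, $V_{-2}\neq 0$) elementary. Without a replacement for that constraint analysis, the fact that no stable types other than $\Sigma^{-2b}P_b$ appear is unsubstantiated. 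So this is not a different route to the result but the same route with its load-bearing span missing.
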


\begin{proof}
We will assume that $a=0$.  We may also assume that $M$ is reduced:
$Sq^2Sq^2Sq^2$ acts as 0 on $M$.  By Theorem~\ref{Yuslemma},
as an $E(1)$-module we have
\[
M|_{E(1)} \iso P_0 \dirsum (E(1) \tensor V)
\]
for some bounded-below graded $\F_2$-vector space $V$.  
Recall that $\cA(1)$ is generated by $E(1)$ and $Sq^2$.
Therefore, to describe
$M$ as an $\cA(1)$-module, it remains to specify the action of $Sq^2$ on $M$
in a manner consistent with its structure as an $E(1)$-module.
This is given by the following cocycle data.  First, we have
\begin{enumerate}
\item a linear functional $s: V \lra \F_2$, and
\item linear transformations
\begin{enumerate}
\item $u : V_i \lra V_{i+1}$,
\item $v : V_i \lra V_{i-1}$, and
\item $w : V_i \lra V_{i-2}$,
\end{enumerate}
\end{enumerate}
such that,
for all $y \in V$,
\begin{equation}
\label{sqtwoy}
Sq^2 y = s(y) x_{t(y)} + Sq^1 u(y) + Q_1 v(y) + Sq^1 Q_1 w(y).
\end{equation}
Here, $ t(y) = 2 + |y|$ and $x_t$ is the nonzero element of $P_0$ in degree $t$
when $t \geq -1$.  If $t < -1$ we take $x_t$ to be 0, though we will see shortly that
this cannot occur.
There can be no term in $V$ itself since $M$ is reduced.  

Similarly, we have sequences indexed on the integers $i \geq -1$:
\begin{enumerate}
\item $a_i \in \F_2$,
\item $b_i \in V_{i+1}$,
\item $c_i \in V_{i-1}$, and
\item $d_i \in V_{i-2}$,
\end{enumerate}
such  that
\begin{equation}
\label{sqtwoxi}
Sq^2 x_i = a_i x_{i+2} + Sq^1 b_i + Q_1 c_i + Sq^1Q_1 d_i.
\end{equation}
Again, there can be no term in $V$ itself since $M$ is reduced.  
It will be convenient to declare $a_i$, $b_i$, $c_i$ and $d_i$ to be $0$ when 
$i < -1$.

Our main tools will be the direct sum decomposition (over $\F_2$)
\[
M \iso P_0 \dirsum V \dirsum Sq^1 V \dirsum Q_1 V \dirsum Sq^1 Q_1 V
\]
and the observation that the elements of $E(1)$ act monomorphically on $V$.

We now need a series of Lemmas.

\renewcommand{\qedsymbol}{}
\end{proof}

First, consider the consequences of the relation $Q_1 = Sq^1 Sq^2 + Sq^2 Sq^1$
on $P_0$.

\begin{lemma}  
\label{xconditions}
The action of $Sq^2$ on $P_0$ satisfies the following relations:
\begin{enumerate}
\item $a_{2i-1} + a_{2i} = 1$ for $i \geq 0$,
\item $b_{2i} = 0$,
\item $c_{2i} = 0$, and
\item $d_{2i} = c_{2i-1}$.
\end{enumerate}
\end{lemma}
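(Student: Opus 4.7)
The plan is to apply the $\cA(1)$-identity $Q_1 = Sq^1 Sq^2 + Sq^2 Sq^1$ to each $x_i\in P_0 \subset M$ and read off constraints term by term in the $\F_2$-vector space decomposition
\[
M \iso P_0 \dirsum V \dirsum Sq^1 V \dirsum Q_1 V \dirsum Sq^1 Q_1 V
\]
induced by the $E(1)$-splitting $M|_{E(1)} \iso P_0 \dirsum (E(1)\tensor V)$.  On the $P_0$-summand the $E(1)$-action is the standard one: $Sq^1 x_i = x_{i+1}$ for odd $i$ and $0$ otherwise, while the $E(1)$-action inherited from the usual $\cA(1)$-module structure on $P_0$ gives $Q_1 x_i = x_{i+3}$ for odd $i$ and $0$ for even $i$ (a one-line computation from $Q_1 = Sq^1Sq^2 + Sq^2Sq^1$ and the known action on Laurent monomials).

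For $i = 2j$ with $j \geq 0$, $Q_1 x_{2j}$ vanishes and $Sq^2 Sq^1 x_{2j} = 0$, so the identity reduces to $0 = Sq^1 Sq^2 x_{2j}$.  Expanding $Sq^2 x_{2j}$ via the formula (\ref{sqtwoxi}) and applying $Sq^1$, every term except $Sq^1 Q_1 c_{2j}$ dies (using $Sq^1 x_{2j+2}=0$ and $Sq^1 Sq^1 = 0$).  Since the map $v \mapsto Sq^1 Q_1 v$ is injective on $V$ (the summand $Sq^1 Q_1 V$ of $E(1)\tensor V$ is a copy of $V$), this forces $c_{2j} = 0$, which is~(3).

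For $i = 2j-1$ with $j \geq 0$, $Sq^1 x_{2j-1} = x_{2j}$, so the identity reads $x_{2j+2} = Sq^1 Sq^2 x_{2j-1} + Sq^2 x_{2j}$.  Expanding both $Sq^2$-terms via (\ref{sqtwoxi}), invoking the parity of $Sq^1$ on $P_0$ together with $Sq^1 Sq^1=0$, and substituting $c_{2j}=0$, the right side collapses to
\[
(a_{2j-1}+a_{2j})\, x_{2j+2} \; + \; Sq^1 b_{2j} \; + \; Sq^1 Q_1 (c_{2j-1} + d_{2j}).
\]
Matching this against $x_{2j+2}$ summand by summand in the direct-sum decomposition yields~(1) from the $P_0$-coefficient, (2) from the $Sq^1 V$-coefficient, and~(4) from the $Sq^1 Q_1 V$-coefficient.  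The whole argument is essentially bookkeeping, and I anticipate no real obstacle; the one step requiring thought beforehand is correctly identifying the standard $Q_1$-action on $P_0$, without which the right-hand sides above have nothing to be matched against.
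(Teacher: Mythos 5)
Your proposal is correct and follows essentially the same route as the paper: apply the relation $Q_1 = Sq^1 Sq^2 + Sq^2 Sq^1$ to each $x_i$, expand via~(\ref{sqtwoxi}), and compare components in the direct sum decomposition $M \iso P_0 \dirsum V \dirsum Sq^1 V \dirsum Q_1 V \dirsum Sq^1 Q_1 V$. The only difference is cosmetic: you separate the even and odd index cases, whereas the paper handles both at once by carrying the parity of $i$ as an $\F_2$-coefficient.
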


\begin{proof}
From equation~(\ref{sqtwoxi}) we have
\begin{align*}
Sq^1 Sq^2 x_i & =  i a_i x_{i+3} + Sq^1 Q_1 c_i \\
Sq^2 Sq^1 x_i & =  i(a_{i+1} x_{i+3} + Sq^1 b_{i+1} + 
  Q_1 c_{i+1}+
  Sq^1 Q_1 d_{i+1} )
\end{align*}

Since $ Q_1 x_i  =  i x_{i+3}$, comparing coefficients of the direct sum
decomposition of $M$ gives
\[
i(1+a_i+a_{i+1}) = 0
\]
for $i \geq -1$, together with
\begin{align*}
i b_{i+1} &= 0 \\
i c_{i+1} &= 0 \\
c_i + i d_{i+1} &=0
\end{align*}
for all $i$. This implies the relations given.
\end{proof}

Next, we consider the action of $Sq^2$ on the free $E(1)$-module generated
by $V$.

\begin{lemma}
\label{sqtwoaction}
Let $y \in V$ and $t = t(y)$.  Then
\begin{enumerate}
\item
$ Sq^1 Sq^2 y = s(y) t(y) x_{t+1} + Sq^1 Q_1 v(y)$,
\item
$Sq^2 Sq^1 y  = Q_1 y +  s(y) t(y) x_{t+1} + Sq^1 Q_1 v(y)$, and
\item
$Sq^2 Q_1 y 
= s(y) t(y) \left( a_{t+1} x_{t+3} + Sq^1 b_{t+1} + Q_1 c_{t+1} + Sq^1 Q_1 d_{t+1}
  \right)$.
\end{enumerate}
\end{lemma}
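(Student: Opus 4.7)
The plan is to derive all three identities from equation~(\ref{sqtwoy}) by applying $Sq^1$ or $Sq^2Sq^1$ to both sides, exploiting $(Sq^1)^2=0$ together with the Adem relations in $\cA(1)$.

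For (1), I simply apply $Sq^1$ to~(\ref{sqtwoy}). The summands $Sq^1u(y)$ and $Sq^1Q_1w(y)$ are annihilated by $(Sq^1)^2=0$, the summand $Q_1v(y)$ becomes $Sq^1Q_1v(y)$ unchanged, and on $P_0$ we have $Sq^1 x_t = t\,x_{t+1}$ modulo $2$. Statement (2) is then immediate from the defining relation $Q_1 = Sq^1Sq^2 + Sq^2Sq^1$, which rearranges to $Sq^2Sq^1 y = Q_1 y + Sq^1Sq^2 y$, into which I substitute~(1).

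For (3), the key identity is $Sq^2 Q_1 = Sq^2 Sq^1 Sq^2$ in $\cA(1)$, which follows from the Adem relation $Sq^2 Sq^2 = Sq^3 Sq^1$ together with $(Sq^1)^2 = 0$. Applying $Sq^2 Sq^1$ to~(\ref{sqtwoy}), the summands containing $Sq^1u(y)$ and $Sq^1Q_1w(y)$ again die by $(Sq^1)^2=0$. The $Q_1 v(y)$ contribution transforms, using $Sq^1Q_1 = Q_1 Sq^1$ and the commutation $Sq^2 Q_1 = Q_1 Sq^2$ (both valid in $\cA(1)$), into $Q_1 Sq^2 Sq^1 v(y)$; expanding $Sq^2 Sq^1 v(y)$ by statement~(2) applied to $v(y)$ and then hitting the result with $Q_1$ kills the $Q_1 v(y)$ and $Sq^1Q_1v(v(y))$ pieces by $Q_1^2=0$, leaving only the scalar multiple $s(v(y))\,t(v(y))(t(v(y))+1)\,x_{t(v(y))+4}$ coming from $Q_1 x_i = i\,x_{i+3}$ on $P_0$. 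Since $n(n+1)\equiv 0 \pmod 2$ for any integer $n$, this term vanishes. The only surviving contribution is $s(y)\,Sq^2 Sq^1 x_t = s(y)\, t\, Sq^2 x_{t+1}$, which expands by~(\ref{sqtwoxi}) to exactly the claimed formula.

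The main obstacle is part (3): the $Q_1 v(y)$ piece is the only term that does not die by a single application of $(Sq^1)^2 = 0$. Handling it forces us to bootstrap from statement~(2) just proved, to deploy both commutation identities $Sq^1 Q_1 = Q_1 Sq^1$ and $Sq^2 Q_1 = Q_1 Sq^2$ in $\cA(1)$, and finally to invoke the parity cancellation $n(n+1)\equiv 0 \pmod 2$. Everything else is bookkeeping.
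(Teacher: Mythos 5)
Parts (1) and (2) match the paper's proof. For part (3), both you and the paper start from the identity $Sq^2Q_1 = Sq^2Sq^1Sq^2$ and reduce to the two terms $s(y)t(y)\,Sq^2 x_{t+1} + Sq^2Sq^1Q_1 v(y)$. The divergence is in killing the second term: the paper observes that $Sq^2Sq^1Q_1 = Sq^2Sq^2Sq^2$ in $\cA(1)$ (again from $Sq^2Sq^2 = Sq^1Sq^2Sq^1$ and $(Sq^1)^2=0$), which vanishes on $M$ because $M$ is reduced -- a one-line appeal to the standing hypothesis. You instead commute $Q_1$ past $Sq^2Sq^1$ to get $Q_1Sq^2Sq^1 v(y)$, expand $Sq^2Sq^1 v(y)$ via part~(2) applied to $v(y)$, use $Q_1^2=0$ to kill two of the three resulting terms, and finish with the parity cancellation $t(v(y))\bigl(t(v(y))+1\bigr)\equiv 0 \pmod 2$ applied to $Q_1 x_{t(v(y))+1}$. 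Both arguments are correct; the paper's is shorter and makes the role of the reducedness hypothesis transparent, while yours is more computational but has the minor virtue of not requiring one to recognize the identity $Sq^2Sq^1Q_1 = Sq^2Sq^2Sq^2$ explicitly (reducedness still enters, of course, because it underlies the absence of $V$-terms in equation~(\ref{sqtwoy}) and the validity of part~(2)).
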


\begin{proof}
Applying $Sq^1$ to equation~(\ref{sqtwoy}) gives
\[
Sq^1 Sq^2 y = s(y) t(y) x_{t+1} + Sq^1 Q_1 v(y).
\]
We must then have
\begin{align*}
Sq^2 Sq^1 y &= Q_1 y + Sq^1 Sq^2 y\\
 & = Q_1 y +  s(y) t(y) x_{t+1} + Sq^1 Q_1 v(y)
\end{align*}
and
\begin{align*}
Sq^2 Q_1 y &= Sq^2 Sq^1 Sq^2 y = 
     Sq^2\left( s(y) t(y) x_{t+1}  + Sq^1 Q_1 v(y) \right)  \\
&= s(y)t(y) Sq^2 x_{t+1}  \\
&= s(y) t(y) \left( a_{t+1} x_{t+3} + Sq^1 b_{t+1} + Q_1 c_{t+1} + Sq^1 Q_1 d_{t+1}
  \right)
\end{align*}
where we have used that 
$Sq^2 Sq^2 Sq^1 =0$ and that
$Sq^2 Sq^1 Q_1$ acts trivially since $M$ is reduced.
\end{proof}

Now we turn to the consequences of the relation $Sq^2Sq^2 = Sq^1 Q_1$ on $V$.
These give stringent restrictions on the vector space $V$.

\begin{lemma}
\label{Vconditions}
Each $V_i$ is at most one-dimensional.  In addition, we have the following.
\begin{enumerate}
\item $V_i = 0$ if $i < -3$.
\item $V_{2i-2}$ is spanned by $d_{2i}$.  If it is nonzero, then
 $a_{2i} = 0$, and
\[
s(d_{2i}) + s(v(d_{2i})) = 1.
\]
\item $V_{2i-1}$ is spanned by $d_{2i+1} + v(c_{2i+1})$.  If it is nonzero,
then $a_{2i+1} = 0$, $b_{2i+1} = 0$, 
\[
s(d_{2i+1} + v(c_{2i+1})) = 1,
\] 
and
\[
c_{2i+1} = u(d_{2i+1} + v(c_{2i+1})).
\]
\end{enumerate}
\end{lemma}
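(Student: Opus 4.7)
The plan is to exploit the Adem relation $Sq^2Sq^2 = Sq^1Q_1$ (both sides equal $Sq^3Sq^1$) applied to a general $y\in V$, expanding the result in the $\F_2$-decomposition
\[
M \iso P_0 \dirsum V \dirsum Sq^1 V \dirsum Q_1 V \dirsum Sq^1Q_1 V
\]
and comparing coefficients in each summand. Starting from equation~(\ref{sqtwoy}) for $Sq^2 y$ and applying $Sq^2$ term-by-term using equation~(\ref{sqtwoxi}) and Lemma~\ref{sqtwoaction}, I obtain an expansion of $Sq^2Sq^2 y$; the contribution $Sq^2Sq^1Q_1 w(y)$ drops out because $Sq^2Sq^1Q_1 = Sq^2Sq^2Sq^2$ acts as zero on reduced $M$. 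Matching this expansion against $Sq^1Q_1 y$ in each summand produces four equations in the cocycle data $s,u,v,a_t,b_t,c_t,d_t$.

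The two decisive ones, from the $Q_1 V$- and $Sq^1Q_1 V$-summands, are
\[
u(y) = \bigl[s(y) + (t{-}1) s(v(y))\bigr] c_t
\qquad\text{and}\qquad
v(u(y)) + \bigl[s(y) + (t{-}1)s(v(y))\bigr] d_t = y,
\]
with $t = |y|+2$. The argument splits on the parity of $|y|$. If $|y|$ is even so $t = 2i$ is even, Lemma~\ref{xconditions}(3) gives $c_{2i}=0$, forcing $u(y)=0$; then $v(u(y))=0$ and the second equation reduces to $y = [s(y)+s(v(y))]\,d_{2i}$, yielding part~(2). If $|y|$ is odd so $t=2i+1$ is odd, the factor $(t{-}1)$ is even and the bracket collapses to $s(y)$; the first equation becomes $u(y) = s(y)c_{2i+1}$, which fed into the second gives $y = s(y)\bigl[d_{2i+1}+v(c_{2i+1})\bigr]$, yielding part~(3). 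In both cases one-dimensionality is immediate since the generator is explicit, and whenever $V_{|y|}$ is nonzero the bracket must equal $1$, which is the prescribed condition on $s$.

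The remaining bookkeeping is short. The $P_0$-equation reads $[s(y)+(t{-}1)s(v(y))]\,a_t + (t{+}1)s(u(y)) = 0$; in the even case $u(y)=0$ kills the last term, and in the odd case $(t{+}1)$ is even, so on a nonzero generator we conclude $a_t = 0$. The $Sq^1 V$-equation similarly forces $b_t=0$ in the odd case; $b_{2i}=0$ in the even case is already recorded in Lemma~\ref{xconditions}. The identity $c_{2i+1} = u(d_{2i+1}+v(c_{2i+1}))$ then reads off directly from $u(y) = s(y)c_{2i+1}$ with $s(y)=1$. Finally, part~(1) follows from parts~(2) and~(3): for $|y|\le -4$ the explicit generator of $V_{|y|}$ involves $c_i$ or $d_i$ with $i<-1$, all zero by convention.

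The main obstacle is the combinatorial bookkeeping: each of the four terms of $Sq^2 y$ produces contributions in several summands after a second application of $Sq^2$, and the parity factors $(t\pm 1)\bmod 2$ coming from Lemma~\ref{sqtwoaction} must be tracked precisely so that the correct subset of the equations collapses in each parity case. Once the four summand-by-summand equations are laid out, the deductions themselves are essentially immediate.
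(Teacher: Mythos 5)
Your proposal is correct and follows essentially the same approach as the paper: apply the relation $Sq^2Sq^2 = Sq^1Q_1$ to $y\in V$, expand both sides through the $\F_2$-decomposition $M \iso P_0 \dirsum V \dirsum Sq^1 V \dirsum Q_1 V \dirsum Sq^1Q_1 V$ using Lemmas~\ref{xconditions} and \ref{sqtwoaction}, and extract the four coefficient equations, splitting on the parity of $t = |y|+2$. Your sign conventions $(t\pm 1)$ agree with the paper's $(1+t)$ modulo 2, and the deductions for one-dimensionality, the vanishing of $a_t,b_t$, the identity $c_{2i+1}=u(d_{2i+1}+v(c_{2i+1}))$, and the lower bound $V_i=0$ for $i<-3$ all match the paper's argument.
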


\begin{proof}
Applying the preceding Lemma to $y \in V_{t-2}$, we find

\begin{align*}
Sq^1 Q_1 y & =
   Sq^2 Sq^2 y =
     Sq^2 \left( s(y) x_t + Sq^1 u(y) + Q_1 v(y) + Sq^1 Q_1 w(y) \right) \\
 &= s(y) \left( a_t x_{t+2} + Sq^1 b_t + Q_1 c_t + Sq^1 Q_1 d_t \right) \\
&\phantom{=} + Q_1 u(y) + s(u(y)) (1+t) x_{t+2} + Sq^1 Q_1 v(u(y)) \\
&\phantom{=} + s(v(y)) (1+t) \left(
      a_t x_{t+2} + Sq^1 b_t + Q_1 c_t + Sq^1 Q_1 d_t \right).
\end{align*}

\noindent
where we let t = t(y).
Separating terms from distinct summands, we get
\begin{align*}
0 & = s(y) a_t + s(u(y))(1+t) + s(v(y)) (1+t) a_t \\
0 &= \left( s(y) + s(v(y)) (1+t) \right) b_t  \\
0 &= \left( s(y) + s(v(y)) (1+t) \right) c_t + u(y)  \\
y &=  \left( s(y) + s(v(y)) (1+t) \right) d_t + v(u(y))  
\end{align*}

If $t=2i$, then putting $c_{2i} = 0$ in the third equation implies that
$u : V_{2i-2} \lra V_{2i-1}$ is 0.  Then $v(u(y)) = 0$ as well,
so that the last equation gives
\[
y = \left( s(y) + s(v(y)) \right) d_{2i}.
\]
Hence $V_{2i-2}$ is at most one dimensional, spanned by $d_{2i} $.
If $d_{2i} \neq 0$ then, letting $y=d_{2i}$ in this equation gives
\[
 s(d_{2i}) + s(v(d_{2i})) = 1.
\]
The first of our 4 summands then gives 
\begin{align*}
0 & = \left( s(d_{2i}) + s(v(d_{2i})) \right) a_{2i} + s(u(d_{2i})) \\
 & = a_{2i} + s(0) \\
 & = a_{2i}.
\end{align*}

In the other parity, $t=2i+1$, so that $|y| = 2i-1$, we get
\begin{align*}
0 & = s(y) a_{2i+1} \\
0 &=  s(y)  b_{2i+1}  \\
u(y) &=  s(y)  c_{2i+1}  \\
y &=   s(y)  d_{2i+1} + v(u(y))   \\
 &=   s(y) \left( d_{2i+1} + v(c_{2i+1})  \right).
\end{align*}

We again find that $V_{2i-1}$ is at most one dimensional,
spanned now by $d_{2i+1} + v(c_{2i+1})$.  
If this is non-zero, then
letting $y = d_{2i+1} + v(c_{2i+1})$ in the last equation gives
$s(y) = 1$, from which it follows that $a_{2i+1}=0$, $b_{2i+1}=0$,
and $c_{2i+1} = u(d_{2i+1} + v(c_{2i+1}))$.

Since the lowest degree nonzero $d_i$ is $d_{-1}$, this gives 
$V_i = 0$ for $i < -3$.
\end{proof}

The action of $Sq^2 Sq^2 = Sq^1 Q_1$ on $P_0$ is already determined
by the $E(1)$-module structure of $M$.  This
eliminates most of the possibilities left open by the 
preceding Lemma.  We handle the even and odd degree cases
separately because their proofs are somewhat different.

\begin{lemma}
If $V_{2i-2} \neq 0$ then $2i-2=-2$.
\end{lemma}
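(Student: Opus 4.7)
Plan: The strategy is to apply the Adem relation $Sq^2 Sq^2 = Sq^1 Q_1$ to the element $x_{2i-2}\in P_0\subset M$, which lies in $P_0$ exactly when $2i-2\geq -1$, i.e., when $i\geq 1$. The case $i=0$, corresponding to the conclusion $2i-2=-2$, is thereby naturally excluded from the argument, which is exactly what the statement requires. I will show that for every $i\geq 1$ the hypothesis $V_{2i-2}\neq 0$ leads to the contradictory identity $Sq^1 Q_1 d_{2i}=0$, from which $d_{2i}=0$ follows because $Sq^1 Q_1$ is injective on the vector space $V$ (it is the top operator of $E(1)$, and $V$ sits in $M$ as the generators of the free $E(1)$-summand $E(1)\otimes V$).

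Lemma~\ref{Vconditions}(2) says $V_{2i-2}\neq 0$ forces $a_{2i}=0$, and the periodic pattern of $a_j$ from Lemma~\ref{xconditions} (with $a_j=1$ iff $j\equiv 2,3\pmod{4}$) forces $i$ to be even; the case $i=1$ is dispatched by $a_2=1$. So assume $i\geq 2$ is even. I first establish a preliminary vanishing $V_{2i-4}=0$: this is $V_{2(i-1)-2}$ with $i-1$ odd, and applying Lemma~\ref{Vconditions}(2) with index $i-1$ would require $a_{2i-2}=0$, contradicting $a_{2i-2}=1$ (since $2i-2\equiv 2\pmod{4}$). By Lemma~\ref{xconditions}(4) we then have $d_{2i-2}=c_{2i-3}\in V_{2i-4}=0$, so every $V$-correction in $Sq^2 x_{2i-2}$ vanishes and $Sq^2 x_{2i-2}=a_{2i-2}x_{2i}=x_{2i}$.

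A second application of $Sq^2$, using $a_{2i}=0$ together with $b_{2i}=c_{2i}=0$ from Lemma~\ref{xconditions} and the fact that $d_{2i}=c_{2i-1}$ spans the hypothetically non-zero $V_{2i-2}$, yields $Sq^2 Sq^2 x_{2i-2}=Sq^2 x_{2i}=Sq^1 Q_1 d_{2i}$. On the other hand, the Adem relation together with the fact that $Q_1$ acts on $H^{*}BC_2$ as the derivation $x^j\mapsto j x^{j+3}$ (so $Q_1 x_{2i-2}=0$ for the even index $2i-2$) gives $Sq^2 Sq^2 x_{2i-2}=Sq^1 Q_1 x_{2i-2}=0$ in $M$. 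Equating the two expressions forces $Sq^1 Q_1 d_{2i}=0$, hence $d_{2i}=0$, contradicting $V_{2i-2}\neq 0$. The main subtlety is the preliminary step $V_{2i-4}=0$: without it the expression $Sq^2 x_{2i-2}$ would carry an extra $Sq^1 Q_1 d_{2i-2}$ term whose image under a further $Sq^2$ vanishes (by $M$ reduced), obscuring the identification of $d_{2i}$; once this bookkeeping is in place, the Adem relation closes the argument cleanly.
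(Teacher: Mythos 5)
Your overall strategy — apply the Adem relation $Sq^2Sq^2 = Sq^1Q_1$ to $x_{2i-2}$ and extract $Sq^1Q_1 d_{2i} = 0$ — is the right one and matches the second half of the paper's argument. But there is a genuine gap in how you get there: you assert a global periodic pattern $a_j = 1$ iff $j \equiv 2,3 \pmod 4$ and attribute it to Lemma~\ref{xconditions}. That lemma only gives the pairwise constraint $a_{2i-1} + a_{2i} = 1$; it does \emph{not} fix which of the two values in each pair is $1$, and in particular does not relate one pair $(a_{2i-1},a_{2i})$ to the next. Even in the purely $V=0$ case there are two consistent patterns (the ones realizing $P_0$ and $\Sigma^{-2}P_1$), and once $V\neq 0$ is allowed the pattern is exactly what the theorem is trying to pin down. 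You then lean on this unjustified pattern three times: to rule out $i$ odd (``the case $i=1$ is dispatched by $a_2 = 1$''), to assert $a_{2i-2}=1$, and to force $V_{2i-4}=0$. Without the pattern, none of these follow.

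The paper closes this gap by a separate, preliminary application of the same Adem relation to $x_{2i-3}$ (valid because $2i-2\neq -2$ forces $2i-3\geq -1$). Expanding $Sq^2Sq^2 x_{2i-3}$ using Lemmas~\ref{xconditions} and \ref{sqtwoaction} and reading off the $P_0$-component forces $a_{2i-3} = 0$, whence $a_{2i-2}=1$ from $a_{2i-3}+a_{2i-2}=1$. This is precisely the input your argument is missing. Note also that your detour through $V_{2i-4}=0$ is superfluous: as you observe yourself, $Sq^2(Sq^1Q_1 d_{2i-2})$ is killed by the top-class relation on a reduced module regardless of whether $d_{2i-2}$ vanishes, so once $a_{2i-2}=1$ is in hand you can go directly to $Sq^2Sq^2 x_{2i-2} = Sq^1Q_1 d_{2i} = 0$, the desired contradiction.
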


\begin{proof}
Lemma~\ref{Vconditions} implies that if
 $V_{2i-2} \neq 0$ then $V_{2i-2} = \langle d_{2i}\rangle$ and 
$a_{2i} = 0$.
Lemma~\ref{xconditions} then gives $a_{2i-1} = 1$, and we have
\[
Sq^2 x_{2i-1} = x_{2i+1} + Sq^1 b_{2i-1} + Q_1 d_{2i} + Sq^1Q_1 d_{2i-1}
\]
and
\[
Sq^2 x_{2i} = Sq^1Q_1 d_{2i}.
\]

If $2i-2 \neq -2$
then $2i-2 \geq 0$ by Lemma~\ref{Vconditions}, and thus
$2i-3 \geq -1$.  Then we have
\begin{align*}
0 &= Sq^2Sq^2 x_{2i-3} \\
 &= Sq^2(a_{2i-3} x_{2i-1} + Sq^1 b_{2i-3} + Q_1 c_{2i-3} + Sq^1Q_1 d_{2i-3}) \\
 &= a_{2i-3}\left( x_{2i+1} + Sq^1 b_{2i-1} + Q_1 d_{2i} + Sq^1Q_1 d_{2i-1}\right) \\
&\phantom{=} + Q_1 b_{2i-3} + Sq^1 Q_1 v(b_{2i-3}) \\
&\phantom{=} + 0 \\
&= a_{2i-3} x_{2i+1} + Sq^1 b_{2i-1} +
  Q_1(b_{2i-3} + d_{2i}) +
  Sq^1 Q_1(b_{2i-3} + d_{2i}).
\end{align*}

Hence, $a_{2i-3} = 0$, so by Lemma~\ref{xconditions}, $a_{2i-2} = 1$.
We therefore have
\begin{align*}
0 &= Sq^2Sq^2 x_{2i-2} \\
 &= Sq^2( x_{2i} + Sq^1Q_1 d_{2i-2}) \\
 &= Sq^1Q_1 d_{2i} \\
\end{align*}
which is a contradiction.
\end{proof}

\begin{lemma}
If $V_{2i-1} \neq 0$ then $2i-1=-3$.
\end{lemma}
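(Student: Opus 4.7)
The plan is to mirror exactly the strategy of the preceding even-degree lemma, replacing the roles of even and odd indices. Assume for contradiction that $V_{2i-1}\neq 0$ with $2i-1\neq -3$; Lemma~\ref{Vconditions} forces $2i-1\geq -1$, so $i\geq 0$, and supplies a generator $y = d_{2i+1}+v(c_{2i+1})$ of $V_{2i-1}$ together with $a_{2i+1}=0$, $b_{2i+1}=0$, $s(y)=1$, and $c_{2i+1}=u(y)$. I will derive a contradiction in two stages, each using the cocycle identity $Sq^2Sq^2 = Sq^1Q_1$ on a chosen $x_j\in P_0$.

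For the first stage, I expand $Sq^2Sq^2x_{2i-1}$ by applying $Sq^2$ termwise to $Sq^2 x_{2i-1}= a_{2i-1}x_{2i+1}+Sq^1b_{2i-1}+Q_1 c_{2i-1}+Sq^1Q_1 d_{2i-1}$, invoking Lemma~\ref{sqtwoaction}. The $x_{2i+3}$ contributions coming from $Sq^2 x_{2i+1}$, $Sq^2Sq^1 b_{2i-1}$, and $Sq^2 Q_1 c_{2i-1}$ all vanish: the first because $a_{2i+1}=0$, the other two because the parity factors $(2i{+}2)$ and $2i$ are zero mod~$2$. After collecting, the result lies entirely in $E(1)\otimes V$, namely
\[
Sq^2Sq^2x_{2i-1}=Q_1\bigl(b_{2i-1}+a_{2i-1}c_{2i+1}\bigr)+Sq^1Q_1\bigl(v(b_{2i-1})+a_{2i-1}d_{2i+1}\bigr).
\]
Since $Sq^1Q_1 x_{2i-1}=0$ in $P_0$ (by the parity of $Q_1 x_j = j\,x_{j+3}$) and the direct sum decomposition $M\iso P_0\dirsum V\dirsum Sq^1V\dirsum Q_1 V\dirsum Sq^1Q_1V$ makes $Q_1$ and $Sq^1Q_1$ act injectively on $V$, both coefficients in $V$ must vanish. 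Substituting $b_{2i-1}=a_{2i-1}c_{2i+1}$ into the second equation and using $c_{2i+1}=u(y)$ gives $a_{2i-1}(d_{2i+1}+v(c_{2i+1}))=a_{2i-1}y=0$; since $y\neq 0$, I conclude $a_{2i-1}=0$, and Lemma~\ref{xconditions}(1) then forces $a_{2i}=1$.

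For the second stage, I compute $Sq^2Sq^2x_{2i}$. Using $a_{2i}=1$, $b_{2i}=c_{2i}=0$, and $d_{2i}=c_{2i-1}$ from Lemma~\ref{xconditions}, I get $Sq^2 x_{2i}=x_{2i+2}+Sq^1Q_1 c_{2i-1}$. Applying $Sq^2$ kills the second term (since $M$ reduced means $Sq^2Sq^2Sq^2=0$, and $Sq^2Sq^1Q_1=Sq^2Sq^2Sq^2$ via the Adem relation $Sq^1Q_1=Sq^2Sq^2$). From $a_{2i+1}=0$ I also get $a_{2i+2}=1$, together with $b_{2i+2}=c_{2i+2}=0$ and $d_{2i+2}=c_{2i+1}$, so $Sq^2 x_{2i+2}=x_{2i+4}+Sq^1Q_1 c_{2i+1}$. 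Hence
\[
Sq^2Sq^2 x_{2i} = x_{2i+4}+Sq^1Q_1 c_{2i+1},
\]
which must equal $Sq^1Q_1 x_{2i}=0$. Since $x_{2i+4}$ lies in $P_0$ while $Sq^1Q_1 c_{2i+1}$ lies in $E(1)\otimes V$, both summands must vanish separately. But $2i+4\geq 4$ means $x_{2i+4}\neq 0$, the desired contradiction. Combined with Lemma~\ref{Vconditions}, this proves that $V_{2i-1}\neq 0$ only when $2i-1=-3$.

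The conceptual content is essentially a propagation argument along the $x_j$ ladder in $P_0$; the main obstacle is the bookkeeping, namely tracking which $a_j$'s are constrained to be $0$ or $1$ via Lemma~\ref{xconditions}, ensuring that each application of Lemma~\ref{sqtwoaction} is at the correct degree for the argument's degree, and scrupulously separating the $P_0$ summand from the $E(1)\otimes V$ summand at each extraction of relations.
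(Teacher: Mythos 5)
Your proof is correct and uses the same strategy as the paper's — applying the relation $Sq^2Sq^2=Sq^1Q_1$ to the two adjacent basis elements $x_{2i-1}$ and $x_{2i}$ of $P_0$ — but you run the two computations in the opposite order: the paper first evaluates $Sq^2Sq^2x_{2i}$ to deduce $a_{2i}=0$ and then gets a contradiction from $Sq^2Sq^2x_{2i-1}$, whereas you first evaluate $Sq^2Sq^2x_{2i-1}$ to deduce $a_{2i-1}=0$ (using the substitution $b_{2i-1}=a_{2i-1}c_{2i+1}$ to reduce the $Sq^1Q_1$ coefficient to $a_{2i-1}y$) and then get the contradiction $x_{2i+4}=0$ from $Sq^2Sq^2x_{2i}$. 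Both orders are valid under the assumed hypothesis, and the underlying computations are the same, so this counts as essentially the same approach.
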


\begin{proof}
Lemma~\ref{Vconditions} implies that
if $V_{2i-1} \neq 0$ and $2i-1 \neq -3$ then $2i-1 \geq -1$.  Also,
$d_{2i+1}+v(c_{2i+1}) \neq 0$, $a_{2i+1}=0$ and $b_{2i+1}=0$.  By
Lemma~\ref{xconditions}, it follows that $a_{2i+2} = 1$.  Then
\[
Sq^2 x_{2i+1} = Q_1 c_{2i+1} + Sq^1Q_1 d_{2i+1}
\]
and
\[
Sq^2 x_{2i+2} = x_{2i+4} + Sq^1 Q_1 c_{2i+1}.
\]
(Recall from Lemma~\ref{xconditions} that $c_{2i+1} = d_{2i+2}$.)
Then,
\begin{align*}
0 &= Sq^2Sq^2 x_{2i} \\
 &= Sq^2(a_{2i} x_{2i+2} +  Sq^1Q_1 d_{2i}) \\
 &= a_{2i}\left( x_{2i+4}  + Sq^1Q_1 c_{2i+1}\right) 
\end{align*}

Hence, $a_{2i} = 0$, so by Lemma~\ref{xconditions}, $a_{2i-1} = 1$.
We therefore have
\begin{align*}
0 &= Sq^2Sq^2 x_{2i-1} \\
 &= Sq^2( x_{2i+1} + Sq^1 b_{2i-1} + Q_1 c_{2i-1} + Sq^1Q_1 d_{2i-1}) \\
 &= Q_1 c_{2i+1} + Sq^1Q_1 d_{2i+1} \\
&\phantom{=} + Q_1 b_{2i-1} + Sq^1Q_1 v(b_{2i-1}) \\
&\phantom{=} + 0.
\end{align*}
The $Q_1$ component implies that $b_{2i-1} = c_{2i+1}$.  But then,
the $Sq^1Q_1$ component is $Sq^1Q_1(d_{2i+1}+v(c_{2i+1})) \neq 0$,
which is a contradiction.

\end{proof}

\begin{proof}[Proof of~\ref{thm:proofofYustheorem} continued]
Now we can finish the proof.  Certainly $V_{-3}$ and $V_{-2}$ cannot 
both be nonzero, since the first implies $a_{-1}=0$ and the second
implies $a_0 = 0$, but we must have $a_{-1} + a_0 = 1$ by 
Lemma~\ref{xconditions}.

If both are 0, then $ M|_{E(1)} \iso P_0$.
Lemma~\ref{xconditions} gives $a_{2i-1} + a_{2i} = 1$,
while $0 = Sq^1Q_1 x_i = Sq^2Sq^2 x_i$ gives $a_i a_{i+2} = 0$.
The entire $\cA(1)$ action is thus determined by $a_{-1}$.
It follows that $M \iso P_0$ or $M \iso \Sigma^{-2}P_1$.

If $V_{-3} \neq 0$, then $y = d_{-1} \neq 0$, while $c_{-1} = 0$
since $V_{-2}=0$.  Also, $a_{-1} = 0$ and $s(y) = 1$.  Therefore,
Lemma~\ref{sqtwoaction} gives
\begin{align*}
Sq^2 y &= x_{-1} \\
Sq^2 Sq^1 y &= Q_1 y + x_0 \\
Sq^2 Q_1 y &= x_2.
\end{align*}
With the exception of $Sq^2 x_{-1} = Sq^2Sq^2 y = Sq^1Q_1 y$,
the action of $Sq^2$ on the $x_i$ alternates as in the case $V=0$.
It follows that $M \iso \Sigma^{-6}P_3$ under the isomorphism
which takes $y$ to the bottom class, $111$,
and $x_1$ to the indecomposable in degree 1,  $124+142+421$, in the notation
of Section~\ref{seclocating}.  (See Figure~\ref{Pns}).

Finally, if $V_{-2} \neq 0$, then $V_{-2} = \langle d_0\rangle$,
$a_0 = 0$, $a_{-1} = 1$, and the $b_i$, $c_i$ and $d_i$ are all 0
except for $c_{-1} = d_0$.  We get
\begin{align*}
Sq^2 d_0 &= x_{0} \\
Sq^2 Sq^1 d_0 &= Q_1 d_0 \\
Sq^2 Q_1 d_0 &= 0,
\end{align*}
while
\begin{align*}
Sq^2 x_{-1} &= x_{1} + Q_1 d_0 \\
Sq^2 x_0 &= Sq^1Q_1 d_0.
\end{align*}
The remaining $Sq^2 x_i$ are as in $P_0$.  This is isomorphic to 
$\Sigma^{-4} P_2$ by the isomorphism under which $d_0$  generates
the Joker, while $R$ is the submodule spanned by
\[
x_{-1}+Sq^1 d_0,\,\, x_0,\,\, x_1 + Q_1 d_0,\,\,  x_2 + Sq^1 Q_1 d_0,\,\, 
x_3,\,\, x_4, \ldots
\]
\end{proof}

\section{$\Pic^{(k)}$ continued}

We now turn to the determination of the groups $\Pic^{(0)}$.
For $E(1)$, the argument  is similar to that
for $\Pic^{(1)}$.

\begin{prop} 
\label{PiczeroE}
If $M \in E(1)\Mod^{(0)}$ and $H(M,Q_0) = \Sigma^s \F_2$ then
$M \simeq \Sigma^{s+1}R$.  Therefore,
$\Pic^{(0)}(E(1)) = \{\Sigma^i R\} \iso \Z$.
\end{prop}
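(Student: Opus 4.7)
The proof should mirror that of Yu's Lemma (Theorem~\ref{Yuslemma}), interchanging the roles of $Q_0$ and $Q_1$. After suspending and passing to the reduced part, I reduce to showing that if $M$ is reduced, bounded-below, $Q_0$-local, with $H(M, Q_0) = \Sigma^{-1}\F_2$, then $M \hequiv R$. Reducedness gives $Q_0 Q_1 = 0$ throughout $M$. Fix $x_{-1} \in M_{-1}$ representing the generator of $H(M, Q_0)$, so $Sq^1 x_{-1} = 0$ and $x_{-1} \notin \Image(Sq^1)$. I split into two cases according to whether $Q_1 x_{-1}$ is non-zero: the good case builds a copy of $R$ inside $M$, while the bad case leads to a contradiction via infinite descent.

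In the good case, $Q_1 x_{-1} \neq 0$. The module $R$ is generated as an $E(1)$-module by its classes in odd degrees $-1, 1, 3, \ldots$, subject to the relations $Sq^1 x^R_{2k+1} = Q_1 x^R_{2k-1}$ for $k \geq 0$. I construct an $E(1)$-linear map $\phi : R \to M$ with $\phi(x^R_{-1}) = x_{-1}$ by inductively choosing $\tilde x_{2k+1} \in M_{2k+1}$ with $Sq^1 \tilde x_{2k+1} = Q_1 \tilde x_{2k-1}$, starting from $\tilde x_{-1} = x_{-1}$. At each step $Q_1 \tilde x_{2k-1}$ lies in $\Ker(Sq^1)$ by reducedness, and its degree $2k+2 > -1$ places it in $\Image(Sq^1)$ because $H(M, Q_0)$ is concentrated in degree $-1$. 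The resulting $\phi$ sends the $Q_0$-homology generator to the $Q_0$-homology generator, and both $Q_1$-homologies vanish, so Theorem~\ref{stableiso} identifies $\phi$ as a stable equivalence.

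In the bad case, $Q_1 x_{-1} = 0$, so $Q_1$-acyclicity yields $x_{-1} = Q_1 z_0$ with $z_0 \in M_{-4}$. If $Sq^1 z_0 = 0$, then concentration of $Q_0$-homology in degree $-1$ forces $z_0 = Sq^1 z_0'$, giving $x_{-1} = Q_1 Sq^1 z_0' = Sq^1 Q_1 z_0' \in \Image(Sq^1)$, contradicting that $[x_{-1}] \neq 0$ in $H(M, Q_0)$. Otherwise $Sq^1 z_0 \in \Ker(Q_1)$ (since $Sq^1 Q_1 z_0 = Q_0 Q_1 z_0 = 0$ by reducedness), giving $Sq^1 z_0 = Q_1 z_1$ with $z_1$ in strictly lower degree. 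Iterating the same dichotomy on $z_1, z_2, \ldots$ either produces a direct contradiction or yields an infinite descending chain of elements of $M$, violating bounded-below. Thus this case is vacuous.

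For the Picard group statement, Lemma~\ref{onedim} forces any $M \in \Pic^{(0)}(E(1))$ to have one-dimensional $Q_0$-homology, hence by the structural theorem just proved, $M \hequiv \Sigma^i R$ for a unique $i \in \Z$, the suspensions being distinguished by the degree of their $Q_0$-homology. The multiplication rule $\Sigma^i R \tensor \Sigma^j R \hequiv \Sigma^{i+j-1} R$ follows from $\Sigma R \tensor \Sigma R \hequiv \Sigma R$ (Theorem~\ref{Libasicprops}), and reindexing exhibits $\Pic^{(0)}(E(1)) \iso \Z$. The main obstacle I expect is the descent argument, which requires interlocking reducedness, $Q_1$-acyclicity, and single-degree concentration of $Q_0$-homology to force strict descent in degrees.
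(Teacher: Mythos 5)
Your proof is correct and follows essentially the same two-case argument as the paper's: ruling out $Q_1 x_{-1} = 0$ by infinite descent using reducedness, $Q_1$-acyclicity, and concentration of $Q_0$-homology, and otherwise building a map from $R$ degree by degree and invoking the Adams--Margolis criterion (Theorem~\ref{stableiso}). The only cosmetic difference is that the paper maintains the additional invariant $Q_1 x_n \neq 0$ to exhibit a literal submodule isomorphic to $R$, whereas you construct an abstract $E(1)$-map $R \to M$; both versions conclude identically via Theorem~\ref{stableiso}.
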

 
\begin{proof}
By suspending appropriately, we may assume that
$M \in E(1)\Mod^{(0)}$ and $H(M,Q_0) = \Sigma^{-1}\F_2$.
We may also assume that $M$ is reduced.

Let $0 \neq \langle[x]\rangle = H(M,Q_0)$, so that $Sq^1x = 0$ and $x\notin \Image(Sq^1)$.
There are two possibilities:
\begin{enumerate}
\item $Q_1 x = 0$
\item $Q_1 x \neq 0$
\end{enumerate}
In the first case, $x = Q_1 y_0$ for some $y_0$, which cannot be in the image of
$Sq^1$, since $M$ is reduced, so that $Sq^1 y_0 \neq 0$.  We may then assume for induction
that we are 
given $y_i$ such that $Q_1 y_{i} = Sq^1 y_{i-1}$ for $0 \leq i \leq n$, 
and such that $Q_1 y_0 = x$ and $Sq^1 y_n \neq 0$.  The assumption that $M$ is reduced
allows us to extend this another step, completing the induction.  We conclude that
$M$ is not bounded-below, contrary to assumption.

It therefore follows that $Q_1 x \neq 0$.
Then $Sq^1 Q_1 x = 0$ because $M$ is reduced, so $Q_1 x = Sq^1 x_1$
for some $x_1$.  
Again,  $M$  reduced implies that
$Q_1x_1 \neq 0$.  We may assume for induction that we have elements
$x_i$ with $Sq^1 x_i = Q_1 x_{i-1} \neq 0$ for $0 \leq i \leq n$ and $Q_1 x_n \neq 0$.
(Let $x_0 = x$ here.)
Then $M$ reduced implies $Q_1 x_n = Sq^1 x_{n+1}$ for some $x_{n+1}$ and
$Q_1 x_{n+1} \neq 0$, completing the induction.  The $x_i$ generate a
submodule isomorphic to $R$ and the inclusion $R \lra M$ induces a
stable isomorphism.

Now suppose that $M\in \Pic^{(0)}(E(1))$.  By Lemma~\ref{onedim},
$H(M,Q_0) = \Sigma^s\F_2$ for some $s$,
and therefore $M \simeq \Sigma^{s+1} R $.
Finally, observe that the $\Sigma^i R$ are all distinct because 
$H(\Sigma^i R,Q_0) = \Sigma^{i-1} \F_2$.  
\end{proof}

For $\cA(1)$, the argument is a bit more complicated, but the conclusion
is the same.

\begin{prop} 
\label{PiczeroA}
If $M \in \cA(1)\Mod^{(0)}$ and $H(M,Q_0) = \Sigma^s \F_2$ then
$M \simeq \Sigma^{s+1}R$.  Therefore,
$\Pic^{(0)}(\cA(1)) = \{\Sigma^i R\} \iso \Z$.
\end{prop}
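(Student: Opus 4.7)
The plan is to adapt the strategy of the proof of Yu's theorem (Theorem~\ref{thm:proofofYustheorem}) to the $Q_0$-local setting. By suspending we may assume $s = -1$, so that $H(M,Q_0) = \Sigma^{-1}\F_2$, and after replacing $M$ by its reduced part (Proposition~\ref{stableisored}) we may assume $M$ is reduced as an $\cA(1)$-module. Restricting scalars and applying Proposition~\ref{PiczeroE} gives an $E(1)$-module splitting
\[
M|_{E(1)} \iso R \dirsum \bigl(E(1) \tensor V\bigr)
\]
for some bounded-below graded $\F_2$-vector space $V$. The aim is to show $V = 0$, which will yield $M \iso R$ as an $\cA(1)$-module.

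To upgrade the $E(1)$-module structure to an $\cA(1)$-module structure, it remains to specify the action of $Sq^2$, which we parametrize cocycle-theoretically. Fix the $\F_2$-basis $\{x_{-1},x_1,x_2,\dots\}$ of $R$ and any basis of $V$; then $Sq^2$ applied to any basis element decomposes as a sum of contributions in the summands $R$, $Sq^1 V$, $Q_1 V$, and $Sq^1 Q_1 V$ (the $V$-valued part is excluded because $M$ is reduced). These cocycles are constrained by the Adem relations, most importantly $Sq^2 Sq^2 = Sq^1 Q_1$ together with $Q_1 = Sq^1 Sq^2 + Sq^2 Sq^1$, in direct analogy with Lemmas~\ref{xconditions}, \ref{sqtwoaction}, and \ref{Vconditions}.

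The main obstacle will be the case analysis ruling out nonzero vectors in the lowest nonzero degrees of $V$. In the $Q_1$-local case, nonzero elements of $V_{-3}$ or $V_{-2}$ were permitted, accounting for the other members of $\Pic^{(1)}(\cA(1))$ and producing the $\Z/4$ torsion coming from the four-fold periodicity of $P_0$. In the present $Q_0$-local case, only the one-fold periodicity $\Omega M \simeq \Sigma M$ of Theorem~\ref{onefold} is available, so I expect every low-degree candidate for a nonzero element of $V$ to be obstructed by the Adem constraints, forcing $V = 0$ and hence $M \iso R$.

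Granting this, the $\Pic$-statement is immediate: if $M \in \Pic^{(0)}(\cA(1))$, Lemma~\ref{onedim} gives $H(M,Q_0)$ one-dimensional, so $M \simeq \Sigma^{s+1}R$ for a unique integer $s$. The modules $\Sigma^i R$ are mutually stably distinct since $H(\Sigma^i R,Q_0) = \Sigma^{i-1}\F_2$, and the idempotence $\Sigma R \tensor \Sigma R \simeq \Sigma R$ (Theorem~\ref{Libasicprops}) — together with the identification of $\Sigma R$ as the unit of $\Pic^{(0)}(\cA(1))$ from Proposition~\ref{units} — identifies $\Pic^{(0)}(\cA(1))$ with $\Z$ generated by $\Sigma R$.
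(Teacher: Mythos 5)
Your opening moves match the paper exactly: suspend so that $H(M,Q_0) = \Sigma^{-1}\F_2$, pass to the reduced part, and use Proposition~\ref{PiczeroE} on $M|_{E(1)}$ to write $M|_{E(1)} \iso R \dirsum (E(1)\tensor V)$. After that the two arguments diverge completely.

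Your plan is to repeat the cocycle-theoretic bookkeeping of Section~\ref{sec:proofofYustheorem} in the $Q_0$-local setting and deduce $V = 0$ from the Adem constraints. The critical step, however, is left as a prediction: ``I expect every low-degree candidate for a nonzero element of $V$ to be obstructed.'' That is precisely the content that needs a proof, and the heuristic you offer --- that $\Pic^{(0)}$ has no torsion because only one-fold periodicity is available --- is circular: the absence of torsion is what the proposition is asserting, so you cannot invoke it to kill $V$. In the $Q_1$-local case the analogous analysis genuinely \emph{did} permit nonzero $V$ in degrees $-3$ and $-2$, giving the modules $P_2$ and $P_3$; nothing in your sketch shows the corresponding low-degree solutions in the $Q_0$-local case are all obstructed. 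As written, the proposal therefore has a real gap at exactly the point where the work lives.

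The paper takes a different and more economical route that you should compare against. Having identified a cycle $x$ generating $H(M,Q_0)$ with $Sq^1 x = 0$ and $Q_1 x \neq 0$ (hence $Sq^1Sq^2 x \neq 0$), it examines the cyclic submodule $\langle x\rangle$ and splits on whether $Sq^2Sq^1Sq^2 x$ vanishes. When it does not vanish, $\langle x\rangle \iso \Sigma^{-1}\cA(1)\modmod\cA(0)$, and the long exact sequences in Margolis homology show that $M/\langle x\rangle$ satisfies the same hypotheses shifted by $4$ degrees, so a copy of $R$ is built inside $M$ inductively. When it does vanish, the long exact sequences force $M/\langle x\rangle$ to be $Q_1$-local with one-dimensional $Q_1$-homology, so Yu's theorem (Theorem~\ref{Yustheorem}) identifies it as a suspension of some $P_n$; inspecting the action of $Sq^2$ on the $Q_1$-generator of $P_n$ produces a contradiction with the degree-by-degree structure of $\langle x\rangle$. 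This way the already-proved hard theorem is used as a black box to rule out the bad case, rather than redoing a parallel cocycle calculation from scratch. Your closing paragraph deducing the $\Pic$-statement from the classification is fine and matches the paper.
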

 
\begin{proof}
By suspending appropriately, we may assume that
$M \in \cA(1)\Mod^{(0)}$ and $H(M,Q_0) = \Sigma^{-1}\F_2$.
We may also assume that $M$ is reduced:
 $Sq^2 Sq^2 Sq^2$ acts as $0$ on $M$.
Let $0 \neq \langle[x]\rangle = H(M,Q_0)$, so that $Sq^1x = 0$ and $x\notin \Image(Sq^1)$.

Let $M \iso M_0 \dirsum M_1$ as an $E(1)$-module,
where $M_0$ is a reduced $E(1)$-module and $M_1$ is $E(1)$-free.
Then $M_0$ is in $\Pic^{(0)}(E(1))$ with 
$H(M_0,Q_0) = H(M,Q_0) = \langle [x] \rangle$.
By the preceding Proposition, $M_0 \iso R$.  
We may choose $x \in M_0$.
In particular, $Q_1 x \neq 0$.
Since $Sq^1 x = 0$, $Q_1 x \neq 0$ implies that  $Sq^1 Sq^2 x \neq 0$.
There are two possibilities:
\begin{enumerate}
\item $Sq^2Sq^1Sq^2 x \neq 0$
\item $Sq^2Sq^1Sq^2 x = 0$
\end{enumerate}
In the first case, the submodule $\langle x\rangle$ is $\Sigma^{-1}\cA(1)\modmod\cA(0)$ since
$M$ is reduced and $Sq^1x=0$.  The long exact sequences in
$Q_k$-homology induced by the short exact sequence
\[
0 \lra \langle x \rangle \lra M \lra M/\langle x\rangle \lra 0
\]
imply that $H(M/\langle x \rangle, Q_1) = 0$ and 
$H(M/\langle x \rangle, Q_0) = \langle[y]\rangle$ with $Q_0y = Sq^2Sq^1Sq^2 x$.
Then $M/\langle x\rangle$ satisfies the same hypotheses as $M$ shifted up by 4 degrees.
We can thus inductively construct $R \lra M$ inducing an isomorphism in
$Q_0$ and $Q_1$ homology.  Hence $M$ is stably isomorphic to $R$ as claimed.

The second alternative implies that the submodule generated  by $x$ is
spanned by $x$, $Sq^2x$ and $Sq^1Sq^2x$.  This has $Q_1$ homology $\langle[Sq^2x]\rangle$.
The long exact homology sequence for
\[
0 \lra \langle x \rangle \lra M \lra M/\langle x\rangle \lra 0
\]
then implies that $H(M/\langle x\rangle,Q_0) = 0$ and
$H(M/\langle x\rangle,Q_1) = \langle[y]\rangle$ with $Q_1 y = Sq^2x$.  By Yu's theorem
(Theorem~\ref{Yustheorem}), $M/\langle x\rangle$ must be a suspension of $P_n$ for
some $n$.  (It is actually isomorphic to $\Sigma^iP_n$, not just stably equivalent
to it, because it is reduced,
being a quotient of the reduced module $M$.)
Further, if we let $y\in M$ be a class whose image in $M/\langle x \rangle$
generates $H(M/\langle x \rangle, Q_1)$
then $Q_1 y = Sq^2 x$.  Now $Sq^1 y = 0$ because this is so in each $P_n$ and because
$x$, which is in the same degree as $Sq^1y$, is not in the image of $Sq^1$.  Thus,
we must have $Sq^1 Sq^2 y = Sq^2 x$.  This is impossible. In $P_0$, $Sq^2y=0$,
while in $P_n$, $1\leq n \leq 3$, $Sq^2y$ is in the image of $Sq^1$.  Since
$\langle x \rangle$ is zero in this degree, the same holds in $M$.  This contradiction
shows that the second alternative does not happen, proving the theorem.

Now suppose that $M\in \Pic^{(0)}(\cA(1))$.  By Lemma~\ref{onedim},
$H(M,Q_0) = \Sigma^s\F_2$ for some $s$,
and therefore $M \simeq \Sigma^{s+1} R $.
Finally, observe that the $\Sigma^i R$ are all distinct because 
$H(\Sigma^i R,Q_0) = \Sigma^{i-1} \F_2$.  
\end{proof}

From these last four results, we have the converse of Lemma~\ref{onedim}.

\begin{corollary}
\label{onedimconverse}
A module $M \in B\Mod^{(k)}$ is in $\Pic^{(k)}(B)$ iff $H(M,Q_k)$ is one dimensional.
\end{corollary}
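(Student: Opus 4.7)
The proof will simply assemble the four preceding structural results. The forward direction, that an invertible module has one-dimensional $Q_k$-homology, is just a restatement of Lemma~\ref{onedim}, so all the work is in the converse.

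For the converse, I would split into the four cases $(B,k) \in \{E(1),\cA(1)\} \times \{0,1\}$. Suppose $M \in B\Mod^{(k)}$ with $H(M,Q_k) = \Sigma^s \F_2$ one-dimensional. In each of the four cases, the corresponding classification result, namely Theorem~\ref{Yuslemma} for $(E(1),1)$, Theorem~\ref{Yustheorem} for $(\cA(1),1)$, Proposition~\ref{PiczeroE} for $(E(1),0)$, and Proposition~\ref{PiczeroA} for $(\cA(1),0)$, produces an explicit stable equivalence of $M$ with one of the following: $\Sigma^s P_0$, $\Sigma^{s-2b}P_b$ for some $b$, or $\Sigma^{s+1} R$. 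These representatives are all visibly members of $\Pic^{(k)}(B)$: the suspensions $\Sigma^i R$ are invertible because $\Sigma R$ is the unit in $\St(B\Mod^{(0)})$ (Proposition~\ref{units}), and the $\Sigma^i P_n$ are invertible because $P_0$ is the unit in $\St(B\Mod^{(1)})$ with $P_n \tensor P_{-n} \hequiv P_0$ by Theorem~\ref{multmain}. Since $\Pic^{(k)}(B)$ depends only on the stable equivalence class, $M$ itself lies in $\Pic^{(k)}(B)$.

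There is essentially no obstacle here: the work has already been done in proving Yu's theorem (the hardest of the four) and its three companions. The corollary's content is just the observation that those classification theorems exhaust $\Pic^{(k)}(B)$, because every module stably equivalent to one of the catalogued representatives is invertible. The one point worth spelling out, to keep the reader from thinking the statement is circular, is that the classification theorems apply to an \emph{a priori} arbitrary $Q_k$-local $M$ with one-dimensional $H(M,Q_k)$, without assuming invertibility; it is only the conclusion of those theorems that exhibits $M$ as stably equivalent to an explicit invertible module.
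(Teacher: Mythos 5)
Your argument is correct and is exactly what the paper intends: the paper states the corollary as following immediately from Lemma~\ref{onedim} together with Theorems~\ref{Yuslemma} and \ref{Yustheorem} and Propositions~\ref{PiczeroE} and \ref{PiczeroA}, and you have simply spelled out the assembly. Your remark about non-circularity (that the classification theorems take an arbitrary $Q_k$-local module with one-dimensional Margolis homology as input, not an invertible one) is the right point to flag and matches the paper's earlier comment preceding Lemma~\ref{onedim}.
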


It is useful to have explicit forms for these isomorphisms between 
the torsion-free quotient of $\Pic^{(k)}(B)$ and $\Z$.

\begin{corollary}
For $M \in \Pic^{(k)}(B)$, let $d_k(M)$ be defined by $H(M,Q_k) = \Sigma^{d_k(M)}\F_2$.
Then $d_k : \Pic^{(k)}(B) \lra \Z$ is a homomorphism.   It is an isomorphism if
$k=0$ or $B=E(1)$.  When $k=1$ and $B=\cA(1)$, $\Ker(d_1) = \{\Sigma^{-2i}P_i\}
\iso \Z/(4)$.
\end{corollary}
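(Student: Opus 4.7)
The plan is to deduce all three assertions from the K\"unneth theorem for Margolis homology together with the explicit descriptions of $\Pic^{(k)}(B)$ already established in Theorems~\ref{Yuslemma} and~\ref{Yustheorem} and Propositions~\ref{PiczeroE} and~\ref{PiczeroA}. Since Margolis homology is a stable invariant and is one-dimensional on $\Pic^{(k)}(B)$ (Lemma~\ref{onedim}), $d_k$ is well defined. The homomorphism property is immediate: the K\"unneth isomorphism $H(M \tensor N, Q_k) \iso H(M, Q_k) \tensor_{\F_2} H(N, Q_k)$ applied to invertible modules gives $d_k(M \tensor N) = d_k(M) + d_k(N)$, and by Proposition~\ref{units} the units are $\Sigma R$ and $P_0$, both of which have Margolis homology in degree $0$, so $d_k$ sends the identity to $0$.

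For $k = 0$ (either $B$) and for $(k,B) = (1, E(1))$, the target Picard group is infinite cyclic, generated by $R$ (resp.\ $P_0$), with generic element $\Sigma^i R$ (resp.\ $\Sigma^i P_0$). A direct computation from the diagrams in Section~\ref{PntoPone} gives $d_0(\Sigma^i R) = i - 1$ and $d_1(\Sigma^i P_0) = i$, so in both situations $d_k$ is an isomorphism onto $\Z$.

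The only substantive case is $(k,B) = (1, \cA(1))$, where Theorem~\ref{Yustheorem} gives $\Pic^{(1)}(\cA(1)) = \{\Sigma^a P_b : a \in \Z,\, b \in \Z/(4)\}$. The one numerical input required is $d_1(P_1) = 2$: a direct inspection of $P_1 = (x) \subset \F_2[x]$ shows $Q_1 x^{2k-1} = x^{2k+2}$ and $Q_1 x^{2k} = 0$, so $H(P_1, Q_1)$ is spanned by the class of $x^2$. Combining this with additivity and the stable equivalence $P_b \hequiv P_1^{\tensor b}$ from Theorem~\ref{multmain} gives $d_1(\Sigma^a P_b) = a + 2b$, whose kernel is $\{\Sigma^{-2b} P_b : b \in \Z/(4)\}$. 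The main bookkeeping concern, which I expect to be the only mild obstacle, is the final well-definedness check: one uses $P_{b+4} \iso \Sigma^8 P_b$ (Theorem~\ref{multmain}) to verify that the formula $\Sigma^{-2b} P_b$ descends to a function of $b \bmod 4$, so that the kernel is genuinely cyclic of order four rather than merely having $\Z/(4)$ as a quotient.
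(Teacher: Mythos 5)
Your proof is correct and follows essentially the same approach the paper sketches: K\"unneth gives the homomorphism property, and the explicit descriptions in Theorems~\ref{Yuslemma} and~\ref{Yustheorem} and Propositions~\ref{PiczeroE} and~\ref{PiczeroA} identify every element of $\Pic^{(k)}(B)$ as a suspension of $R$ or of some $P_b$, from which the image and kernel of $d_k$ are read off directly. Your explicit computation $d_1(P_1)=2$ and the well-definedness check via $P_{b+4}\iso\Sigma^8 P_b$ simply fill in the routine details the paper leaves implicit.
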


\begin{proof}
The K\"unneth isomorphism implies that $d_k$ is a homomorphism.
The remainder follows directly from Theorems~\ref{Yustheorem} and
\ref{Yuslemma}, and Propositions~\ref{PiczeroE} and \ref{PiczeroA}.
\end{proof}

When $k=1$ and $B=\cA(1)$ we need another invariant to detect $\Ker(d_1)$.
It is possible to define it directly in terms of $M$
by considering divisibility of elements in 
$\Ext_{\cA(1)}^1(M,\F_2)$, but this is cumbersome to define, so
we content ourselves with an invariant defined in terms of $M^{\red}$.

\begin{prop}
If $M \in \Pic^{(1)}(\cA(1))$,  let
\begin{itemize}
\item $c$ be the connectivity (bottom non-zero degree) of $M^\red$,
\item $e = \dim(Sq^2(M^\red_c))$, and
\item $f = \dim(Sq^2Sq^2(M^\red_c))$.
\end{itemize}
(Here $\dim$ refers to dimension as an $\F_2$ vector space.)
Let $t_1(M) =d_1(M)-c-e+f$.
Then $t_1: \Pic^{(1)}(\cA(1)) \lra \Z/(4)$
 is a homomorphism and
$M \simeq \Sigma^{d_1(M)-2t_1(M)} P_{t_1(M)}$.
\end{prop}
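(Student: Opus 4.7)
The plan is to leverage Theorem~\ref{Yustheorem}: every $M \in \Pic^{(1)}(\cA(1))$ is stably equivalent to $\Sigma^{d_1(M) - 2b}P_b$ for a unique $b \in \Z/(4)$, and we want to show $b = t_1(M)$. First, I would verify that the triple $(c,e,f)$, and hence $t_1$, is an invariant of the stable equivalence class. This follows because $M^\red$ is unique up to isomorphism (Proposition~\ref{reduced}) and stable equivalence is detected on reduced parts (Proposition~\ref{stableisored}); so the connectivity $c$ and the dimensions $e, f$ of the images of $Sq^2, Sq^2Sq^2$ on $M^\red_c$ are all intrinsic. Moreover, $t_1$ is suspension-invariant: a $k$-fold suspension shifts both $c$ and $d_1$ by $k$ while leaving $e$ and $f$ unchanged, so $t_1(\Sigma^k M) = t_1(M)$.

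Combining suspension invariance with Theorem~\ref{Yustheorem}, it suffices to compute $t_1(P_n)$ for $n \in \{0,1,2,3\}$ and verify $t_1(P_n) \equiv n \pmod{4}$. This I would do by direct inspection of Figure~\ref{modules} (supplemented, if needed, by the extension sequences of Theorem~\ref{main}). Each $P_n$ is already reduced with a one-dimensional bottom, so $e_n, f_n \in \{0,1\}$. A short check yields $(c_n, e_n, f_n)$ equal to $(-1, 1, 0)$, $(1, 1, 1)$, $(2, 1, 1)$, $(3, 1, 1)$ for $n = 0, 1, 2, 3$ respectively. Combined with $d_1(P_n) = 2n$ (which is the calibration built into Theorem~\ref{Yustheorem}, or can be read off from the diagrams), the formula $t_1 = d_1 - c - e + f$ gives $t_1(P_n) = n$ in each case. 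This establishes $M \hequiv \Sigma^{d_1(M) - 2t_1(M)} P_{t_1(M)}$.

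The homomorphism property then follows easily from the multiplicative structure. Since the K\"unneth isomorphism for $Q_1$-homology gives $d_1(M\tensor N) = d_1(M) + d_1(N)$, and Theorem~\ref{multmain} gives $P_i \tensor P_j \hequiv P_{i+j}$ with indices interpreted mod~$4$, substituting the normal form just proved into $M\tensor N$ yields
\[
M\tensor N \hequiv \Sigma^{d_1(M)+d_1(N) - 2(t_1(M)+t_1(N))} P_{t_1(M)+t_1(N)},
\]
from which the uniqueness clause in Theorem~\ref{Yustheorem} forces $t_1(M\tensor N) \equiv t_1(M) + t_1(N) \pmod{4}$.

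The main obstacle is the case-by-case diagram chase for $t_1(P_n)$, particularly for $P_2$ and $P_3$ whose bottom classes require careful reading of Figure~\ref{modules} (or reconstruction from the extensions in Theorem~\ref{main}) to confirm that $Sq^2$ and $Sq^2 Sq^2$ both act nontrivially on them; the remaining arithmetic and the homomorphism step are then routine.
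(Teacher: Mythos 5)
Your proof takes essentially the same route as the paper's: tabulate the invariants $(c,d_1,e,f)$ for the canonical representatives $P_n$ (equivalently $\Sigma^i P_n$), read off $t_1(P_n)=n$, and deduce the stable equivalence and the homomorphism property from Theorem~\ref{Yustheorem} and Theorem~\ref{multmain}. Your preliminary observations that $(c,e,f)$ depends only on the stable isomorphism class (via uniqueness of $M^\red$) and that $t_1$ is suspension-invariant are correct and spell out what the paper leaves implicit.

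However, you have a computational error in the table for $n=1$. The bottom class of $P_1 = H^*BC_2 = (x)\subset\F_2[x]$ is $x$ in degree $1$, and $Sq^2 x = 0$ (indeed $Sq^i x = 0$ for $i\ge 2$), so $e_1 = 0$ and $f_1 = 0$, not $(e_1,f_1)=(1,1)$ as you assert. This can also be read directly from Figure~\ref{modules}: the degree-$1$ class of $P_1$ supports only a $Sq^1$. You are saved by the fortunate cancellation $e_1 - f_1 = 1 - 1 = 0 = 0 - 0$, so the value $t_1(P_1)=1$ comes out right anyway, but the intermediate claim is false and should be corrected: the table entries are $(e,f)=(1,0),\,(0,0),\,(1,1),\,(1,1)$ for $n=0,1,2,3$. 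The rest of your argument, including the observation that the suspension correction $P_{a+b}\iso\Sigma^8 P_{a+b-4}$ is absorbed automatically by the factor $\Sigma^{-2t_1}$ when one reduces the index mod $4$, is sound.
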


\begin{proof}
It is simplest to reverse engineer this.  We compute these invariants
for $\Sigma^i P_n$:

\begin{center}
\begin{tabular}{|l|rrrr|}
\hline
 & $\Sigma^i P_0$
 &$ \Sigma^i P_1$
 &$ \Sigma^i P_2$
 &$ \Sigma^i P_3$
\\
\hline
$c$ & i-1 & i+1 & i+2 & i+3 \\
$d_1$ & i & i+2 & i+4 & i+6 \\
$e$ & 1 & 0 & 1 & 1 \\
$f$ & 0 & 0 & 1 & 1 \\
\hline
$t_1 = d_1-c-e+f$ & 0 & 1 & 2 & 3 \\
\hline
\end{tabular}
\end{center}
Theorem~\ref{multmain} shows that $t_1$ is a homomorphism.  The 
equivalence  between
$M$ and $ \Sigma^{d_1(M)-2t_1(M)} P_{t_1(M)}$ is evident from
the table above.
\end{proof}

\section{The homomorphisms from $\Pic$ to $\Pic^{(k)}$}
\label{sec:Pichom}

Over a finite dimensional graded Hopf algebra, the Picard group always contains
suspension and loops.  This accounts for the $\Z\dirsum\Z$
found by Adams and Priddy (Theorems~\ref{AdamsandPriddyE}
and \ref{AdamsandPriddy}) in $\Pic(E(1))$ and $\Pic(\cA(1))$.
In the Picard groups of the localized subcategories $B\Mod^{(k)}$
these become dependent:  
$\Sigma(\LL_0\F_2) = \Omega(\LL_0\F_2) $  and
$\Sigma^3(\LL_1\F_2) = \Omega(\LL_1\F_2) $  over $E(1)$,
for example.

Together, the functors $\LL_i$  give an embedding of
$\Pic$ into the localized Picard groups.

\begin{prop}
\label{piceone}
Each $\LL_k :\Pic(E(1)) \lra \Pic^{(k)}(E(1))$ is an epimorphism.  Their product
$\LL$, mapping  $\Pic(E(1))$ to $ \Pic^{(0)}(E(1)) \dirsum \Pic^{(1)}(E(1))$,
is a monomorphism with cokernel $\Z/(2)$.
With respect to the basis $\{ \Sigma \F_2, \Omega \F_2\}$ of $\Pic$
we have
\[
\xymatrix{
\Pic(E(1)) 
\ar^{\LL}[d]
\ar^{\,\,\,\,\begin{bmatrix} 1 & 1 \\ 1 & 3 \end{bmatrix}}[rrd]
&
&
\\
\Pic^{(0)}(E(1)) \dirsum \Pic^{(1)}(E(1))
\ar_<<<<<<<<{d_0 \dirsum d_1}[rr]
& & 
\Z \dirsum \Z
\\
}
\]
\end{prop}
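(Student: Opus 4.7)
The plan is to verify each $\LL_k$ is a homomorphism of Picard groups, compute the matrix of $(d_0\dirsum d_1)\circ\LL$ on the generators of $\Pic(E(1))$ supplied by Theorem~\ref{AdamsandPriddyE}, and then read off injectivity, cokernel, and surjectivity of each $\LL_k$ from that $2\times 2$ integer matrix.

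To see that $\LL_k:\Pic(E(1))\lra\Pic^{(k)}(E(1))$ is a homomorphism, I would use that $\LL_k M = \LL_k\F_2\tensor M$ together with Theorem~\ref{Libasicprops}: $\LL_k M$ is $Q_k$-local, and since $\LL_k$ preserves tensor products up to stable equivalence, one has $\LL_k M\tensor\LL_k N\hequiv\LL_k(M\tensor N)$. Consequently, whenever $M\tensor N\hequiv\F_2$, the module $\LL_k M$ has tensor inverse $\LL_k N$ in $\St(E(1)\Mod^{(k)})$. Composing with the isomorphism $d_k$, it is enough to evaluate $d_k\circ\LL_k$ on the basis $\{\Sigma\F_2,\Omega\F_2\}$.

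For $\Sigma\F_2$ the values are immediate: $\LL_0(\Sigma\F_2)=\Sigma R\tensor\Sigma\F_2\iso\Sigma^2 R$, so $d_0=1$; and $\LL_1(\Sigma\F_2)=P_0\tensor\Sigma\F_2\iso\Sigma P_0$, so $d_1=1$. For $\Omega\F_2$, which is the augmentation ideal $I\subset E(1)$ with $\F_2$-basis $\{Q_0,Q_1,Q_0Q_1\}$ in degrees $1,3,4$, I would apply the fact that $\LL_k$ preserves $Q_k$-homology (Theorem~\ref{Libasicprops}) to reduce to locating $H(I,Q_k)$. A direct look at kernels and images of $Q_0$ and $Q_1$ on this three-dimensional module gives $H(I,Q_0)=\Sigma\F_2$ and $H(I,Q_1)=\Sigma^3\F_2$, hence $d_0=1$ and $d_1=3$. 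This assembles to the matrix $\begin{bmatrix}1 & 1\\ 1 & 3\end{bmatrix}$.

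Finally, this matrix has determinant $2$ and Smith normal form $\mathrm{diag}(1,2)$, so the induced homomorphism $\Z\dirsum\Z\lra\Z\dirsum\Z$ is injective with cokernel $\Z/(2)$; since $d_0\dirsum d_1$ is an isomorphism, $\LL$ itself is a monomorphism with cokernel $\Z/(2)$. The $k$-th row of the matrix has gcd $1$, so $d_k\circ\LL_k$ is surjective onto $\Z$, whence each $\LL_k$ is an epimorphism. There is no serious obstacle; the only computational input is the elementary determination of $H(I,Q_i)$, and the main conceptual point to confirm is that the $\LL_k$ respect the group operation, which follows cleanly from the idempotence and tensor-preservation of $\LL_k$ recorded in Theorem~\ref{Libasicprops}.
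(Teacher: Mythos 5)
Your proof is correct and follows essentially the same approach as the paper: compute the $2\times 2$ integer matrix of $(d_0\dirsum d_1)\circ\LL$ on the basis $\{\Sigma\F_2,\Omega\F_2\}$ and read off the monomorphism, cokernel, and surjectivity claims. The only minor variation is in the $\Omega\F_2$ column: the paper uses the periodicity results ($\Omega\Sigma R\hequiv\Sigma^2R$, $\Omega P_0\hequiv\Sigma P_1\hequiv\Sigma^3 P_0$ over $E(1)$), whereas you compute $H(I,Q_k)$ directly from the three-dimensional module $I$ and invoke the invariance of $Q_k$-homology under $\LL_k$; both routes are sound and give the same matrix.
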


\begin{proof}
Explicitly, $\LL(M) = (\LL_0M,\LL_1M) = (\Sigma R \tensor M, P_0 \tensor M)$.
We simply compute:  
\[
d_0(\Sigma R \tensor \Sigma \F_2) =  d_0(\Sigma^2 R) = 1
\]
and
\[
d_1(P_0 \tensor \Sigma \F_2) = d_1(\Sigma P_0) = 1,
\]
while
\[
d_0(\Sigma R \tensor \Omega \F_2) = d_0(\Omega \Sigma R)  = 
d_0(\Sigma^2 R) = 1
\]
and
\[
d_1(P_0 \tensor \Omega \F_2) = d_1(\Omega P_0) = d_1(\Sigma^3 P_0) = 3.
\]
\end{proof}

Over $\cA(1)$ we also have the torsion summands to consider.

\begin{prop}
\label{picaone}
The restriction maps 
\[
\Pic(\cA(1))\lra \Pic(E(1))
\]
 and
\[
\Pic^{(k)}(\cA(1))\lra \Pic^{(k)}(E(1))
\]
 induce isomorphisms from
the torsion free quotients of their domains to their co\-do\-mains, and
commute with $\LL$.
Each $\LL_k : \Pic(\cA(1)) \lra \Pic^{(k)}(\cA(1))$ is an epimorphism.
With respect to the basis $\{ \Sigma \F_2, \Omega \F_2, J\}$ of $\Pic$,
the homomorphism
$\LL : \Pic(\cA(1)) \lra \Pic^{(0)}(\cA(1)) \dirsum \Pic^{(1)}(\cA(1))$
is
\[
\xymatrix{
\Pic(\cA(1)) 
\ar^{\LL}[d]
\ar^{\,\,\,\,\begin{bmatrix} 1 & 1 & 0 \\ 1 & 3 & 0\\ 0 & \overline{1} & \overline{2}
\end{bmatrix}}[rrd]
&
&
\\
\Pic^{(0)}(\cA(1)) \dirsum \Pic^{(1)}(\cA(1))
%% \ar_<<<<<<<<{d_0 \dirsum d_1}[rr]
\ar_>>>>>>>>>>>{\begin{bmatrix} d_0 & 0 \\ 0 & d_1\\ 0 & t_1
\end{bmatrix}}[rr]
& & 
\Z \dirsum \Z \dirsum \Z/(4)
\\
}
\]
with $\overline{k}$ denoting the coset $k + (4)$.  The cokernel of $\LL$ is
$\Z/(4)$.
\end{prop}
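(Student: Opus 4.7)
The plan closely mirrors the proof of Proposition~\ref{piceone}, with one genuinely new computation (the $J$-column) and one small piece of linear algebra at the end.

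First I would handle the restriction statements. The Adams--Priddy generators $\Sigma\F_2$ and $\Omega\F_2$ have the same meaning over $\cA(1)$ and $E(1)$, so the torsion-free summands match under restriction. The torsion generators $J$ of $\Pic(\cA(1))$ and $\Sigma^{-2}P_1$ of $\Pic^{(1)}(\cA(1))$ must restrict to zero, because their $E(1)$-targets $\Pic(E(1))$ and $\Pic^{(1)}(E(1))$ are torsion-free (Theorems~\ref{AdamsandPriddyE} and~\ref{Yuslemma}); the identification $P_1|_{E(1)}\hequiv \Sigma^2 P_0$ recalled in Remark~\ref{error} makes the second vanishing explicit. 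Commutativity with $\LL$ is formal, since $\LL_k^B(M)=\LL_k^B(\F_2)\tensor M$ and the modules $\Sigma R$ and $P_0$ used to define $\LL_0$ and $\LL_1$ have the same underlying action whether viewed over $\cA(1)$ or $E(1)$.

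Next, for surjectivity of each $\LL_k$ and the first two columns of the matrix, I would just record that $\LL_0(\Sigma\F_2)\hequiv\Sigma^2 R$ generates $\Pic^{(0)}(\cA(1))\iso\Z$ (by Proposition~\ref{PiczeroA}), while $\LL_1(\Sigma\F_2)\hequiv\Sigma P_0$ and $\LL_1(\Omega\F_2)\hequiv\Omega P_0\hequiv\Sigma P_1$ correspond to $(1,0)$ and $(3,1)$ in $\Z\dirsum\Z/(4)$ under Theorem~\ref{Yustheorem}; these already generate $\Pic^{(1)}(\cA(1))$, and reading off $t_1=0$ and $t_1=1$ from Theorem~\ref{multmain} gives the first two columns.

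The main obstacle is the column for $J$. Since $J$ has one-dimensional $Q_0$-homology concentrated in degree $0$, the K\"unneth theorem gives $H(\Sigma R\tensor J,Q_0)=\F_2$ in degree $0$, and Proposition~\ref{PiczeroA} forces $\LL_0(J)\hequiv \Sigma R$, i.e.\ $d_0=0$. For $\LL_1(J)$ the key observation is that $J$ is $2$-torsion in $\Pic(\cA(1))$ (Theorem~\ref{AdamsandPriddy}), so $\LL_1(J)$ is $2$-torsion in $\Pic^{(1)}(\cA(1))=\Z\dirsum\Z/(4)$; the $2$-torsion subgroup consists of $0$ and the class of $\Sigma^{-4}P_2$. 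K\"unneth gives $H(\LL_1(J),Q_1)=H(P_0,Q_1)\tensor H(J,Q_1)\neq 0$, so $\LL_1(J)$ is non-trivial, and must therefore be stably equivalent to $\Sigma^{-4}P_2$, contributing $(d_1,t_1)=(0,\overline{2})$.

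Finally, the cokernel is a short calculation. Using the first column to kill the first $\Z$-coordinate of $\Z\dirsum\Z\dirsum\Z/(4)$, the remaining presentation is $\Z\dirsum\Z/(4)$ modulo the images $(2,\overline{1})$ and $(0,\overline{2})$ of the second and third columns; the relation $(0,\overline{2})=0$ collapses $\Z/(4)$ to $\Z/(2)$, after which $(2,\overline{1})$ forces $\overline{1}=2$, so the generator of $\Z$ acquires order $4$ and the quotient is cyclic of order four. Hence $\mathrm{Cok}(\LL)\iso\Z/(4)$.
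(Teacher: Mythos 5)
Your overall structure closely follows the paper's, and most of your steps are sound: the restriction statements, the formal commutativity with $\LL$, the first two columns of the matrix, and the Smith normal form computation of the cokernel are all correct (and essentially what the paper does).

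There is, however, a genuine gap in your determination of $\LL_1(J)$. You correctly observe that $\LL_1$ is a group homomorphism, so $\LL_1(J)$ is $2$-torsion in $\Pic^{(1)}(\cA(1)) \iso \Z \dirsum \Z/(4)$, and combined with $d_1(\LL_1(J))=0$ this narrows the possibilities to the unit class $[P_0]$ and the class $[\Sigma^{-4}P_2]$. But your argument that $\LL_1(J)$ is nontrivial because $H(\LL_1(J),Q_1)\neq 0$ does not distinguish these two: the identity element of $\Pic^{(1)}(\cA(1))$ under $\tensor$ is $P_0$, not the zero module, and $H(P_0,Q_1)\iso\F_2$ is also nonzero. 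Indeed, by Lemma~\ref{onedim} \emph{every} element of $\Pic^{(1)}$ has one-dimensional $Q_1$-homology, so the K\"unneth computation carries no information here. You need an honest calculation to rule out $\LL_1(J)\hequiv P_0$. The paper does this by tensoring the short exact sequence $0 \to M_2 \to P_2 \to \Sigma^4 R \to 0$ of Theorem~\ref{main} with $P_0$: since $P_0\tensor R$ is free by the K\"unneth theorem and Theorem~\ref{stableiso}, one gets $P_0\tensor J = P_0\tensor\Sigma^{-4}M_2 \hequiv P_0\tensor\Sigma^{-4}P_2 \hequiv \Sigma^{-4}P_2$. With this fix your argument is complete; the rest matches the paper's.
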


\begin{proof}
Again, we simply compute.  The $d_0$ and $d_1$ calculations are the same as for $E(1)$.
This implies the first claim and gives the upper left two by two submatrix.  For the
remainder, we first compute $\LL_1$.   We have
$\LL_1(\Sigma \F_2) = \Sigma P_0$, which projects to $\overline{0}$
in the $\Z/(4)$ summand.  
We also have
$\LL_1(\Omega \F_2) = \Omega P_0 = \Sigma^1 P_1$, which projects to $\overline{1}$
in the $\Z/(4)$ summand.   Next, 
\[
d_0(\LL_0(J)) = d_0(\Sigma R \tensor J) = 0
\]
\[
d_1(\LL_1(J)) = d_1(P_0 \tensor J) =  0.
\]
Finally,  $P_0 \tensor J $ is stably isomorphic to $\Sigma^{-4} P_2$.  
This follows by tensoring the short exact sequence containing $M_2 = \Sigma^4 J$
of Theorem~\ref{main} with $P_0$.  Since $P_0 \tensor R$ is free by
Theorem~\ref{stableiso}, this
gives an equivalence $P_0 \tensor J = P_0 \tensor \Sigma^{-4} M_2
\hequiv P_0 \tensor \Sigma^{-4} P_2 
\hequiv  \Sigma^{-4} P_2 $.
%% 
%% To verify this,
%% write 
%% \[
%% J = \Sigma^{-2} \cA(1)/(Sq^3) = \{ 1, Sq^1, Sq^2, Sq^2Sq^1, Sq^2Sq^2\}.
%% \]
%% Observe that $(P_0 \tensor J)^{\tensor(2)} \simeq P_0^{\tensor(2)} \tensor J^{\tensor(2)}
%% \simeq P_0 \tensor \F_2 \simeq P_0$, so that $P_0\tensor J$ is either $P_0$ or
%% $\Sigma^{-4}P_2$.  That it is the latter is confirmed by finding that the submodule
%% spanned by $\{ x^0 \tensor j \st j \in J\}$ together with
%% $\{ x^{-1}\tensor Sq^2\} \cup \{x^{i+2}\tensor Sq^2 + x^i \tensor Sq^2Sq^2 \st i \geq -1\}$
%% is isomorphic to $\Sigma^{-4}P_2$.

Determination of the cokernel is a simple Smith Normal Form calculation.
\end{proof}

\section{Idempotents and localizations}
\label{sec:idempotents}

Again let $B$ be either $E(1)$ or $\cA(1)$.
In this section we show
that $\LL_0$ and $\LL_1$ are essentially unique, in that the only 
stably idempotent modules in $B\Mod^b$ are ones we have already seen.

\begin{theorem}
\label{idempotents}
If $M \in B\Mod^b$ is stably idempotent then $M$ is stably equivalent to
one of $0$,
$\F_2$, $P_0$, $\Sigma R$,
or $P_0 \dirsum \Sigma R$.
\end{theorem}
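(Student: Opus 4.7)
The plan is to apply both Margolis localization functors to the idempotence relation, classify the stably idempotent modules in each local subcategory, and then assemble the answer via the fundamental triangle of Theorem~\ref{uniquetriangle}.

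Since $\LL_0$ and $\LL_1$ preserve tensor products up to stable equivalence (Theorem~\ref{Libasicprops}), applying them to $M \tensor M \hequiv M$ gives $\LL_i M \tensor \LL_i M \hequiv \LL_i M$ in $B\Mod^{(i)}$, so each $\LL_i M$ is stably idempotent locally. The K\"unneth formula then forces $H(\LL_i M, Q_i)$ to be tensor idempotent as a graded $\F_2$-vector space, and a bottom-degree-then-induction argument shows that any bounded-below graded $\F_2$-space $V$ with $V \tensor V \iso V$ is either $0$ or $\F_2$ concentrated in degree $0$. If $H(\LL_i M, Q_i) = 0$ then $\LL_i M \hequiv 0$ by Theorem~\ref{stableiso}; otherwise Corollary~\ref{onedimconverse} places $\LL_i M$ in the abelian group $\Pic^{(i)}(B)$, and since the only idempotent in an abelian group is the identity, Theorems~\ref{Yuslemma}, \ref{Yustheorem} and Propositions~\ref{PiczeroE}, \ref{PiczeroA} force $\LL_i M \hequiv \LL_i \F_2$.

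By Theorem~\ref{uniquetriangle}, $M$ is stably determined by the triangle $\LL_0 M \lra M \lra \LL_1 M$, equivalently by an extension class $e(M) \in \Ext^1_B(\LL_1 M, \LL_0 M)$. The four possibilities for the pair $(\LL_0 M, \LL_1 M)$ yield three cases immediately: $(0,0)$ gives $M \hequiv 0$, $(\Sigma R, 0)$ gives $M \hequiv \Sigma R$, and $(0, P_0)$ gives $M \hequiv P_0$. The remaining case, $(\Sigma R, P_0)$, demands an extension analysis.

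In this final case, since $R$ is $Q_0$-local Theorem~\ref{onefold} gives $\Omega R \hequiv \Sigma R$, hence $\Omega^{-1} \Sigma R \hequiv R$ and $\Ext^1_B(P_0, \Sigma R) \iso [P_0, R]_{\St}$. The main obstacle is to show this stable hom group has exactly two elements. My approach would be to feed the short exact sequence $0 \lra \F_2 \lra P_0 \lra R \lra 0$ of Proposition~\ref{key-sequence} into $\Hom_{\St}(-,R)$, exploiting $[\F_2, R]_{\St} = 0$ (since $R$ vanishes in degree $0$) together with the explicit structure of $R$ to control the remaining terms. Given this, the two extension classes are realised by the split module $P_0 \dirsum \Sigma R$ (which is idempotent by the orthogonality $\Sigma R \tensor P_0 \hequiv 0$ of Theorem~\ref{Libasicprops}) and by the non-split extension $\F_2$, sitting in the canonical triangle of Proposition~\ref{key-sequence}, completing the list.
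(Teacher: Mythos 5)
Your proof is correct and follows essentially the same route as the paper: constrain $H(M,Q_i)$ to $0$ or $\F_2$ via the K\"unneth formula, split into the four cases by which of these vanish, handle the first three via the $\Pic^{(i)}$ classification, and in the mixed case use the canonical triangle plus the computation $\Ext^1_B(P_0,\Sigma R)=\F_2$. The only cosmetic difference is that you localize $M$ first and then argue inside each $\St(B\Mod^{(i)})$, while the paper reads the dichotomy off $H(M,Q_i)$ directly and only invokes the triangle in the doubly-nonzero case; you also sketch a derivation of the $\Ext^1$ group where the paper simply asserts it, but both rest on the same facts.
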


\begin{proof}
We give the proof for $B=\cA(1)$.  The proof for $E(1)$ is similar but easier.

We first note a simple fact:  if $M \tensor M \simeq M$ then
each $H(M,Q_i)$ must be either $0$ or $\F_2$.  This yields four possibilities.

If both are $0$, then $0 \lra M$ is a stable equivalence by Theorem~\ref{stableiso}.

If exactly one $Q_i$-homology group is nonzero, we have the unit in
$\Pic^{(i)}(\cA(1))$, which 
must be either $P_0$ or $\Sigma R$ by
Theorems~\ref{Yustheorem} and \ref{PiczeroA}.

The final possibility is that $H(M,Q_0) = \F_2 = H(M,Q_1)$.  In this case
we tensor $M$ with the triangle
\begin{equation*}
\label{nonsplit}
\Sigma R \lra \F_2 
\lra P_0
\end{equation*}
We get
a triangle
\[
 \LL_0M \lra M
\lra \LL_1M.
\]
By Theorem~\ref{Lsubiproperties}, each
$\LL_i(M)$ is stably idempotent and $Q_i$-local.
By the preceding paragraph, $\LL_0(M) \simeq \Sigma R$ and $\LL_1(M) \simeq P_0$.
It remains to determine the possible extensions ${M}$.

%% Using the resolution in Figure~\ref{resofP} we compute
It is a simple matter to verify that
$ \Ext_{\cA(1)}^{1,0}( P_0, \Sigma R) = \F_2$.
Therefore, the two possibilities are
the split extension $M \simeq P_0 \dirsum \Sigma R$ and
the nonsplit $M \simeq \F_2$  above.
\end{proof}

\section{A final example}
\label{final}

As noted in
\ref{stableisofail}, the detection of stable isomorphisms is more subtle
in the category of all $\cA(1)$-modules:  the module $L = \F_2[x,x^{-1}]$
has trivial $Q_0$ and $Q_1$ homology, yet is not stably free. It provides
another idempotent as well.

\begin{prop}
As $\cA(1)$-modules, 
$\displaystyle{L \tensor L \iso  L 
\dirsum \bigdirsum_{i,j \in \Z} \Sigma^{4i+2j-2} \cA(1)}$. 
\end{prop}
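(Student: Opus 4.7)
The plan is to split off the diagonal copy of $L$ using the $\cA(1)$-algebra structure on $L \tensor L$, and then exhibit the complement as an explicitly generated free module. First, the multiplication $\mu : L \tensor L \lra L$, $x^i \tensor x^j \mapsto x^{i+j}$, is $\cA(1)$-linear by Cartan, and the unit $s : L \lra L \tensor L$, $a \mapsto a \tensor 1$, is an $\cA(1)$-linear section, since $Sq^k(1) = 0$ for $k \geq 1$ forces $Sq^k(a \tensor 1) = Sq^k(a) \tensor 1$. This yields $L \tensor L \iso L \oplus \Ker(\mu)$ as $\cA(1)$-modules, reducing the claim to
\[
\Ker(\mu) \iso \bigdirsum_{(i,j) \in \Z^2} \Sigma^{4i + 2j - 2}\, \cA(1).
\]

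Next I would set up convenient coordinates: writing $L \tensor L = \F_2[x, y, x^{-1}, y^{-1}]$ with $y = 1 \tensor x$, the element $\tau := x + y$ is a non-zero-divisor of degree $1$ with $\mu(\tau) = 0$, so $\Ker(\mu) = (\tau)$ has $\F_2$-basis $\{e_{a, b} := x^a \tensor x^b + x^{a+b} \tensor 1 : a \in \Z,\, b \in \Z \setminus \{0\}\}$. The Cartan formula yields
\begin{align*}
Sq^1 e_{a, b} &= a\, e_{a+1, b} + b\, e_{a, b+1}, \\
Sq^2 e_{a, b} &= \binom{a}{2} e_{a+2, b} + ab\, e_{a+1, b+1} + \binom{b}{2} e_{a, b+2},
\end{align*}
with the convention $e_{a, 0} := 0$. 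My candidate free generators are $\{e_{4i-1,\, 2j-1}\}_{(i, j) \in \Z^2}$, which have the required degrees $4i + 2j - 2$ and whose indices satisfy $4i - 1 \equiv 3 \pmod 4$ and $2j - 1$ odd.

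To show the resulting map $\Phi : \bigdirsum \Sigma^{4i+2j-2} \cA(1) \lra \Ker(\mu)$ is an isomorphism, the main tool will be the $\tau$-adic filtration $\Ker(\mu) = (\tau) \supset (\tau^2) \supset \cdots$, which satisfies $\bigcap_n (\tau^n) = 0$ since $L \tensor L$ is a UFD and $\tau$ is irreducible. Because $Sq^1(\tau^n) = n\,\tau^{n+1}$ and $Sq^2(\tau^n) = \binom{n}{2}\,\tau^{n+2}$ both lie in $(\tau^{n+1})$, multiplication by $\tau^n$ descends to an $\cA(1)$-linear isomorphism $L \iso (\tau^n)/(\tau^{n+1})$ of degree $n$, so the associated graded is $\bigdirsum_{n \geq 1} \Sigma^n L$. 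I would first verify that the classes of the $e_{4i-1, 2j-1}$ form an $\F_2$-basis for the indecomposables $\Ker(\mu)/\cA(1)_{>0}\Ker(\mu)$ in each degree — a parity computation using $\binom{a}{2} \equiv 1 \iff a \equiv 2, 3 \pmod 4$ shows the surviving classes are exactly the $(3,3)$-orbits (in degrees $\equiv 2 \pmod 4$) and the $(1,3)\sim(3,1)$-pairs (in degrees $\equiv 0 \pmod 4$), each family in bijection with $\Z$ — and then confirm that each $e_{4i-1, 2j-1}$ generates a free rank-one $\cA(1)$-summand.

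The hard part will be this last step: the associated graded of $\Ker(\mu)$ is built from copies of the non-free module $L$, whereas the source of $\Phi$ is $\cA(1)$-free, so the non-trivial extensions among the filtration pieces must precisely assemble copies of $L$ into free $\cA(1)$-modules. These extensions are encoded by the Cartan cross terms $n\tau^{n+1}a$ and $\binom{n}{2}\tau^{n+2}a$, and tracking how $Sq^I \cdot e_{4i-1, 2j-1}$ distributes across the filtration — matching it degree by degree against $\cA(1)$-multiples of the shifted generators, and in particular verifying that $Sq^6 \cdot e_{4i-1, 2j-1} \neq 0$ for every $(i, j)$ — is a delicate combinatorial check in the parities of $4i-1$, $2j-1$ and the binomial coefficients that arise.
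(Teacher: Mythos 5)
Your splitting of $L$ off of $L \tensor L$ via the product $\mu$ and the unit section $s(a) = a\tensor 1$ is correct and identifies exactly the same summand as the paper's $\{x^i\tensor x^0 \st i\in\Z\}$; so far the two proofs coincide. The gap is that the heart of the proposition --- that the complement is $\cA(1)$-free on the stated generators --- is set up but never proved. You explicitly defer the ``delicate combinatorial check,'' and that check is the actual content of the statement. Moreover, the route you chose makes it harder than it needs to be: the $\tau$-adic filtration has associated graded $\bigdirsum_{n\ge 1}\Sigma^n L$, built entirely from copies of the decidedly non-free module $L$, so the associated graded by itself says nothing about freeness of $\Ker(\mu)$; the entire question lives in the extensions between filtration quotients, which is exactly what you leave unresolved. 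As written, the proposal establishes the easy half and outlines, without completing, the hard half.

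The paper's (terse) proof points at a more direct and finite verification, which you may find easier to actually finish: a Cartan-formula computation shows that the degree-$6$ top class of $\cA(1)$ acts nontrivially on each $x^{4i-1}\tensor x^{2j-1}$, so each generates a free rank-one $\cA(1)$-submodule, hence a summand since $\cA(1)$ is Frobenius. One then checks, degree by degree using precisely the $\bmod\ 4$ parities of binomial coefficients you already wrote down, that these orbits together with the $x^i\tensor 1$ form an $\F_2$-basis of $L\tensor L$. That argument bypasses the filtration entirely and never has to confront the problem of reassembling non-free graded pieces into free modules.
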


\begin{proof}
The elements $x^{4i-1}\tensor x^{2j-1}$
generate a free submodule, and the
submodule $\{ x^i \tensor x^0 | i \in \Z\}$, which is isomorphic to $L$,
is a complementary submodule.
\end{proof}

Therefore, we have another localization functor 
\[
\LL_\infty(M) =  L \tensor M.
\]

The module $L$ shows that
 $Q_0$ and $Q_1$ homology are insufficient to capture
a more general notion of being $Q_0$ or $Q_1$ local.

\begin{prop}
$\LL_0 L \hequiv 0$ and $\eta_L : L \llra{\hequiv} \LL_1 L$.
\end{prop}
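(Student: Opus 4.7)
The plan is to deduce both claims from the single technical assertion that $R \tensor L$ is $\cA(1)$-free. Granting this, $\LL_0 L = \Sigma R \tensor L$ is free, so $\LL_0 L \hequiv 0$. For the second statement, tensor the sequence $0 \lra \F_2 \lra P_0 \lra R \lra 0$ of Proposition~\ref{key-sequence} with $L$ to obtain $0 \lra L \llra{\eta_L} \LL_1 L \lra R \tensor L \lra 0$, which splits because its quotient is free; explicitly, the multiplication map $\mu \colon P_0 \tensor L \lra L$ is $\cA(1)$-equivariant (since $L$ is an $\cA(1)$-algebra) and satisfies $\mu \circ \eta_L = \mathrm{id}_L$. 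So $\eta_L$ is a stable equivalence onto a summand with free complement.

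The crux is proving $R \tensor L$ is $\cA(1)$-free. The idea is to exhibit it as an ascending union of bounded-below free submodules along split monomorphisms. The key input is that $Sq^1(x^4) = Sq^2(x^4) = 0$, so by the Cartan formula multiplication by $x^4$ is an $\cA(1)$-equivariant endomorphism of $L$. Writing $L_k \subset L$ for the submodule of elements of degree $\geq -1-4k$, multiplication by $x^{-4k}$ gives an $\cA(1)$-isomorphism $\Sigma^{-4k} P_0 \iso L_k$, and $L = \bigcup_{k \geq 0} L_k$. The cokernel of the inclusion $L_{k-1} \hookrightarrow L_k$ is a bounded-below four-dimensional $\cA(1)$-module $C \iso P_0/x^4 P_0$ (suitably shifted), whose structure is explicit.

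Both $R \tensor P_0$ and $R \tensor C$ are bounded-below, and by the K\"unneth formula each has vanishing $Q_0$- and $Q_1$-homology: $H(R,Q_1) = 0$ and $H(P_0,Q_0) = 0$ from Proposition~\ref{key-sequence}, while a direct check shows $H(C,Q_0) = 0$ (both $Sq^1$-cycles $x^0, x^2$ of $C$ are $Sq^1$-boundaries). Theorem~\ref{stableiso} then identifies both as $\cA(1)$-free. Each short exact sequence $0 \lra R \tensor L_{k-1} \lra R \tensor L_k \lra \Sigma^{-4k}(R \tensor C) \lra 0$ therefore splits, and choosing splittings inductively presents $R \tensor L$ as a direct sum of free modules, hence free.

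The main obstacle is precisely this last step: because $L$ is unbounded below, Adams--Margolis cannot be applied to $R \tensor L$ directly---indeed, Remark~\ref{stableisofail} exhibits $L$ itself as a non-free module with vanishing Margolis homology. The $x^4$-periodicity of $L$ is what enables the reduction to bounded-below free pieces, and the K\"unneth calculation for $R \tensor C$ is what ensures that the filtration splits at each stage.
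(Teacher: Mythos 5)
Your argument is correct, and the top-level structure agrees with the paper's: both reduce to showing that $\Sigma R \tensor L$ (equivalently $R\tensor L$) is $\cA(1)$-free, and both then read off the two claims from the triangle $\LL_0 L \to L \to \LL_1 L$ obtained by tensoring $L$ with $\Sigma R \to \F_2 \to P_0$. Where you genuinely diverge is in how freeness of $R\tensor L$ is established. The paper simply exhibits an explicit free $\cA(1)$-basis, namely the elements $x^{4i-1}\tensor x^{2j-1}$ with $i\geq 0$, in the same spirit as its proof that the free part of $L\tensor L$ has basis $x^{4i-1}\tensor x^{2j-1}$. You instead filter $L$ by the bounded-below submodules $L_k\iso\Sigma^{-4k}P_0$ (using that multiplication by $x^{\pm 4}$ is $\cA(1)$-linear), check via K\"unneth and Theorem~\ref{stableiso} that $R\tensor P_0$ and $R\tensor C$ are free, and assemble $R\tensor L$ as an ascending union along split inclusions of free modules. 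Your route is longer but more structural: it isolates exactly where the bounded-below hypothesis of Adams--Margolis is needed and shows how $x^4$-periodicity lets one reduce to that setting, whereas the paper's basis-exhibition requires one to verify freeness by hand on an unbounded module. The explicit retraction $\mu\colon P_0\tensor L\to L$ you supply is a pleasant extra (it shows directly that $\eta_L$ is split monic), though it is not logically needed once $R\tensor L$ is known to be free. One small point worth noting for completeness: in your K\"unneth step you only record $H(C,Q_0)=0$; the $Q_1$-homology of $C$ is in fact nonzero, but this is harmless since $H(R,Q_1)=0$ already kills the $Q_1$-K\"unneth factor, which is presumably what you had in mind.
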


\begin{proof}
$\LL_0 L = \Sigma R \tensor L$ is free over $\cA(1)$ on
the elements $x^{4i-1}\tensor x^{2j-1}$ with $i \geq 0$.
The canonical triangle, $\LL_0 L \lra L \llra{\eta_L} \LL_1 L$
then shows that $L$ is equivalent to its $\LL_1$ localization.
\end{proof}

%% The most optimistic conjecture is that this localization is the only way in
%% which the $Q_0$ and $Q_1$ homologies fail to capture stable equivalence.  A
%% precise form of this is the following.

%% \begin{conj} SILLY
%% If $\LL_0(M)$, $\LL_1(M)$ and $\LL_\infty(M)$ are  free, then $M$ is free.
%% \end{conj}

%% In a sequel to this paper, we plan to analyze this and related issues in
%% the category $\cA(1)\Mod$ of all $\cA(1)$-modules.
%% In this context, we
%% will also be able to consider the behavior of duals, which
%% are stable inverses in the classical Picard groups, but whose behavior for
%% modules like $P_i$, $R$ and $L$, which are not finitely generated, is more subtle.
%% Restricting attention
%% to bounded-below modules has precluded such considerations here.

\appendix

\section{Locating $P_n$ in $P^{\tensor(n)}$}
\label{seclocating}

The `hit problem' 
is the problem of determining a set
of $\cA$-module generators of the polynomial rings
$\F_2[x_1,\ldots,x_n] = H^*B(C_2^n)_{+}$.  
See \cite{AultSinger} for a recent paper on the problem, and
\cite{Ault} for work on the problem using the results we
prove here.
One approach to it is
to consider the analogous problem over subalgebras $\cA(n)$.
The results of section~\ref{PntoPone} simplify the problem
in the case of $\cA(1)$.  Those results only identify the stable type, $P_n$,
of $H^*(BC_2 \smsh \cdots \smsh BC_2)$.  
In this section we will produce
explicit embeddings 
$P_n \lra P^{\tensor(n)}$.  
Naturally, there are choices involved,
but the inductive determination of the isomorphism type also gives us
a way to inductively
find $P_{n+1}$ as a summand of $P_n \tensor P \subset P^{\tensor(n)} \tensor P$,
reducing the work dramatically.

Let us write $x_1^{i_1} \ldots x_n^{i_n}$ as $i_1 \ldots i_n$ and define
$\overline{i_1 \ldots i_n}$ to be the orbit sum of
${i_1 \ldots i_n}$.

\begin{theorem}
\label{location}
For $n>0$,
$P_n$ can be embedded in $P^{\tensor(n)}$ as follows:
\begin{itemize}
\item $M_1 = \langle 1, 2, 4 \rangle$
\item $P_1 = P = M_1 + 
 \langle 3, i \st i \geq 5 \rangle$
\item $M_2 = \langle 11, \overline{12}, 22, \overline{14}, \overline{24} \rangle$
\item $P_2 = M_2 + \langle 21, 4i \st i \geq 1 \rangle$
%% 
%% \item $M_3 = \langle 111, \overline{112},  \overline{122}, 
%% 222, 
%% 222+\overline{114}, 
%% \overline{124},
%% \overline{224} \rangle$
%% \item $P_3= M_3 + \langle
%% 124+142+421,
%% 44i \st i \geq 1 \rangle$
%% 
\item $M_3 = \langle  \overline{112},
222+\overline{114}, 
\overline{124} \rangle$
\item $P_3= M_3 + \langle
111, \overline{122}, 222, \overline{224}
124+142+421,
44i \st i \geq 1 \rangle$

\item $M_4 = \langle 2222 + \overline{1124} \rangle$
\item $P_4 = M_4 + \langle
2221 + \overline{1114},
\overline{122}4 + \overline{124}2,
2224,
\overline{224}i \st i \geq 1 \rangle$
\item $P_{n+4} \iso 
\langle 2222 + \overline{1124} \rangle 
 \tensor 
 P_n$.
\end{itemize}
\end{theorem}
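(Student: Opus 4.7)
The plan is to reduce the claim to one compact calculation: that the element $v := 2222 + \overline{1124} \in P^{\tensor 4}$, of degree $8$, is annihilated by both $Sq^1$ and $Sq^2$. Since $\cA(1)$ is generated as an algebra by these two operations, this will show that the cyclic submodule $\langle v\rangle \subset P^{\tensor 4}$ is isomorphic to $\Sigma^8\F_2$. Granting this, Theorem~\ref{multmain} immediately finishes the proof:
\[
\langle 2222 + \overline{1124}\rangle \tensor P_n
\iso \Sigma^8\F_2 \tensor P_n
\iso \Sigma^8 P_n
\iso P_{n+4},
\]
where the final honest isomorphism (rather than just stable equivalence) uses that $\Sigma^8 P_n$ is reduced.

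The $Sq^1$ verification is straightforward. On $2222 = (x_1x_2x_3x_4)^2$, the Cartan expansion leaves each term containing some factor $Sq^1(x_i^2) = 0$, so $Sq^1(2222) = 0$. On a monomial $x_ax_b x_c^2 x_d^4$ of $\overline{1124}$ (indexed by $a<b$ with $\{a,b,c,d\}=\{1,2,3,4\}$), Cartan leaves only $x_a^2 x_b x_c^2 x_d^4 + x_a x_b^2 x_c^2 x_d^4$, since $Sq^1$ annihilates both $x_c^2$ and $x_d^4$. Summed over the twelve orbit monomials, each target monomial in the orbit $\overline{1224}$ is hit exactly twice---once from raising each of its two exponent-$1$ predecessors---and so vanishes mod~$2$.

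For $Sq^2$, Cartan on $2222$ collapses to the ``single factor receives $Sq^2$'' terms, yielding $Sq^2(2222) = \sum_j x_j^4 \prod_{k\neq j} x_k^2 = \overline{4222}$. On $x_a x_b x_c^2 x_d^4$, only two Cartan terms survive: $x_a x_b x_c^4 x_d^4$ (from $Sq^2$ on $x_c^2$) and $x_a^2 x_b^2 x_c^2 x_d^4$ (from the pair $Sq^1(x_a)\,Sq^1(x_b)$). Collecting over the orbit, the first contribution hits each monomial of $\overline{1144}$ with multiplicity two and so vanishes, while the second hits each monomial of $\overline{2224}$ with multiplicity three---one for each choice of which of its three exponent-$2$ slots was the original $c$ (with $a<b$ forced on the remaining two positions)---giving $\overline{2224} = \overline{4222}$. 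Hence $Sq^2(\overline{1124}) = \overline{4222}$, which cancels $Sq^2(2222)$ in $v$.

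The main obstacle is the combinatorial bookkeeping in the $Sq^2$ step: one must track the asymmetry between the $a<b$ pair of exponent-$1$ slots (indistinguishable in the orbit sum) and the distinguishable exponent-$2$ and exponent-$4$ slots of $\overline{1124}$ in order to obtain the crucial multiplicity-three count, which is precisely what produces the cancellation with $Sq^2(2222)$. Once both $Sq^1 v = 0$ and $Sq^2 v = 0$ are confirmed, $\langle v\rangle \iso \Sigma^8\F_2$ and the stated isomorphism follows as above.
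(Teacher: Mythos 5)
Your computation of $Sq^1 v$ and $Sq^2 v$ for $v = 2222 + \overline{1124}$ is correct, and so is the multiplicity-three bookkeeping that makes $Sq^2(\overline{1124}) = \overline{2224}$ cancel $Sq^2(2222)$. Granting the base cases, your route to the periodicity bullet is also sound and a bit cleaner than the paper's: you go directly from $M_4 \iso \Sigma^8\F_2$ to $M_4 \tensor P_n \iso \Sigma^8 P_n \iso P_{n+4}$ via Theorem~\ref{multmain} (noting both sides are reduced), whereas the paper tensors the short exact sequence $0 \to M_4 \to P_4 \to \Sigma^8 R \to 0$ with $P_n$, shows $\Sigma^8 R \tensor P_n$ is free, and then uses indecomposability of $M_4 \tensor P_n$ to pin it down. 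Both routes rest on the same two facts.

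However, the proof as written has a substantial gap: you treat the theorem as if it consisted only of the final bullet. The statement also asserts that the specific listed generators inside $P^{\tensor(n)}$ for $n=1,2,3,4$ realize $M_1, P_1, M_2, P_2, M_3, P_3, M_4, P_4$, and none of these except $M_4 = \langle v\rangle$ is addressed by your argument. These base cases are not formalities; they are the concrete content of the theorem and the paper establishes them one at a time by direct module-level checks (e.g.\ that $x_1 x_2$ generates $M_2 \subset P\tensor P$, that the degree-$5$ class $x_1 x_2^2 x_3^4 + x_1 x_2^4 x_3^2 + x_1^4 x_2^2 x_3$ maps to $\overline{224}$ under $Sq^1$ in $P_3$, and that there are exactly four solutions to the conditions on the degree-$7$ class in $P_4$). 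In particular, the periodicity bullet extends the embedding $P_n \hookrightarrow P^{\tensor(n)}$ to $P_{n+4}\hookrightarrow P^{\tensor(n+4)}$, so without first constructing the embeddings for $n=1,2,3,4$ there is nothing to bootstrap from. Your "reduction to one compact calculation" discards most of what needs to be proved; the $Sq^1$/$Sq^2$ check on $v$ gives only the $M_4$ and $P_{n+4}$ bullets.
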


\begin{remark}
\label{choiceremark}
There are several notable points about these submodules.
\begin{enumerate}
\item The first three generators
$x_1$, $x_1 x_2$, and $x_1 x_2 x_3$,
are obvious from the connectivity:  the connectivity of $P_n$ is $n$ for $n<4$.
\item \label{choices}
The fourth,
$x_1^2 x_2^2 x_3^2 x_4 + \overline{x_1 x_2 x_3 x_4^4}$ in degree 7, is less so.
The classes of degrees less than 7 all lie in free summands of $P_3 \tensor P$.
Modulo those free summands, 
there are 4 possible choices for the degree 7 class in $P_4$:
\[
2221 + \overline{1114} + \alpha_0(2221+\overline{221}2)
+\alpha_1(1114 + \overline{112}3)
\]
for $\alpha_i \in \{0,1\}$.  
\item Applying $Sq^1$ to any of these four classes yields the same
`periodicity class' $B=2222 + \overline{1124}$ in degree 8.  From $H(P,Q_1) = [x_1^2]$,
we know that $H(P_4,Q_1) = [x_1^2 x_2^2 x_3^2 x_4^2]$, but since 
$Sq^2(x_1^2 x_2^2 x_3^2 x_4^2) \neq 0$, the `periodicity class' must have additional
terms, which turn out to be exactly $Q_0Q_1(x_1x_2x_3x_4)$, or 
$\overline{1124}$ in our abbreviated notation.
\item Above the bottom few degrees, each of the $P_i$ can be written as the tensor
product of an $\cA(1)$-annihilated class with $P$.  These $\cA(1)$-annihilated
classes are $B^i$, $x_1^4B^i$, $x_1^4x_2^4B^i$, and $\overline{x_1^2 x_2^2 x_3^4}B^i$.
\end{enumerate}
\end{remark}

\begin{proof}
Evidently $P_1=P$.
For $P_2$, it is a simple matter to verify that $x_1x_2$ generates $M_2$.
To finish $P_2$, 
clearly $x = x_1^2 x_2$ serves, with the rest of $P_2$  then given by $x_1^4(x_2^i)$.

Expressing $P_3$ as the nontrivial extension of 
$M_3$ by $P_3/M_3 \iso \Sigma^4 R$ requires
that the bottom class of $M_3$ be $Sq^1(x_1x_2x_3) = \overline{112}$.
The bottom $\cA(1)\modmod\cA(0)$ is forced, but for the second one,
we need $x$ with $Sq^1x = \overline{x_1^2 x_2^2 x_3^4}$.  By choosing
$x = x_1 x_2^2 x_3^4 + x_1 x_2^4 x_3^2 + x_1^4 x_2^2 x_3$, the rest of
$P_3$ is given by $x_1^4 x_2^4(x_3^i)$.

For $P_4$, we need a  class in degree 7 in  $P_3 \tensor P$
which is not in $\Image(Sq^1)+\Image(Sq^2)$ and whose annihilator
ideal is $(Sq^2 Sq^1)$.  Solving $Sq^1 x \neq 0$, $Sq^2 Sq^1 x = 0$,
$Sq^2Sq^1Sq^2 x \neq 0$, for $x \notin
\Image(Sq^1)+\Image(Sq^2)$,
we arrive at the 4 choices in Remark~\ref{choiceremark}.(\ref{choices})
above.  Our choice, $\alpha_0 = \alpha_1 = 0$, gives the version of
$P_4/M_4$ which is simplest to describe.

Finally, consider periodicity.  Since $M_4$ is a trivial $\cA(1)$ module,
tensoring with it is the same as 8-fold suspension.
Now, if we tensor the short exact sequence 
$ 0 \lra M_4 \lra P_4 \lra \Sigma^8 R \lra 0$
with $P_n$, we get
\[
0 \lra M_4 \tensor P_n \lra P_4 \tensor P_n \lra \Sigma^8 R \tensor P_n \lra 0.
\]
The K\"unneth theorem and Theorem~\ref{stableiso} imply that $M_4\tensor P_n$
is stably isomorphic to $P_4 \tensor P_n$, and hence to 
$P^{\tensor(4)}\tensor P^{\tensor(n)}$.
Since $M_4 \tensor P_n$ is indecomposable,
it follows that it
is isomorphic to $P_{n+4}$ and that the inclusion 
$M_4 \tensor P_n \subset P_4 \tensor P_n \subset P^{\tensor(4)} \tensor P^{\tensor(n)}$
serves our purpose.
\end{proof}

\noindent
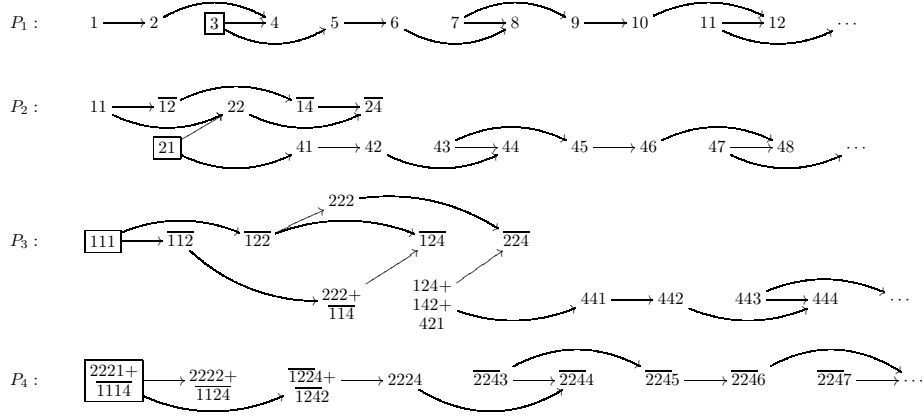
\begin{figure}
\scalebox{.65}{
$
\xymatrix{
P_1: &
1
\ar[r]
& 2
\ar@/^1pc/[rr]
&*+[F]{3}
\ar[r]
\ar@/_1pc/[rr]
& 4
& 5
\ar[r]
& 6
\ar@/_1pc/[rr]
& 7
\ar[r]
\ar@/^1pc/[rr]
& 8
& 9
\ar[r]
&
10
\ar@/^1pc/[rr]
&
11
\ar[r]
\ar@/_1pc/[rr]
&
12
&
\cdots
&&&
\\
\\
}
$
 } % end \scalebox
\\
\scalebox{.65}{
$
\xymatrix@R=9pt{
P_2: &
11
\ar[r]
\ar@/_1pc/[rr]
& \overline{12}
\ar@/^1pc/[rr]
& {22}
\ar@/_1pc/[rr]
& \overline{14}
\ar[r]
& \overline{24}
&
&
&
&
&
&
&
\\
&
&
*+[F]{21}
\ar[ru]
\ar@/_1pc/[rr]
&
& 41
\ar[r]
& 42
\ar@/_1pc/[rr]
& 43
\ar[r]
\ar@/^1pc/[rr]
& 44
& 45
\ar[r]
& 46
\ar@/^1pc/[rr]
& 47
\ar[r]
\ar@/_1pc/[rr]
& 48
&
\cdots
&&
\\
\\
}
$
} % end \scalebox
\\
\scalebox{.65}{
$
\xymatrix@R=9pt{
&&&
& 222
\ar@/^1pc/[rrd]
\\
P_3: &
*+[F]{111}
\ar[r]
\ar@/^1pc/[rr]
&
\overline{112}
\ar@/_1pc/[rrd]
&
\overline{122}
\ar[ru]
\ar@/^1pc/[rr]
&&
\overline{124}
&
\overline{224}
&
\\
&&&& 
\txt{$222+$\\$\overline{114}$}
\ar[ru]
&
{\txt{$124+$ \\ $142+$\\$421$}}
\ar[ru]
\ar@/_1pc/[rr]
&
& 441
\ar[r]
& 442
\ar@/_1pc/[rr]
& 443
\ar[r]
\ar@/^1pc/[rr]
& 444
&
\cdots
&
\\
\\
}
$
} % end \scalebox
\\
\scalebox{.65}{
$
\xymatrix{
P_4: &
*+[F]{\txt{$2221+$\\$\overline{1114}$}}
\ar[r]
\ar@/_1.5pc/[rr]
& \txt{$2222+$\\$\overline{1124}$}
& \txt{$\overline{122}4 +$\\$ \overline{124}2$}
\ar[r]
& 2224
\ar@/_1.5pc/[rr]
& \overline{224}3
\ar[r]
\ar@/^1.5pc/[rr]
& \overline{224}4
& \overline{224}5
\ar[r]
& \overline{224}6
\ar@/^1.5pc/[rr]
& \overline{224}7
\ar[r]
&
\cdots
&
\\
\\
}
$
} %end \scalebox
{\caption {The modules $P_n$
embedded in $P^{\tensor(n)}$, $1 \leq n \leq 4$.
The bottom class of
the quotient $\Sigma^t R$ is boxed.  See Section~\ref{seclocating} for notation.}
\label{Pns}}
\end{figure}

\section{The free summand in $P^{\tensor(n)}$}

We have now shown that  if $n > 0$ then
\[
P^{\tensor n} = P_n \dirsum F_n
\]
where $F_n$ is a free $\cA(1)$-module.
We can therefore give a complete decomposition of 
$P^{\tensor(n)}$
by simply computing
the Hilbert series of the free part.  This can be found in  Yu's thesis 
(\cite[Theorem 4.2]{Yu}).
The most transparent form of the Hilbert series for the $P_n$ can
simply be read off from
Theorem~\ref{main}.

\begin{lemma} $H(P_{4k+i}) = t^{8k} H(P_i)$ and
\begin{itemize}
\item  $H(P_0) =\displaystyle{ \frac{t^{-1}}{1-t}}$\\[1ex]
\item  $H(P_1) = \displaystyle{\frac{t^{1}}{1-t}}$\\[1ex]
\item  $H(P_2) = \displaystyle{\frac{t^{2}}{1-t} + t^3+t^5+t^6}$\\[1ex]
\item  $H(P_3) = \displaystyle{\frac{t^{3}}{1-t} + t^6+t^7}$
\end{itemize}
\end{lemma}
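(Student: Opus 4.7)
The plan is to combine additivity of Hilbert series on short exact sequences with the fundamental sequences of Theorem~\ref{main} and the periodicity of Theorem~\ref{multmain}. First, Theorem~\ref{multmain} gives $P_{n+4} \iso \Sigma^8 P_n$ (an honest isomorphism, since both sides are reduced), and since suspension by $s$ multiplies the Hilbert series by $t^s$, this yields the periodicity $H(P_{4k+i}) = t^{8k} H(P_i)$ at once. It therefore suffices to compute $H(P_i)$ for $i \in \{0,1,2,3\}$.

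For $P_0$ and $P_1$ the Hilbert series can simply be read off the definitions: $P_0$ has one basis element in each degree $\geq -1$, so $H(P_0) = t^{-1}/(1-t)$, and $P_1 = (x) \subset \F_2[x]$ has one in each degree $\geq 1$, so $H(P_1) = t/(1-t)$. Since $R = P_0/\F_2$, we get $H(R) = t^{-1} + t/(1-t)$, and hence $H(\Sigma^4 R) = t^3 + t^5/(1-t)$.

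For $P_2$ and $P_3$, Theorem~\ref{main} supplies short exact sequences
\[
0 \lra M_n \lra P_n \lra \Sigma^4 R \lra 0 \qquad (n=2,3),
\]
and additivity of Hilbert series then gives $H(P_n) = H(M_n) + t^4 H(R)$. The explicit descriptions of $M_2$ and $M_3$ in Proposition~\ref{descrMi}, conveniently displayed as the open dots in Figure~\ref{modules}, yield $H(M_2) = t^2+t^3+t^4+t^5+t^6$ and $H(M_3) = t^4+t^6+t^7$ by direct inspection. Substituting and collecting terms produces the claimed expressions.

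The only mildly subtle point is verifying the dimensions of the $M_n$ in each degree. Unwinding the presentations $M_2 = \Sigma^2 \cA(1)/(Sq^3)$ and $M_3 = \Sigma^4 \cA(1)/(Sq^1, Sq^2 Sq^3)$ directly would require some bookkeeping with Adem relations, but reading the generators off Figure~\ref{modules}, where the $M_n$ sit visibly as submodules of $P_n$, bypasses this entirely — which is presumably what the author means by the Hilbert series being ``simply read off from Theorem~\ref{main}.''
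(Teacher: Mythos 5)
Your proof is correct and follows exactly the route the paper intends: the paper's text preceding the Lemma states that these Hilbert series ``can simply be read off from Theorem~\ref{main}'', and you do precisely that, using additivity over the short exact sequences $0 \to M_n \to P_n \to \Sigma^4 R \to 0$, the explicit $M_n$ from Proposition~\ref{descrMi}/Figure~\ref{modules}, and the periodicity $P_{n+4} \iso \Sigma^8 P_n$ from Theorem~\ref{multmain}. The arithmetic checks out in each case.
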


Another form works a bit better in connection with the Hilbert series for $P^{\tensor(n)}$.

\begin{lemma}
The Hilbert series 
\[
H(P_n) = \frac{t^{2n}}{1-t}Q_n(t)
\]
where
\[
Q_n(t) =\left\{
 \begin{array}{ll}
 {\displaystyle{\frac{1}{t}}} & n \congruent 0,1 \pmod{4} \\[2ex]
 {\displaystyle{\frac{1+t-t^2+t^3-t^5}{t^2}}} & n \congruent 2 \pmod{4} \\[2ex]
 {\displaystyle{\frac{1+t^3-t^5}{t^3}}} & n \congruent 3 \pmod{4} \\
 \end{array}\right.
\]
\end{lemma}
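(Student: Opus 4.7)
The plan is to deduce the stated form from the previous lemma by a direct algebraic manipulation, handled residue class by residue class mod $4$. The key observation is that both the previous lemma and the one to be proved have the same periodicity $H(P_{n+4}) = t^8 H(P_n)$, which is visible on the right-hand side of the target since $t^{2(n+4)}/t^{2n} = t^8$ and $Q_{n+4}(t) = Q_n(t)$ as a function of $n \bmod 4$. So it suffices to verify the four cases $n \in \{0,1,2,3\}$.

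For $n \equiv 0,1 \pmod{4}$ the computation is trivial: $H(P_0) = t^{-1}/(1-t) = (t^0/(1-t)) \cdot t^{-1}$, matching $\frac{t^{2n}}{1-t} \cdot \frac{1}{t}$ at $n=0$; similarly $H(P_1) = t/(1-t)$ matches $\frac{t^2}{1-t}\cdot \frac{1}{t}$ at $n=1$. For $n \equiv 2,3 \pmod 4$ I would clear denominators on the expression
\[
H(P_n) \;=\; \frac{t^n}{1-t} + (\text{polynomial part})
\]
by multiplying the polynomial part by $(1-t)/(1-t)$ and adding. At $n=2$, combining $t^2/(1-t) + t^3+t^5+t^6$ gives numerator $t^2+t^3-t^4+t^5-t^7 = t^2(1+t-t^2+t^3-t^5)$, which equals $\frac{t^{4}}{1-t}\cdot \frac{1+t-t^2+t^3-t^5}{t^2}$, the claimed form. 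At $n=3$, combining $t^3/(1-t) + t^6 + t^7$ yields numerator $t^3 + t^6 - t^8 = t^3(1+t^3-t^5)$, and this equals $\frac{t^{6}}{1-t}\cdot \frac{1+t^3-t^5}{t^3}$.

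There is really no obstacle here; the content was all in the previous lemma (which in turn rested on Theorem~\ref{main} and the periodicity $P_{n+4}\iso \Sigma^8 P_n$ of Theorem~\ref{multmain}). The only thing to be careful about is bookkeeping on powers of $t$ and making sure the $Q_n(t)$ formula as displayed with $t$ in the denominator is interpreted correctly: it is a Laurent polynomial factor chosen so that the leading monomial $t^{2n}/(1-t)$ has the correct connectivity shift for the four-fold periodic pattern, not a true rational function in lowest terms. With that understood, the lemma is simply a restatement of the previous one in a form better suited to comparing with the Hilbert series of $P^{\tensor(n)} = \F_2[x_1,\ldots,x_n]/(1)$, where the denominator $(1-t)^n$ suggests factoring a $(1-t)^{-1}$ out of $H(P_n)$ before subtracting.
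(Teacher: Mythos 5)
Your proof is correct and matches what the paper means by ``Straightforward'': you reduce to $n\in\{0,1,2,3\}$ by the eight-fold suspension periodicity $P_{n+4}\iso\Sigma^8 P_n$ and then verify each case by clearing the $(1-t)$ denominator against the polynomial tail from the previous lemma, and your power-of-$t$ bookkeeping checks out in all four cases.
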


\begin{proof}
Straightforward.
\end{proof}

We can now locate the summands in  the free parts  $F_n$.  

\begin{theorem}
\label{freepart}
The Hilbert series of the modules $F_n$ are
\[
H(F_n) = H(\cA(1)) \frac{t^n(1-t^n(1-t)^{n-1}Q_n(t))}
			{(1-t)^{n-1}(1-t^4)(1+t^3)}
\]
\end{theorem}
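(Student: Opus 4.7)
The plan is that this is essentially a bookkeeping computation: once the splitting $P^{\tensor n} = P_n \dirsum F_n$ from Section~\ref{seclocating} is in hand, $F_n$ is determined by its Hilbert series, and that series is simply $H(P^{\tensor n}) - H(P_n)$.

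First I would record the two input Hilbert series. Since $H(P) = t/(1-t)$, we have $H(P^{\tensor n}) = t^n/(1-t)^n$, and the preceding lemma gives $H(P_n) = t^{2n} Q_n(t)/(1-t)$. Subtracting and clearing a common denominator,
\[
H(F_n) \;=\; \frac{t^n}{(1-t)^n} \;-\; \frac{t^{2n} Q_n(t)}{1-t}
       \;=\; \frac{t^n\bigl(1 - t^n (1-t)^{n-1} Q_n(t)\bigr)}{(1-t)^n}.
\]

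Next I would use the standard factorization of the Poincar\'e series of $\cA(1)$. Since $\cA(1)$ has dimensions $1,1,1,2,1,1,1$ in degrees $0$ through $6$, its Hilbert series is $(1+t)(1+t^2)(1+t^3)$, which rewrites as $H(\cA(1)) = (1-t^4)(1+t^3)/(1-t)$. Multiplying the displayed expression by $1 = H(\cA(1)) \cdot (1-t)/\bigl((1-t^4)(1+t^3)\bigr)$ and absorbing one power of $(1-t)$ into the denominator $(1-t)^n$ gives exactly the claimed formula
\[
H(F_n) \;=\; H(\cA(1))\,\frac{t^n\bigl(1 - t^n (1-t)^{n-1} Q_n(t)\bigr)}{(1-t)^{n-1}(1-t^4)(1+t^3)}.
\]

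There is no real obstacle here: the only conceptual ingredients are (i) the splitting $P^{\tensor n} \iso P_n \dirsum F_n$ with $F_n$ free, which is given by Theorems~\ref{firstgeneral} and \ref{location}, and (ii) the freeness of $F_n$, which automatically guarantees that the second factor above is the Hilbert series of an $\F_2$-vector space (i.e., has non-negative integer coefficients), providing an internal consistency check but not needing separate verification. The computation proceeds the same way in all four residue classes of $n$ modulo $4$ because the residue-dependence has already been packaged into $Q_n(t)$ by the preceding lemma.
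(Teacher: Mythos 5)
Your proof is correct and takes essentially the same approach as the paper: both compute $H(F_n) = H(P^{\tensor n}) - H(P_n)$ from the splitting and the Hilbert series $H(P)=t/(1-t)$, $H(P_n)=t^{2n}Q_n(t)/(1-t)$, then simplify using the factorization $H(\cA(1))=(1+t)(1+t^2)(1+t^3)=(1-t^4)(1+t^3)/(1-t)$. The only difference is cosmetic — the paper divides by $H(\cA(1))$ at the start, while you multiply by it at the end.
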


\begin{proof}
We simply compute 
\begin{eqnarray*}
\frac{H(P^{\tensor n}) - H(P_n)}{H(\cA(1))}
 & = & \frac{\left({\displaystyle{\frac{t}{1-t}}}\right)^n - 
	    {\displaystyle{\frac{t^{2n}}{1-t}}}Q_n}
      {(1+t)(1+t^2)(1+t^3)} \\[2ex]
& = & \frac{t^n - t^{2n}(1-t)^{n-1}Q_n(t)}
           {(1-t)^n(1+t)(1+t^2)(1+t^3)} \\[2ex]
& = & \frac{t^n(1 - t^{n}(1-t)^{n-1}Q_n(t))}
           {(1-t)^{n-1}(1-t^4)(1+t^3)} \\
\end{eqnarray*}
\end{proof}

%% When $n=1$ this is $0$, of course.  When $n>1$, the denominator can
%% be rewritten as $(1-t^2)(1-t^4)(1-t)^{n-2}(1-t+t^2)$. This allows us
%% to write $H(F_n)$ as a multiple of
%% \[
%% A  :=  H(\cA(1) \tensor \F_2[u_2,v_4]) 
%% =   \frac{(1+t)(1+t^2)(1+t^3)}{(1-t^2)(1-t^4)} 
%% =  \frac{1-t+t^2}{(1-t)^2}
%% \]
%% where $u_2$ and $v_4$ have degrees 2 and 4 respectively.
%% However, thus far, this process has not proven very informative.  

The following special cases are of particular interest, and are the correct replacement
for  Lemma 2 in \cite{Ossa}, where the free part of $P \tensor P$ is asserted to be
$ \cA(1) \tensor \Sigma^2   \F_2[u_2,v_4].$

\begin{corollary} As   $\cA(1)$-modules
\label{PP}
\[
P \tensor P_0 \iso P \dirsum \left( \cA(1) \tensor \F_2[u_2,v_4]\right)
\]
and
\[
P \tensor P \iso  P_2
    \dirsum \bigdirsum_{\substack{i,j \geq 0\\i+j > 0}} \Sigma^{4i+4j} \cA(1)
    \dirsum \bigdirsum_{i,j \geq 0} \Sigma^{4i+4j+6} \cA(1)
\]
\end{corollary}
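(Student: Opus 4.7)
The plan is to combine the stable equivalences of Section~\ref{PntoPone} with Hilbert-series bookkeeping to pin down the free complements. The recurring key point is that Proposition~\ref{reduced} decomposes any $\cA(1)$-module uniquely (up to isomorphism) as a free part plus a reduced part, and a free $\cA(1)$-module is $\cA(1)$ tensored with a graded $\F_2$-vector space, hence is determined up to isomorphism by its Hilbert series.

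For the first isomorphism, $P = P_1$ is $Q_1$-local by Proposition~\ref{key-sequence}, so Theorem~\ref{firstgeneral} applied to $M = P$ makes $\eta \tensor 1 : P \to P_0 \tensor P$ a stable equivalence. By Proposition~\ref{stableisored}(1), $(P_0\tensor P)^{\red} \iso P^{\red} = P$, so Proposition~\ref{reduced} gives $P_0\tensor P \iso P \dirsum F$ with $F$ free. A direct computation yields
\[
H(F) = H(P_0)H(P) - H(P) = \frac{1}{(1-t)^2} - \frac{t}{1-t} = \frac{1-t+t^2}{(1-t)^2};
\]
using $H(\cA(1)) = (1+t)(1+t^2)(1+t^3)$ and the identity $1-t+t^2 = (1+t^3)/(1+t)$, the Hilbert series of the space of generators of $F$ is
\[
\frac{H(F)}{H(\cA(1))} = \frac{1}{(1-t)^2(1+t)^2(1+t^2)} = \frac{1}{(1-t^2)(1-t^4)} = H(\F_2[u_2,v_4]).
\]
Hence $F \iso \cA(1) \tensor \F_2[u_2, v_4]$, as claimed.

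For the second isomorphism, Theorem~\ref{multmain} identifies $(P\tensor P)^{\red} = (P_1\tensor P_1)^{\red} = P_2$, so Proposition~\ref{reduced} gives $P\tensor P \iso P_2 \dirsum F'$ with $F'$ free. Using the explicit $H(P_2) = t^2/(1-t) + t^3 + t^5 + t^6$ from the Hilbert-series lemma together with $H(P\tensor P) = t^2/(1-t)^2$, one obtains $H(F') = t^3/(1-t)^2 - t^3 - t^5 - t^6$. The claimed free summand has Hilbert series $H(\cA(1))\left((1-t^4)^{-2} - 1 + t^6(1-t^4)^{-2}\right)$, and a routine algebraic check shows these two expressions agree. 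Since free $\cA(1)$-modules are classified by their Hilbert series, the claimed decomposition follows. The substantive input is Theorems~\ref{firstgeneral} and~\ref{multmain}, which identify the reduced summands and force the complements to be free; the remaining work is straightforward Hilbert-series bookkeeping, so no significant obstacle arises.
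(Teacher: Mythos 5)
Your proof is correct and follows essentially the same approach the paper intends: identify the reduced summand via the stable equivalences of Section~\ref{PntoPone} (namely Theorem~\ref{firstgeneral} for $P_0\tensor P$ and Theorem~\ref{multmain} for $P\tensor P$), invoke Proposition~\ref{reduced} to conclude the complement is free, and then pin down that free module by Hilbert-series bookkeeping, noting that a bounded-below free graded $\cA(1)$-module is determined by its Hilbert series. The paper derives the second decomposition as a specialization of the general formula for $H(F_n)$ in Theorem~\ref{freepart}, which amounts to the same Hilbert-series calculation you carried out directly; both routes are consistent, and your verification that the two expressions for $H(F')$ agree checks out.
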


\begin{remark}
$P_0$ is the cohomology of $T(-\lambda)$, the Thom complex of the negative of the
line bundle over $P= BC_2$.  As a consequence, the first isomorphism in 
Corollary~\ref{PP} can be used to give a homotopy equivalence
\[
ko \smsh  BC_2 \smsh T(-\lambda) \hequiv (ko \smsh BC_2) \vee H\F_2[u_2,v_4]
\]
\end{remark}

\bibliographystyle{amsplain}

\end{document}